\documentclass{article}
\usepackage{graphicx}
\usepackage{amsmath}
\usepackage{amsfonts}
\usepackage{amssymb}
\usepackage{amsthm}
\usepackage{mathtools}
\usepackage{enumerate}
\usepackage{lscape}
\usepackage{longtable}
\usepackage{rotating}
\usepackage{multirow}
\usepackage{url}
\usepackage{subfigure}
\usepackage{rotating}
\usepackage{comment}
\usepackage[utf8]{inputenc}
\usepackage{amssymb}
\usepackage{amsfonts}
\usepackage{tikz}
\usepackage{natbib} 

\usepackage{color}
\usepackage{hyperref}

\usepackage{algorithm}
\usepackage{algpseudocode}
\def\fct#1{\mathop{\rm #1}}	

\def\Argmin {\fct{argmin}}
\def\spc{~~~}
\parindent=0pt
\openup 2pt
\parskip 2ex plus 1pt minus 1pt

\newtheorem{theorem}{Theorem}[section]

\theoremstyle{definition}

\newcommand{\R}{\mathbb{R}}

\title{Towards Real Time Control of Water Engineering with Nonlinear Hyperbolic Partial Differential Equations}
\author{Fabio Difonzo, Michael Holst, Morteza Kimiaei, \\ Vyacheslav Kungurtsev,  Songqiang Qiu}
\date{December 2025}

\begin{document}
\maketitle

\begin{abstract}
This paper examines aspirational requirements for software addressing mixed-integer optimization problems constrained by the nonlinear Shallow Water partial differential equations (PDEs), motivated by applications such as river-flow management in hydropower cascades. Realistic deployment of such software would require the simultaneous treatment of nonlinear and potentially non-smooth PDE dynamics, limited theoretical guarantees on the existence and regularity of control-to-state mappings under varying boundary conditions, and computational performance compatible with operational decision-making. In addition, practical settings motivate consideration of uncertainty arising from forecasts of demand, inflows, and environmental conditions.

At present, the theoretical foundations, numerical optimization methods, and large-scale scientific computing tools required to address these challenges in a unified and tractable manner remain the subject of ongoing research across the associated research communities. Rather than proposing a complete solution, this work uses the problem as a case study to identify and organize the mathematical, algorithmic, and computational components that would be necessary for its realization. The resulting framework highlights open challenges and intermediate research directions, and may inform both more circumscribed related problems and the design of future large-scale collaborative efforts aimed at addressing such objectives.
\end{abstract}

\tableofcontents
\begin{sloppypar}

\section{Introduction}

Consider that one wishes to control a system defined by a nonlinear time-dependent Partial Differential Equation (PDE). Moreover, the control decisions include both continuous and discrete variables; the presence of discrete variables induces a nonconvex feasible set and hence additional nonconvexity in the optimization problem. In addition, the control is intended to be performed in real-time, that is, with control decisions algorithmically computed within a short time frame. Finally, some quantities that influence the dynamics or the desired output of the system are uncertain. These uncertainties have probabilistic forecasts, and decisions must be made with this uncertainty in mind, whether by targeting an expectation of the desired outcome and/or a quantity representing risk aversion. 

Such a problem arises in a number of applications. A prominent example is in hydrology, as in the operation of a dam or network of dams in service of generating hydropower to meet energy demand, which can subside or surge in real time. So far, nonlinear control algorithms have been developed applying a nonlinear hydropower production function (e.g.,~\cite{hamann2015coordinated}) as well as empirical studies on, e.g., efficiency~\cite{zheng2013evaluation}. However, being able to successfully incorporate accurate models of the river flow, together with important physical (e.g., environmental) constraints, can facilitate a more realistic model and thus more simultaneously efficient and reliable operation. 

One can also consider groundwater for the irrigation of a set of plots of land, with some growing agricultural produce and others being pasture for a range of livestock. By considering the porosity of the soil and modeling the water source input from precipitation and river flow at different times of the year, one can aim to direct it towards robust storage for possible drought periods, as well as the ideal irrigation for maximizing agricultural yield (see, e.g., \cite{Berardi_Difonzo_Guglielmi_2023,AllaBerardiSaluzzi2025}).

These two examples will be the focus of this paper, with particular attention devoted to the first. However, there are many other circumstances and domains in which this problem can occur. For instance, in automated manufacturing and process control, a number of operations can be modeled with evolution PDEs. For instance, a chemical refinery plant could perform processing that can be modeled thermodynamically with a heat equation plus an additional nonlinear dissipative term.

In general, this is an intractable problem. That is, the combination of a nonlinear infinite-dimensional system, nonconvexities in the decisions, uncertainty, and fast computation for real-time solution is a set of requirements that far exceed the capability of state-of-the-art methods and software. Whereas an aspirational volume on real-time PDE-constrained optimization methods has appeared even in the early volume~\cite{biegler2007real}, in general, there are very few works in the literature with real implementation or even realistic simulations under the defined operational requirements. For a rare exception, see, e.g.,~\cite{behrens2014real}, wherein while the system is multi-component, Newton iterations are numerically reliable, facilitating real-time computation, although still uncertainty and combinatorial decisions are not considered in that work. Other recent attempts at real-time control of systems governed by PDEs incorporate neural network models (e.g.,~\cite{wang2021fast}); however, these often fail to generalize, and their fundamental black box nature and stochasticity present unacceptable hazards for any mission-critical system. 

However, while both methods and software are a significant distance from being available to adequately solve nonlinear time-dependent PDE mixed-integer optimization, this paper presents a framework and overall methodology whereby this problem can be approached over the long run by researchers in the field. In particular, through a properly motivated and reasonable incorporation of existing tools, incremental extensions of available methods and analysis, together with a few larger research and development efforts to circumnavigate certain fundamental obstacles, one can attempt to tackle such a control problem. 

This paper presents the following contributions in the direction of solving this problem. While it is not authoritatively comprehensive due to the significant research and computational developments necessary to realize the entire proposed operation, this manuscript can form the basis of a large, potentially multi-institutional formal project or serve as a template by which decentralized advances can yield, in the long run, a workable solution. 
\begin{itemize}
    \item We present a theoretical analysis of the PDE under consideration, in particular the appropriate function space to consider solutions. A novel local existence theory is presented for the nonlinear nonconservative form of the shallow water equations. The required challenges, as far as the lack of a global unique solution and a dual space amenable to computation, are defined, motivating the procedures in the rest of the paper.
    \item A novel three-tier computational architecture is developed. This includes an offline layer for exploiting HPC and machine learning, a meso layer for performing classical Approximate Dynamic Programming and Nonlinear-Optimization-based Model Predictive Control, and a real-time layer for online operation, together with desiderata and protocols for their integration.
    \item The details of offline layer are presented, including a novel use of machine learning tools to find a region of the control-to-state map on which local existence and regularity permits numerical optimization.
    \item The details of a meso layer that combines particle evolution strategies together with a novel Primal (due to the aforementioned dual space challenges) Sequential Quadratic Programming with Quadratic Constraints Algorithm for performing Nonlinear Model Predictive Control
    \item The appropriate tools for tractable and reliable real-time operation are proposed, with reduced order modeling used to define tractable optimization problems for online operation, with feedback at reasonable latency across the layers to enable maximal instrumental utility of the more complex computations at the meso and offline layers.
\end{itemize}

The computational systems architecture presented is not only multilevel to account for limitations in real-time hardware latency, but the motivating application of dam networks naturally involves multiple, well-separated operational time scales. Long-term planning decisions, such as reservoir allocation and policy design, are typically performed over horizons of days to weeks. Intermediate supervisory adjustments, incorporating updated inflow forecasts and system measurements, occur on the scale of hours, for example through rolling-horizon updates every 1–6 hours. At the fastest level, hydraulic control actions such as gate movements and flow regulation must respond to system dynamics on time scales ranging from seconds to minutes. This separation of time scales directly motivates the three-layer computational architecture developed in this work, consisting of offline, mesoscopic, and real-time components.

\section{Formal Problem Statement and Assumptions}

To begin with, the problem is stated without considering noise and uncertainty, i.e., deterministic PDE-constrained optimization. 

Formally,
\begin{equation}\label{eq:pdeopt}
\begin{array}{rl}
\min\limits_{u,v,z} & \int_{t=0}^T \left(-\Pi(u,d)+C(v,z)\right) dt\\
\text{s.t. } & e(u(x,t)) = f(v(t),z(t)),\\ 
& v\in\mathcal{V},\\
& S(u)\ge 0.
\end{array}
\end{equation}
This defines a PDE-constrained optimization problem for a process $u$ unrolling over time $t$ and space $x$. In the standing example of interest, the process has multiple vector fields, the water level and the flow rate, however for simplicity we present the coupled PDE with $e(u(x,t)) = f(v(t),z(t))$, where $v(t)\in \mathbb{R}^{n_c}$ is a continuous finite dimensional control decision variable (e.g., the aperture of a turbine) and $z(t)\in\{0,1\}^{n_b}$, e.g., the opening and closing of a dam. 
In practice, the influence of the decision variables on the state arises from the boundary conditions, the details of which are presented below. 

The functional $\Pi$ is an Economic Objective, typically the profit. Economic and more classical tracking objective functionals present important distinctions for Model Predictive Control, with stability guarantees requiring distinct problem formulations and analysis. A tracking objective, i.e., $\int \|u-u_T\|^2  dt$, for an off-line computed setpoint $u_T$ is strongly convex, presenting a natural Lyapunov function facilitating stability, while Economic objective functionals do not. 

The cost function $C$ for the decisions $v$ and $z$ represents real costs of operating the water engineering, and the function also acts as a regularizer. Here, there is a fortunate correspondence between economic cost and regularization: both the total time and effort in operation, that is, the time-measure of an open dam and its wide opening, as well as its time rate of change, that is, the process of opening and closing, are penalized. The sum of the total norm of the two together is effectively Sobolev-like. 

The constraint $\mathcal{V}$ is typically simple, e.g., lower and upper bounds on the components, for the continuous decisions. Fortunately, it is both simultaneously natural and physical, while also advantageous numerically, for the continuous decisions to be constrained to a compact set, e.g., $0\le v\le \bar{v}$ for (almost) every $x$. Operationally, there are clear limits present for any driving force, while compactness facilitates the lower semicontinuity properties that are amenable to iterative procedures asymptotically converging to a minimum.

Finally, $S(u)$ is a \emph{state constraint} that can require that the solution of the water flow exhibits certain physical requirements. This can include a) physical realizability, e.g., a nonnegative water height level, b) safety, e.g., non-flooding velocity, and c) externally mandated operational requirements, e.g., environmental constraints ensuring sufficient flow for fish migration.
The state constraints present additional challenges as far as regularity; the dual for such a constraint is a Borel measure. In the sequel, the suggested strategy for the state constraint is to add a barrier term in the objective penalizing proximity of $S(u)$ to its bound. In practice, this would ensure a more conservative constraint adherence (that is, at a greater distance from the bound) than is necessary, which is reasonable for mission-critical systems.

\subsection{River Flow Modeling}

The shallow-water (Saint--Venant) family of systems considered in this work may be regarded as a hierarchy of reduced fluid models whose fidelity and numerical requirements vary with the physical processes retained \cite{LeVeque_2002,toro2009}. For the purposes of control and real-time optimization, it is useful to highlight the modeling choices that most directly affect numerical solvability, robustness, and cost. At the lowest complexity, the one-dimensional depth-averaged Saint--Venant (conservative) formulation provides a hyperbolic balance law for cross-section area (or depth) and discharge. Two-dimensional depth-integrated shallow-water equations extend this model to account for lateral velocity components and nontrivial bathymetry. When dispersion or non-hydrostatic pressure becomes important (for instance, during runup or for short waves interacting with bathymetry), non-hydrostatic or multilayer extensions should be considered and treated with corresponding numerical care. See \cite{Bear_Verruijt} for more details.

Practical river and dam models include topographic source terms (bed slope), frictional source terms (e.g., Manning or Darcy--Weisbach), and other nonconservative products (for example, parameterized exchange or porosity terms). Such source terms can be stiff and are often spatially nonsmooth (for example, step changes in bed elevation or wetting/drying fronts). These characteristics require numerical methods that carefully balance flux gradients against source terms and that preserve physically meaningful invariants such as mass and nonnegativity of the depth.

Robust numerical solvers for hydrological and river Computational Fluid Dynamics (hereafter, CFD) that are suitable for control integration generally satisfy three interrelated properties. First, they are well-balanced: they preserve steady states in which flux gradients are exactly balanced by source terms (for example, the lake-at-rest state), which prevents spurious waves and allows accurate long-time integration around controller operating points \cite{audusse2004}. Second, they are positivity-preserving and handle wetting/drying in a stable manner so that water depth remains nonnegative even during transitions between inundated and dry states \cite{kurganov2007}. Third, they treat friction and other stiff sources in a stable way; common remedies include semi-implicit source discretizations or operator splitting with local implicit solves when tight stability constraints appear.

Finite volume (conservative) discretizations on structured or unstructured meshes are the dominant choice for hyperbolic shallow-water problems, since they provide local conservation, shock-capturing, and natural coupling with Riemann solver-based fluxes (e.g., see \cite{LeVeque_2002,toro2009}). 
High-order finite-volume or discontinuous Galerkin methods are used where higher accuracy is required in smooth regions, but they demand stronger limiters and increase computational cost. Mixed finite-element and stabilized continuous-Galerkin formulations are also employed in some operational hydrodynamic models because of their flexibility on complex geometries.

Hydrostatic reconstruction techniques and positivity-preserving limiters provide a robust way to treat bed-slope source terms and wetting/drying indicators of the form $H(\eta,z_b)=\max\{0,z_b+\eta\}$ (e.g., see \cite{BrugnanoCasulli2008,BrugnanoCasulli2009} and Section \ref{sec:pwLinearApproxSWE} below). Particular care is required at thin water layers near dry states: specialized reconstructions or local mesh refinement help avoid oscillations and preserve mass and nonnegativity \cite{kurganov2007}.

When primitive (nonconservative) formulations are used, the numerical flux must be combined with a consistent discretization of nonconservative products; path-conservative schemes provide a rigorous framework for this purpose \cite{pares2006,castro2019}. A careful treatment of these terms is important for accuracy and for ensuring that discrete adjoint and linearized models (used by control and optimization routines) are consistent with the forward solver.

For gradient-based optimization and real-time Newton/SQP solvers, it is necessary to obtain consistent linearizations of the discrete forward solver or to use algorithmic differentiation of the numerical method to produce accurate sensitivities.

\paragraph{Saint Venant System}
Consider the Saint Venant system, which depth averages the Shallow Water Equations to produce a one-dimensional nonlinear hyperbolic PDE, as follows
\begin{equation}\label{eq:saintv}
    \begin{array}{l}
    \frac{\partial A}{\partial t}+\frac{\partial Q}{\partial x} = 0,\\
    \frac{\partial Q}{\partial t} +\frac{\partial(Q^2/A+g A^2/2)}{\partial x}=gA(S_o-S_f),\text{ with,} \\
    S_o := -\frac{\partial z_b}{\partial x} \\
    S_f := \frac{n_M^2 Q |Q|}{A^{10/3}} ,\\
    A(x,0)=A_0(x),\,Q(x,0)=Q_0(x).
    \end{array}
\end{equation}
Above, the function $A(x,t)$ denotes the cross-sectional area of the flow, which in one-dimensional shallow-water modeling is typically interpreted as a proxy for water depth under suitable geometric assumptions; the quantity $Q(x,t)$ represents the discharge, i.e., the volumetric flow rate; the constant $g$ denotes the gravitational acceleration; the bed elevation is given by $z_b(x)$; the friction term $S_f$ is modeled using Manning's empirical law, where $n_M$ denotes the Manning roughness coefficient; finally, $A_0(x)$ and $Q_0(x)$ specify the initial conditions for the cross-sectional area and discharge, respectively.

This system is an empirical approximation that is frequently used for modeling and simulations. It appears in~\cite{castro2019shallow}

Observe that the right-hand side has the nonlinear operator $-gAS_f=-\frac{gn^2_M Q\vert Q\vert}{A^{7/3}}$. This presents significant challenges in deriving any weak form existence result, as any Sobolev space for $A$ and $Q$ will not be closed under these nonlinear operations.

\paragraph{Two-Dimensional Shallow Water System}
We present an alternative formulation of the Shallow Water equations, extending the consideration to two spatial dimensions. This presentation exhibits more non-smooth maximum operations but fewer degenerate exponents. Consider:
\begin{equation}\label{eq:swe2d}
    \begin{array}{l}
    \frac{\partial u}{\partial t}+u \frac{\partial u}{\partial x}+g\frac{\partial \eta}{\partial x}=\frac{1}{H(\eta,z_b)}\left(\nu H(\eta,z_b) \frac{\partial u}{\partial x}\right)_x+\frac{\gamma_T u_A-\gamma u}{H(\eta,z_b)} \\
    \frac{\partial u}{\partial t}+u \frac{\partial u}{\partial y}+g\frac{\partial \eta}{\partial y}=\frac{1}{H(\eta,z_b)}\left(\nu H(\eta,z_b) \frac{\partial u}{\partial y}\right)_y+\frac{\gamma_T u_A-\gamma u}{H(\eta,z_b)} \\
    \frac{\partial \eta}{\partial t} + u \frac{\partial \eta}{\partial x}+H(\eta,z_b) \frac{\partial u}{\partial x} = - u \frac{dz_b}{dx} \\
    \frac{\partial \eta}{\partial t} + u \frac{\partial \eta}{\partial y}+H(\eta,z_b) \frac{\partial u}{\partial y} = - u \frac{dz_b}{dy} \\
    H(\eta,z_b) = \max\left\{0,z_b+\eta\right\},
    \end{array}
\end{equation}
where here $u$ is the velocity, $\eta$ represents the height of the water level. 




\subsection{Groundwater flow modeling and control}
In this case, the state constraint represents that under-saturation is undesirable, as it clearly leads to underwatering of agricultural production. Over-saturation is also undesirable as potentially damaging to flora; however, it is highly unlikely in an Arid region. 

Groundwater dynamics in porous media are governed by diffusion-dominated parabolic partial differential equations derived from Darcy’s law and mass conservation \cite{bear1972}. For a domain $\Omega \subset \mathbb{R}^d$ and hydraulic head $h(x,t)$, the classical form is
\[
S_s \frac{\partial h}{\partial t} - \nabla \cdot (K(x)\nabla h) = q(x,t),
\]
where $S_s$ is the specific storage coefficient, $K(x)$ is the hydraulic conductivity tensor, and $q(x,t)$ represents external sources or sinks such as pumping wells and recharge. Depending on the hydrogeological setting, one may consider confined flow (linear, single-valued head), unconfined flow with a free surface, or variably saturated flow described by the nonlinear Richards equation \cite{alt1983,celia1990,helmig1997}:
\[
\frac{\partial \theta(h)}{\partial t} - \nabla \cdot [K(h)\nabla(h+z)] = q(x,t),
\]
where $\theta(h)$ denotes water content and $K(h)$ the unsaturated conductivity. The Richards equation is strongly nonlinear and degenerate parabolic, posing analytical and numerical challenges when saturation fronts move or dry zones appear.

For the linear confined-flow case, existence, uniqueness, and stability follow from standard variational arguments, while the Richards equation requires monotone operator techniques to ensure existence of weak solutions \cite{alt1983}. From a control perspective, the state equation is well-posed for bounded source controls, but the control-to-state map may lose differentiability near saturation or free-surface transitions, complicating gradient-based optimization. Control variables typically represent pumping or injection rates, recharge operations, or boundary fluxes, and must respect physical and environmental constraints. 

Groundwater flow exhibits strong heterogeneity and parameter uncertainty, particularly in the hydraulic conductivity $K(x)$, which may vary over orders of magnitude. This uncertainty motivates stochastic formulations of the control problem \cite{carrera2005}, which are discussed in detail in Section \ref{sec:backgroundSPDEDP} and subsequent sections.

Discretizations typically rely on finite-element or finite-volume schemes that ensure local mass conservation and robustness for heterogeneous $K(x)$ fields. Mixed formulations are often preferred when flux accuracy is critical. For variably saturated flow, nonlinear iterations such as Picard or Newton methods with adaptive time stepping are required to handle the strong nonlinearity in $K(h)$ and $\theta(h)$ \cite{celia1990,delaunay2017}, while monotone schemes are necessary to ensure physically meaningful solutions.


For optimization and feedback design, adjoint equations for the linear case are classical parabolic PDEs, while nonlinear or unsaturated formulations require consistent linearizations of $K(h)$ and $\theta(h)$, achievable through algorithmic differentiation of the discrete solver. Reduced-order models (ROMs) based on Proper Orthogonal Decomposition (POD) or DEIM can accelerate repeated PDE evaluations \cite{willmann2010}.

\subsection{Relevant Background on PDE-Constrained Optimization} 

Here, we review some background on PDE-constrained optimization that is particularly applicable to the problem of interest. We focus on deterministic problems, as formally defined above. In the sequel, we present and review the appropriate literature for problems that incorporate uncertainty.
\subsubsection{On Control of Hyperbolic PDEs}\label{sec:controlHyperbolicPDE}

Optimal control of hyperbolic conservation laws differs fundamentally from the elliptic and parabolic cases, 
primarily because even entropy solutions of hyperbolic PDEs exhibit finite-speed propagation and naturally develop 
discontinuities (shocks, rarefactions) in finite time. Even in the case of smooth data and continuous controls (and considering relaxations or switching interpretations of the discrete controls $z$), the mapping $(v,z)\mapsto u$ is not classically differentiable: the dominant effect of a control perturbation is frequently a 
shift of shock locations rather than a smooth variation of the state. Hence, the standard control-to-state 
derivative fails, and sensitivity analysis must be performed in weak or measure-valued topologies. This distinction underlies many of the algorithmic and analytical challenges discussed throughout Section~2, and we therefore focus here on the features that are specific to hyperbolic and parabolic-hyperbolic dynamics.

A general controlled hyperbolic conservation law in conservative form reads
\[
\partial_t u(x,t) + \nabla_x\!\cdot F(u(x,t)) 
= S(u(x,t),v(t),z(t),x,t),
\qquad (x,t)\in\Omega\times[0,T],
\]
where $u$ is the conserved quantity, $F(u)$ is the nonlinear flux, and $S(u,v,z)$ encodes interior or boundary 
actuation through the continuous control $v$ and binary control $z$. The conservative formulation is essential, 
as it provides the correct weak (entropy) solution concept and the Rankine--Hugoniot relations governing shock propagation. 

We write 
\[
\Omega_T := \Omega \times (0,T)
\qquad\text{and}\qquad
L^\infty(\Omega_T)
\]
for the space of measurable functions on the space--time cylinder that are
essentially bounded, i.e.,
\[
L^\infty(\Omega_T)
=
\bigl\{
u:\Omega_T\to\mathbb{R}
\;\big|\;
\operatorname*{ess\,sup}_{(x,t)\in\Omega_T}|u(x,t)| < \infty
\bigr\}.
\]
Moreover, we write 
\[
C([0,T];L^1_{\mathrm{loc}}(\Omega))
\]
for the space of functions that are continuous in time with values in 
$L^1_{\mathrm{loc}}(\Omega)$; that is, for every compact $K\subset\Omega$,
\[
\int_K |u(x,t)-u(x,t_0)|\,dx \to 0 \quad\text{as }\; t\to t_0.
\]
$L^\infty(0,T)$ let denote the space of essentially bounded measurable functions
on $(0,T)$, while 
\[
L^\infty(0,T;\{0,1\})
= \{\,z:(0,T)\to\{0,1\}\ \text{measurable}\,\}
\]
denotes the space of measurable on/off (binary) switching controls.

\paragraph{Regularity of States, Controls, and Adjoints.} Ulbrich’s analysis~\cite{ulbrichpdeopthyper} shows that entropy solutions of controlled hyperbolic conservation laws satisfy
\[
u \in L^\infty(\Omega_T) \cap C([0,T];L^1_{\mathrm{loc}}(\Omega)),
\]
and are functions of bounded variation (BV) in space for almost every time. Away from shock curves, the solutions are piecewise smooth, but discontinuities propagate along characteristic directions.  
Even if $(v,z)$ are essentially bounded,
\[
v \in L^\infty(0,T), \qquad z\in L^\infty(0,T;\{0,1\}),
\]
the resulting state $u$ depends on $(v,z)$ in a highly nonsmooth way: small perturbations in the controls may lead to finite shifts in the shock locations. Hence, the mapping $(v,z)\mapsto u$
fails to be Gâteaux differentiable.

The adjoint equation corresponding to an optimal control problem with running cost $\ell(u,v,z)$
is a backward hyperbolic transport equation,
\[
-\partial_t p - [\nabla_u F(u)]^{\!\top}\nabla_x p
= \nabla_u S(u,v,z)^{\!\top}p + \nabla_u \ell(u,v,z),
\]
derived in~\cite{ulbrichpdeopthyper}. Ulbrich shows that the adjoint $p$ inherits the BV regularity of $u$: it is transported backward
along characteristics, remains piecewise smooth, and satisfies compatibility jump conditions across
the shocks of the forward solution. These jump conditions are essential for obtaining a consistent
sensitivity formula in the presence of discontinuities.

\paragraph{Shift-Differentiability and Sensitivity.} Because classical derivatives do not exist, Ulbrich introduced shift-differentiability for entropy solutions~\cite{ulbrichpdeopthyper}.  
A perturbation of the controls induces smooth variations in regions where $u$ is continuous, and shifts of shock curves weighted by the jump magnitude. This decomposition enables a rigorous directional derivative of the reduced cost functional even
when the solution contains shocks. Pfaff and Ulbrich extend this calculus to switched and hybrid
controls in~\cite{PfaffUlbrichTLP,Sulb2016}, where sensitivities also incorporate the influence of
switching times and discrete control actions.

\paragraph{Conservative vs.\ Nonconservative Form.}
Recall the distinction between a \emph{conservative} and \emph{nonconservative} force-driven evolution process. The latter incorporates forces that are dissipative, e.g., self-friction as well as friction with the bottom surface and air. Thus, a physically faithful approach to appropriately modeling the Shallow Water Equations as studied in hydrological engineering~\cite{castro2019shallow} requires consideration of the nonconservative hyperbolic form. 

Ulbrich’s framework~\cite{ulbrichpdeopthyper} applies strictly to conservation laws written in
conservative form,
\[
\partial_t u + \nabla\!\cdot F(u)=S,
\]
because entropy solutions, shock speeds, and the associated adjoint jumps are intrinsically defined
in this formulation.  
In contrast, the nonconservative form
\[
\partial_t u + A(u)\nabla u = S
\]
is not well defined across discontinuities unless an additional nonconservative product (in the
sense of Dal Maso--LeFloch--Murat) is specified.  For this reason, sensitivity and adjoint methods for control of hyperbolic PDEs have been formulated and
analyzed in the conservative setting. Here, the matrix $A(u)$ denotes the Jacobian of the flux,
\[
A(u) := \nabla_u F(u),
\]
so that in one spatial dimension the conservative law  $\partial_t u + \partial_x F(u) = S$ can be rewritten via the chain rule as the quasilinear form $\partial_t u + A(u)\,\partial_x u = S$.

\paragraph{Relevance for Hydrodynamic Control.} The Saint--Venant (shallow-water) equations used in this work constitute a system of hyperbolic
nonlinear PDEs with boundary and internal actuation.  Adjoint-based sensitivity methods developed in~\cite{Sulb2016,ulbrichpdeopthyper} and the switching-control extensions in~\cite{PfaffUlbrichTLP} can provide a potential tool, or base of a tool, to construct appropriate numerical methods. However, because of the nonconservative features of physical river flow, including self-friction and bottom friction, theoretical guarantees are not available. They can serve as one tool for the multi-scale stochastic control framework developed in the
subsequent sections; however, additional workarounds will be needed to consider the unknown topology of the control to state map and the lack of a dual space that is amenable to numerical approximation, in the case of the complete nonconservative modeling.

\subsubsection{On Mixed Integer PDE-Constrained Optimization}

We describe the structural aspects of the mixed-integer PDE-constrained control problem
arising from the hydropower cascade. The system consists of river segments
$u_{(r)}$, $r=1,\ldots,R$, and between two successive segments the dam $a$
is controlled by a set of continuous variables $\mathcal{N}_{a,v}\subset\{1,\dots,n_v\}$ 
and binary (on/off) switching variables $\mathcal{N}_{a,z}\subset\{1,\dots,n_z\}$, 
operating a boundary function 
\[
b_{(l)}(x)=B(v_{(j(l))},z_{(k(l))}), 
\qquad 
x\in\partial\Omega_{(l)},\quad l=1,\ldots,R-1,
\]
where $(j(l))\subset\mathcal{N}_{a,v}$ and $(k(l))\subset\mathcal{N}_{a,z}$.
The resulting optimization problem links discrete operating modes of dams with continuous
water-flow dynamics governed by the Saint--Venant or Shallow Water equations.

\paragraph{Stationary Mixed-Integer PDE-Constrained Optimization.}
Stationary mixed-integer PDE-constrained optimization extends classical finite-dimensional
MINLP techniques to steady-state PDE systems of the form
\[
e(u)=f(v,z),
\]
where $u$ denotes the PDE state, $v$ is a continuous control, and $z$ denotes binary
decision variables. Each discrete assignment $z$ requires solving a nonlinear stationary PDE.

{\bf Branch-and-bound} strategies for stationary PDE problems, such as those developed by
Hahn, Leyffer, and Zavala~\cite{hahn2017mixed}, embed relaxed continuous PDE-constrained subproblems at each node of the search tree. While the approach provides global guarantees on moderate-scale PDE models, evaluation of nonlinear PDE solves at every branch presents poor computational scaling.

{\bf Penalty and relaxation approaches} relax $z\in\{0,1\}$ to $z\in[0,1]$ and enforce
integrality using concave penalties,
\[
\frac{1}{\varepsilon}\sum_i z_i(1-z_i),
\]
as in Garmatter, Porcelli, Rinaldi, and Stoll~\cite{garmatter2022improved}, who show
that for sufficiently small $\varepsilon$ the penalized problem becomes exact.
These continuous relaxations are efficiently solved using matrix-free Newton--Krylov
interior-point methods~\cite{heinkenschloss2014matrix}, enabling high-resolution PDE models.

\paragraph{Time-Dependent Mixed-Integer PDE-Constrained Optimization.}
Time-dependent mixed-integer PDE optimization introduces substantial additional complexity:
the binary variables become functions of time, $z=z(t)$, and the PDE evolves according to
\[
\partial_t u(t) + \mathcal{A}(u(t)) = \mathcal{F}(v(t),z(t)),
\qquad t\in(0,T),
\]
with discrete controls $z(t)\in\{0,1\}^{n_z}$ that must be chosen at each time step.
The combinatorial structure grows exponentially with the number of time intervals, since
a control horizon with $N_T$ time steps yields up to $2^{n_z N_T}$ admissible switching
sequences. Moreover, switching can introduce nonsmooth and discontinuous behavior into the
PDE solution map $t\mapsto u(t)$, especially for hyperbolic systems such as Saint--Venant.

Classical relaxation--rounding and sum-up rounding techniques developed by Sager
\cite{sager2005} and extended in Zeile's dissertation~\cite{zeile2021combinatorial} form the foundation of many modern time-dependent mixed-integer approaches. These methods introduce a relaxed control $\bar{z}(t)\in[0,1]^{n_z}$, solve the relaxed optimality system, and then
construct integer-valued controls $z(t)\in\{0,1\}^{n_z}$ satisfying integral matching
constraints
\[
\int_{t_k}^{t_{k+1}} z_i(t)\,dt \;=\; 
\int_{t_k}^{t_{k+1}} \bar{z}_i(t)\,dt \qquad \forall i,
\]
thereby preserving the structure of the relaxed solution.

Decomposition-based methods for dynamic MIPDECO, introduced by Hahn, Kirches, Manns, Sager,
and Zeile~\cite{hahn2021decomposition}, further reduce complexity by splitting the problem
across space and time. Temporal decomposition replaces the full horizon problem with a
sequence of smaller subproblems, while spatial decomposition separates integer-dependent and
PDE-dependent components. Such techniques greatly improve scalability for high-dimensional,
time-dependent systems, and are particularly effective for hyperbolic dynamics such as
Saint--Venant flow, where repeated simulation of the PDE is required.

Time-parallelization and parareal-inspired decomposition methods developed by Ulbrich~\cite{ulbrich2007generalized} add another layer of structure. By solving coarse and
fine PDE propagations in parallel across multiple time slabs, these methods accelerate the
evaluation of candidate switching sequences, and enable approximate real-time behavior when
embedded inside real-time control frameworks.

Finally, recent developments summarized in Fei, Brady, Larson, Leyffer, and Shen~\cite{fei2023binary} emphasize that dynamic switching control remains computationally demanding, even outside PDE settings. Their observations on the need for relaxations, exact penalties, and structure-preserving rounding techniques extend directly to nonlinear PDE systems. In high-dimensional hyperbolic settings-such as hydropower control-achieving real-time switching performance requires combining these mixed-integer strategies with
surrogate modeling, reduced-order modeling, and decomposition.

\subsection{Complete Stochastic Dynamic Programming Formulation}

Consider the idealized Dynamic Programming Problem under uncertainty for the
operation of a hydropower cascade. Decisions include binary $z(t)\in \{0,1\}$ and continuous $v(t)\in\mathbb{R}^{n_v}$ operation variables for all $t\in [0,T]$. The revenue $\Pi$ depends on the current and future demand together with the generation partially satisfying the demand from other sources, expressed as a random variable $d(\xi(t))$. In general, since future decisions will be taken upon the realization of the uncertainty at that time, we can write them as a policy given the state. 

However, consider that we also have a forecast of the future noise. This comes in the form of a probability density $\tilde{\rho}(\xi(t))$. Naturally, we expect the spread of the density to increase with $t$.

Consider the probability space $(\Xi,\mathcal{B},\mathbb{P})$ with each realization a time-dependent stochastic process $\xi(t)$ over $[0,T]$.
 We will use $\hat{\xi}(t)$ to indicate the live, actual realizations associated with Model Predictive Control.

Generally, the control can be a function of the history of the state, as well as the previous controls up until the current time, as well as the forecasts for the future time:
\[
(z^{\xi}(t),v^{\xi}(t)) = (z(t),v(t))\left(\left\{u(\hat{\xi}(\tau)),z(\hat{\xi}(\tau)),v(\hat{\xi}(\tau))\right\}_{\tau\in[0,t]},\left\{\tilde{\rho}(\xi(\tau))\right\}_{\tau\in(t,T]}\right).
\]
Finally, the final Dynamic Programming problem is defined to be:
\begin{equation}\label{eq:pdeoptdp}
    \begin{array}{rl}
\min\limits_{u,v,z} & \int_{t=0}^T \mathcal{R}_{\xi}\left(-\Pi(u^{\xi},d(\xi(t)))+C(v^{\xi},z^{\xi})\right) dt\\
\text{s.t. } & e(u^{\xi}(x,t)) = f(v^{\xi}(t),z^{\xi}(t)),\, x\in\Omega,\,t\in[0,T]\\ 
& v^{\xi}\in\mathcal{V},\\
& S(u^{\xi})\ge 0,\, \text{almost surely}
    \end{array}
\end{equation}
for $T$ large, theoretically even $T\to\infty$, although in practice the propagating uncertainty would make distant times unrealistic to control for.

Finally, $\mathcal{R}$ refers to a coherent risk measure. We note that the simple choice of an expectation (or risk-neutral stochastic optimization) formally satisfies the discussion of the problem while providing flexibility for incorporating, e.g., Conditional Value at Risk, which would in effect minimize the upper tail of the distribution of negative profit.

To solve~\eqref{eq:pdeoptdp} exactly, one would need to compute the solution of an infinite-dimensional primal-dual stochastic Hamilton-Jacobi-Bellman equation backwards in time given the entire stochastic process up to time $t$, and identify active and inactive measure and time for the state constraint (see, e.g.,~\cite{peng1992stochastic}).

This is entirely intractable in general for nonlinear problems, let alone ones in function space. Thus, we present an Approximate Dynamic Programming approach in which we attempt to integrate multiple techniques across different time scales. This permits simultaneously taking advantage of potential adaptivity to real-time changes, together with taking advantage of potential HPC-assisted offline solution computation to high precision. 

\subsection{Background related to Stochastic PDE Dynamic Programming}\label{sec:backgroundSPDEDP}

This subsection reviews methodological foundations relevant to stochastic dynamic
programming for systems governed by partial differential equations. We summarize
numerical approaches for discretizing uncertainty, algorithmic paradigms for
stochastic PDE control, and techniques for handling probabilistic state constraints.
The discussion emphasizes challenges specific to nonlinear and hyperbolic PDEs,
including limited regularity, discontinuous solutions, and the computational
intractability of classical adjoint-based methods. This background provides the
context for the hierarchical, value-based control framework developed in the
subsequent sections.



\subsubsection{Stochastic Discretization}
\label{subsect_stodis}

In PDE-constrained stochastic optimization, the propagation of uncertainty through
time cannot generally be computed in closed form. Instead, the probability space is
discretized through sampling or approximation, enabling numerical evaluation of
statistical quantities such as expectations and risk measures. The choice of
stochastic discretization depends on the regularity of the parametric solution map
$\xi \mapsto u^\xi$, the dimension of the uncertainty, and the available
computational resources. Here, $\xi$ denotes the stochastic input parameter and $\tilde{\rho}$ denotes the
associated probability measure (or probability density) governing the uncertainty.

\paragraph{Sampling-Based Methods.}
A classical approach is Monte Carlo (MC) sampling, in which independent realizations
$\{\xi^{(i)}\}_{i=1}^M$ are drawn from the probability law $\tilde{\rho}$ and the
corresponding PDEs are solved independently. The expectation of a quantity of
interest $Q(u^\xi)$ is approximated by
\[
\mathbb{E}[Q(u^\xi)] \approx \frac{1}{M}
\sum_{i=1}^M Q\!\left(u^{\xi^{(i)}}\right).
\]
MC estimators are unbiased and dimension-independent, but converge slowly at a rate
$O(M^{-1/2})$. Quasi–Monte Carlo (QMC) methods replace random samples by
low-discrepancy sequences, such as Sobol or Halton points, improving convergence for
sufficiently regular integrands. While straightforward and robust, both MC and QMC
become computationally expensive when high-fidelity PDE solves are required.

\paragraph{Multilevel Monte Carlo.}
To mitigate the computational cost of naive sampling, multilevel Monte Carlo (MLMC)
exploits a hierarchy of discretizations to reduce variance while controlling bias.
Let $u^{\xi,\ell}$ denote the PDE solution at discretization level $\ell$, with
$\ell=0$ corresponding to the coarsest level and $\ell=L$ to the finest.
Using the telescoping identity
\[
\mathbb{E}[Q(u^{\xi,L})]
= \mathbb{E}[Q(u^{\xi,0})]
+ \sum_{\ell=1}^L
  \mathbb{E}\!\left[
    Q(u^{\xi,\ell}) - Q(u^{\xi,\ell-1})
  \right],
\]
MLMC allows coarse levels to be sampled extensively at low cost, while fine levels
are sampled sparsely. Correlated sampling across adjacent levels ensures variance
reduction of the level differences.

\paragraph{Polynomial Approximation Methods.}
Alternatives to sampling include stochastic collocation and polynomial chaos
expansions, which construct polynomial surrogates of the parametric solution map
$\xi \mapsto u^\xi$. These methods can achieve high-order convergence when
$u^\xi$ depends smoothly on~$\xi$. However, for nonlinear hyperbolic PDEs and
switching dynamics, such as those arising in hydropower systems, parametric
regularity is often lost due to shocks or regime changes. In these cases,
polynomial methods may become unstable or inefficient.

\paragraph{Relevance for the Hydropower Control Problem.}
The stochastic inputs in the hydropower cascade induce nonlinear and potentially
discontinuous effects in the forward PDE solutions. For this reason,
sampling-based methods, and in particular MLMC, provide a robust and scalable
approach to uncertainty propagation. In the offline layer, MLMC simulations
generate the statistical data required for surrogate training and value
approximation. The meso and real-time layers subsequently rely on these
discretizations to evaluate stochastic expectations efficiently without imposing
strong regularity assumptions on the underlying dynamics.
\subsubsection{Stochastic PDE Control}

Many applications involve uncertainties, such as uncertain coefficients, unknown boundary conditions, and initial conditions. These problems are commonly formulated as stochastic optimization problems constrained by partial differential equations (PDEs). 

\paragraph{Sample Average Approximation (SAA)}
Sample Average Approximation (SAA) is one of the most widely used approaches for solving such problems numerically. It replaces expectations with empirical averages computed over a set of sampled realizations, resulting in a deterministic approximation that can be solved using standard PDE-constrained optimization techniques.

Depending on the governing PDE, problems can be broadly categorized into elliptic \cite{kouri2013trust,kouri2014inexact,rosseel2012optimal}, parabolic \cite{Borzi2009,Cao2022,Guth2024}, and hyperbolic or parabolic--hyperbolic systems \cite{Esmaili2023}. While elliptic and parabolic problems generally exhibit smoother solution operators, hyperbolic systems introduce additional challenges due to discontinuities, finite-speed propagation, and strong sensitivity to perturbations, which fundamentally affect both analysis and numerical optimization.

\paragraph{Stochastic Approximation (SA)}
Stochastic Approximation (SA) methods, such as stochastic gradient descent, have been proposed as an alternative to SAA \cite{martin2021complexity}. These methods rely solely on first-order information and avoid solving large deterministic systems, but they do not exploit the favorable second-order structure typically present in PDE-constrained optimization. As a result, they tend to converge more slowly than SAA-based approaches. Nevertheless, in settings where SAA becomes computationally prohibitive, SA methods may be useful in the offline phase, trading inexpensive iterations for longer overall runtimes.

Thus far\footnote{By word of mouth in the optimization community.}, hybrid probabilistic models \cite{bandeira2014convergence} have not demonstrated competitive performance relative to SAA or SA methods in stochastic PDE-constrained optimization.

\textbf{Adjoint-based Methods.}
Adjoint-based techniques form the backbone of gradient computations in stochastic PDE control. For each realization $\xi$, one solves the forward PDE
\[
e(u^{\xi}) = f(v,z,\xi),
\]
followed by the corresponding adjoint equation. In the stochastic setting, gradients of expected objectives or risk measures are obtained by aggregating adjoint contributions across samples; see, e.g.,
\cite{Borzi2009,GarreisSuro2021,kouri2013trust,kouri2014inexact,
kouri2016risk,Milz2023,rosseel2012optimal}.  
This framework enables efficient computation of $\nabla_v J$ and $\nabla_z J$ in SAA, SA, and risk-averse formulations.

For elliptic and parabolic PDEs, adjoint equations are well posed and the control-to-state map is sufficiently smooth to permit classical sensitivity analysis. In contrast, for hyperbolic or parabolic--hyperbolic systems, adjoint calculus is significantly more delicate. Even in conservative settings, entropy solutions develop shocks and discontinuities, and perturbations in the control typically induce shifts in these discontinuities rather than smooth variations of the state. Consequently, classical differentiability fails, and more refined concepts such as shift-differentiability and BV-based adjoint formulations must be employed \cite{PfaffUlbrichTLP,Sulb2016,ulbrichpdeopthyper}.

Although these ideas are well developed in deterministic settings, their integration with stochastic sampling remains largely unexplored, with only limited results available for stochastic hyperbolic PDE control \cite{Esmaili2023}. For non-conservative systems, the situation is even less understood, with no generally accepted adjoint framework. This motivates the use of primal and derivative-free approaches in parts of the methodology proposed in this work, where adjoint information may be unreliable or unavailable.

\subsubsection{On Almost Sure State Constraints}

Here the condition 
\[
S(u^{\xi}) \ge 0 
\quad\text{a.s.\ in }\xi
\]
represents a state constraint regarding water level or flow rate. Such constraints appear naturally in hydrodynamic control for several reasons:
\begin{enumerate}
    \item physical feasibility, ensuring the computed free-surface remains above the riverbed,
    \item operational safety, avoiding flooding or excessive discharge that could damage vessels or infrastructure,
    \item environmental protection, guaranteeing ecologically mandated flow and preventing stagnation.
\end{enumerate}

The almost-sure requirement is mathematically challenging because stochastic state constraints define a feasible region
\[
\mathcal{F}
=
\left\{ (v,z) \;:\; S(u^{\xi}(v,z))\ge 0 \ \text{for a.e. }\xi \right\},
\]
which is typically nonconvex and may have an empty interior. Recent developments provide three principal approaches for handling such constraints: relaxation methods, barrier regularization, and KKT multiplier-based formulations. 

\paragraph{Optimality Conditions via Probability-Dependent Multipliers.}
Geiersbach and Wollner~\cite{geiersbach2021optimality} derive first-order necessary conditions 
for optimization problems with almost-sure state constraints in Banach spaces. Their analysis introduces a probability-dependent multiplier $\lambda(\xi)\ge 0$ supported on the active set 
\[
A = \{\,\xi : S(u^{\xi}) = 0\,\}.
\]
The resulting optimality system involves the variational inequality
\[
\mathbb{E}\Big[ 
\langle \nabla_u S(u^{\xi}), \delta u^{\xi} \rangle \,\lambda(\xi)
+ \langle \nabla_{(v,z)} J(u^{\xi},v,z), (\delta v,\delta z) \rangle 
\Big]
\ge 0.
\]
Such multipliers may be singular or highly concentrated, posing severe numerical difficulties in PDE settings.

\paragraph{Relaxation of Almost-Sure Constraints.}
Kouri, Staudigl, and Surowiec~\cite{kouri2023relaxation} propose a relaxation-based surrogate that replaces
\[
S(u^{\xi})\ge 0 
\]
by
\[
S(u^{\xi}) + r_\epsilon(\xi) \ge 0,
\]
where the relaxation term $r_\epsilon(\xi)\to 0$ in probability as $\epsilon\to 0$. The relaxed problems are differentiable, admit stable adjoints, and converge (in a probabilistic sense) to feasible solutions of the original almost-sure constrained problem. This approach is attractive when the number of uncertainty samples is large.

\paragraph{Barrier Regularization.}
Barrier methods~\cite{schiela2009barrier} enforce strict feasibility via the augmented objective
\[
\mathcal{J}_{\beta_B}(v,z)
=
\mathbb{E}[J(u^{\xi},v,z)] 
-
\beta_B\,\mathbb{E}\!\left[\log(S(u^{\xi}))\right],
\qquad 
\beta_B > 0,
\]
ensuring $S(u^{\xi})>0$ for all samples. The barrier term preserves differentiability of the reduced problem and is compatible with adjoint-based gradient computation.

\paragraph{Adjoint Equation Under the Barrier Term.}
For each realization $\xi$, the adjoint $p^{\xi}$ associated with the barrier-regularized functional satisfies
\[
-e^*(p^{\xi})
=
\nabla_u J(u^{\xi},v,z)
-
\beta_B \frac{\nabla_u S(u^{\xi})}{S(u^{\xi})}.
\]
The barrier forcing term grows as $S(u^{\xi})\to 0$, preventing violation of the constraint and stabilizing the optimization.

\paragraph{KKT Multiplier Structure.}
As $\beta_B\to 0$, barrier multipliers converge to the almost-sure multipliers of 
\cite{geiersbach2021optimality}:
\[
\lambda(\xi)
=
\lim_{\beta_B\to 0}
\left(
\frac{\beta_B}{S(u^{\xi})}
\right),
\qquad 
\lambda(\xi) S(u^{\xi}) = 0.
\]
This formal correspondence explains why barrier methods serve as a practical numerical proxy for the theoretically exact but computationally intractable KKT system.

In the hydropower setting, barrier regularization offers the most practical balance between strict feasibility, differentiability, and computational efficiency. Relaxation approaches serve as an effective alternative for large sample sizes, whereas KKT multiplier formulations provide important theoretical insight, but are unsuitable as numerical tools for high-dimensional hyperbolic PDE-constrained stochastic optimization.

\subsubsection{Dynamic and Multistage Stochastic Programming with PDEs}

The paper~\cite{santosuosso2025distributed} considers Stochastic MPC for a Hydropower Cascade, but does not include PDE systems for the water flow as state equations. It presents an efficient distributed scheme for solving the operation of performing MPC with scenarios for the economic operation of the cascade dams.

An important consideration for the operation of MPC is closed-loop stability---that the repeated implementation of the first-time control solution through the operation of MPC yields a trajectory that is physically stable and of approximately optimal performance. We do not attempt to prove closed-loop stability for the system under consideration, but can briefly remark that in the case of hyperbolic systems, the turnpike property, wherein we expect the solution to stay close to an action-minimizing trajectory, is the strategy for establishing stability, e.g.,~\cite{gugat2023turnpike}.

When the statistical integral $\mathcal{R}$ is a risk measure, rather than expectation, risk-consistency becomes a concern. This problem, discussed in~\cite{shapiro2016time,pichler2022risk}, considers the validity of backwards induction in multistage stochastic programming. That is, the risk optimal solution for time stage $t+1$ may prove suboptimal when the risk metric of stages $t$ and $t+1$ taken together, in aggregate, is considered. At the time of this writing, this remains a largely open problem in the case of nonlinear problems.

\subsubsection{Approximate Dynamic Programming of PDE Systems}

The literature on the optimal control of PDEs using Approximate Dynamic Programming (ADP) remains limited. Talaei and co-authors appear to be the primary contributors addressing this class of problems, focusing on ADP-based boundary control for 1D and 2D parabolic equations~\cite{talaei2015adaptive,talaei2016boundary,talaei2017boundary}. The research in \cite{talaei2015adaptive} considers the control of a coupled semi-linear parabolic PDE system with an unknown nonlinearity under Neumann boundary conditions, where the Hamilton-Jacobi-Bellman (HJB) equation is formulated directly in the infinite-dimensional space, with the unknown nonlinearity estimated online by a neural network (NN) approximator. The framework was later extended to multi-dimensional nonlinear PDEs \cite{talaei2016boundary}, specifically for the boundary control of an uncertain 2D Burgers equation. In a subsequent study \cite{talaei2017boundary}, ADP is used to solve the problem with a linear uncertain parabolic PDE, characterized by representing the value function as a surface integral and a novel NN method with a new weight law.

However, the references above concern deterministic systems. ADP for stochastic PDE systems has not yet been
directly applied.  We now mention some representative prior literature on ADP for stochastic ordinary differential equations. \cite{Busic2018} proposed a new perspective for handling decision-making systems with random parameters, characterized by designing an approach to obtain a family of value functions by solving a family of ODEs. In \cite{Poznyak2023}, a differential-neural-network-based min-max robust control method was proposed for nonlinear disturbed systems with uncertain parameters.

\section{On the Control to State Map}

\subsection{State Equation Solution Existence}

A major challenge in hydrological modeling that presents limitations for the formulation and development of optimal control tools is the limited understanding of the basic solution and regularity properties for this class of functions. Namely, it has been generally observed that only a restricted set of initial and boundary conditions is consistent with the existence of solutions that can be stably and accurately simulated numerically. However, precise quantitative requirements are absent, and hydrologists either rely on catalogs of known acceptable conditions, or try various conditions until finding one suitable for simulation, or eschew physical modeling altogether and use empirical ``HBV'' models~\cite{bergstrom2015interpretation}, which, however, generalize poorly across regimes and ambient conditions. 
For a general survey, see, e.g.,~\cite{paniconi2015physically} and also see~\cite{cheng1993tidal} for a well-known representative comprehensive case study of hydrological modeling in the San Francisco Bay.

Formally, the nonconservative form indicates that even Young's measures and statistical solutions are not applicable, and completely Generalized Function Spaces, corresponding to Colombeau algebras, see e.g.,~\cite{colombeau2011elementary}, become the most appropriate functional analysis toolbox.


Letting $\overline{\Omega}=[0,L]\times[0,T]$ for some $L,T>0$, consider defining a hyperbolic system differential operator on $\mathcal{G}\left(\bar{\Omega}\right)$ by:
\begin{equation}\label{eq:diffop}
S_{\epsilon} U_{\epsilon} = \begin{pmatrix}
\frac{\partial A_{\epsilon}}{\partial t}+\frac{\partial Q_{\epsilon}}{\partial x} \\
 \frac{\partial Q_{\epsilon}}{\partial t} +\frac{\partial(Q_{\epsilon}^2/A+g A^2/2)}{\partial x}-\sum\limits_{i=1}^K b^i_{\epsilon} (A_{\epsilon})^{n^A_i}(Q_{\epsilon})^{n^Q_i}.     
\end{pmatrix}
\end{equation}

Let us define a corresponding net of operator $S_{\epsilon}$ acting on $U_{\epsilon}=(A_{\epsilon},Q_{\epsilon})$ with,
\begin{equation}\label{eq:netsaintven}
    S_{\epsilon} U_{\epsilon} = \begin{pmatrix}
\frac{\partial A_{\epsilon}}{\partial t}+\frac{\partial Q_{\epsilon}}{\partial x} \\
 \frac{\partial Q_\epsilon}{\partial t} +\frac{\partial(Q_\epsilon^2/A_\epsilon+g A^2/2)}{\partial x}-g A_{\epsilon}\left(-\frac{\partial z_b}{\partial x}+\frac{n_M^2 Q_{\epsilon} |Q_{\epsilon}|}{A^{10/3}_{\epsilon}}\right).       
    \end{pmatrix}
\end{equation}
Similarly, we must have for all $\epsilon\in (0,\epsilon_0)$, that $\inf \left|A_{\epsilon}(x)\right|\ge C \epsilon^m$ for some $m\in \mathbb{N}$.

Recall Schauder's Fixed Point Theorem (see, e.g.,~\cite[Theorem 2.A and Corollary 2.13]{zeidler1986nonlinear}.

\begin{theorem}\label{th:shauder}

Let $X$ be a Banach space and either
\begin{itemize}
    \item  $M$ be a nonempty, closed, bounded and convex subset of $X$ and suppose $T:M\to M$ is a compact operator, or
    \item  $M$ be a nonempty, compact,  and convex subset of $X$ and suppose $T:M\to M$ is a continuous operator,
\end{itemize}
then $T$ has a fixed point 
\end{theorem}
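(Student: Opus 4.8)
The plan is to prove this by reduction to Brouwer's fixed-point theorem in finite dimensions, which is the classical route for Schauder's theorem. First I would dispatch the second bullet by showing it is a special case of the first: if $M$ is compact then it is in particular closed and bounded, and a continuous $T$ carries the compact set $M$ into the compact (hence relatively compact) set $T(M)$, so $T$ is automatically a compact operator in the sense of the first bullet. It therefore suffices to treat closed, bounded, convex $M$ with $T$ continuous and $\overline{T(M)}$ compact, and I would spend the remaining effort there.

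For that case the idea is to approximate $T$ by maps with finite-dimensional range. Fix $\epsilon > 0$. Since $\overline{T(M)}$ is compact, choose a finite $\epsilon$-net $\{y_1,\dots,y_n\}\subset T(M)\subseteq M$ and define the Schauder projection
\[
P_\epsilon(y) = \frac{\sum_{i=1}^n \mu_i(y)\, y_i}{\sum_{i=1}^n \mu_i(y)}, \qquad \mu_i(y) = \max\{0,\ \epsilon - \|y - y_i\|\}.
\]
The denominator never vanishes on $T(M)$ precisely because the $y_i$ form an $\epsilon$-net, each $\mu_i$ is continuous, and a convexity estimate yields $\|P_\epsilon(y) - y\| \le \epsilon$ for every $y \in T(M)$. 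Setting $M_\epsilon = \mathrm{conv}\{y_1,\dots,y_n\}$, which lies in $M$ by convexity of $M$ and is a compact convex subset of a finite-dimensional subspace, the composition $P_\epsilon \circ T$ maps $M_\epsilon$ continuously into itself.

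I would then apply Brouwer's fixed-point theorem to $P_\epsilon \circ T$ on $M_\epsilon$ to obtain $x_\epsilon \in M_\epsilon$ with $P_\epsilon(T(x_\epsilon)) = x_\epsilon$, so that $\|x_\epsilon - T(x_\epsilon)\| = \|P_\epsilon(T(x_\epsilon)) - T(x_\epsilon)\| \le \epsilon$. Taking $\epsilon = 1/k \to 0$ produces a sequence $(x_k)$ with $\|x_k - T(x_k)\| \to 0$. Since each $T(x_k) \in \overline{T(M)}$, compactness yields a subsequence $T(x_{k_j}) \to y$; the vanishing residual then forces $x_{k_j} \to y$, and $y \in M$ because $M$ is closed. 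Continuity of $T$ gives $T(x_{k_j}) \to T(y)$, and comparing limits yields $y = T(y)$, the desired fixed point.

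The main obstacle is not the approximation scheme itself --- which is routine once the Schauder projection is in place --- but the fact that the entire argument rests on Brouwer's theorem, a genuinely topological input whose own proof (via degree theory, Sperner's lemma, or a no-retraction argument) is nontrivial; here I would simply invoke it, as the authors do by citing \cite{zeidler1986nonlinear}. A secondary point requiring care is the estimate $\|P_\epsilon(y) - y\| \le \epsilon$ together with the non-degeneracy of the barycentric weights, which is exactly where the $\epsilon$-net property and the convexity of $M$ (needed to guarantee $M_\epsilon \subseteq M$) are used in tandem.
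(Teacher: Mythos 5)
Your proposal is correct, but note that the paper itself offers no proof of this theorem: it is recalled as a classical result with a citation to Zeidler (Theorem 2.A and Corollary 2.13), so there is no in-paper argument to compare against. What you have written is essentially the standard proof found in that reference --- finite $\epsilon$-nets of the relatively compact image, the Schauder projection $P_\epsilon$ with barycentric weights, Brouwer's theorem on the finite-dimensional convex hull, and a compactness-plus-continuity limiting argument --- and all the steps check out, including the two delicate points you flag (non-vanishing of the denominator via the $\epsilon$-net property, and $M_\epsilon\subseteq M$ via convexity). One small structural remark: you derive the compact-convex bullet as a special case of the closed-bounded-convex bullet, which is logically fine since a continuous map on a compact set is automatically a compact operator; the traditional presentation (as in Zeidler) goes in the opposite direction, proving the compact-convex case first and obtaining the other bullet by restricting $T$ to $\overline{\mathrm{conv}}\,T(M)$, which is compact by Mazur's theorem. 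Your ordering avoids invoking Mazur's theorem at the cost of running the projection argument on the larger set $M$ directly --- a fair trade, and both routes are standard.
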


Application of this Theorem to establish local existence for nonlinear PDEs by considering locally monotone operators is given in~\cite{liu2011existence}.

\paragraph{A Double Loop Fixed Point Scheme}
In~\cite{Oberguggenberger1987}, the existence of global Generalized solutions in the Colombeau algebra $\mathcal{G}(\R)$ and $\mathcal{G}(\R^2)$ is established. Given that our systems of interest are nonlinear, we will consider a fixed-point iteration wherein a solution is first sought with components of the nonlinear PDE fixed. 

Notably, the work in~\cite{Oberguggenberger1987} considers the generic PDE:
\begin{equation}\label{eq:obersemilin}
M_1(x,t)\partial_t u+M_2(x,t) \partial_x u = F(x,t,u)
\end{equation}
and studies existence guarantees under the standing assumption that $\partial_u F$ is bounded for every compact subset of $(x,t)$. It can be noted that this semilinear form does not fit the structure of the nonlinear PDE systems introduced as the focus of this work. This is what necessitates a double-loop fixed-point scheme to derive the Schauder argument. 

In the cases we are considering, we are motivated by applications in design optimization and control with circumstances that necessitate bounds on the solutions. That is, we assume that in accordance with some external necessity, we are only concerned with solutions $u$ satisfying,
\begin{equation}\label{eq:solnbounds}
l_A \le \vert A\vert \le u_A,\,\,l_Q\le \vert Q\vert \le u_Q
\end{equation}
in the pointwise almost sure sense. Thus, the field is both lower and upper bounded. This would correspond to a maximal allowable volume and velocity for fluid dynamics, which is reasonable for container limits and stability, as well as a lower bound, which can be of concern for maintenance of certain environmental and ecological properties. 

For the first example, consider:
\[
\frac{\partial Q}{\partial t} +\frac{\partial(Q^2/A+g A^2/2)}{\partial x}=gA(S_o-S_f(A,Q))
\]
and let $Q^{(0)},A^{(0)}$ be two functions satisfying initial and boundary conditions in \eqref{eq:saintv}. Then, by~\cite{Oberguggenberger1987}, let $Q^{(1)},A^{(1)}$ be the solution to
\[
\frac{\partial Q}{\partial t} +\frac{\partial(Q\frac{Q^{(0)}}{A^{(0)}}+g AA^{(0)}/2)}{\partial x}=gA(S_o-S_f(A^{(0)},Q^{(0)})).
\]
We proceed to consider, next, the solution $Q^{(2)},A^{(2)}$ to
\[
\frac{\partial Q}{\partial t} +\frac{\partial(Q\frac{Q^{(1)}}{A^{(1)}}+g AA^{(1)}/2)}{\partial x}=gA(S_o-S_f(A^{(1)},Q^{(1)})).
\]
and so on. 

From~\eqref{eq:solnbounds}, we know that there exists a $K_l$ and $K_u$, as well as $\tilde{S}_l,\tilde{S}_u$, such that,
\begin{equation}\label{eq:conditionsboundthm}
\begin{array}{l}
K_l\le \frac{l_Q}{u_A}\le \frac{Q^{(0)}}{A^{(0)}} \le \frac{u_Q}{l_A}\le K_u \\
K_l\le \frac{l_Q}{u_A}\le \frac{gA^{(0)}}{2} \le \frac{u_Q}{l_A}\le K_u \\
\tilde{S}_l\le \frac{n_m^2 l^2_Q}{u^{10/3}_A}\le S_f(A^{(0)},Q^{(0)}) = \frac{n_M^2 Q^{(0)}\left\vert Q^{(0)}\right\vert}{(A^{(0)})^{10/3}} \le \frac{n_M^2 u^2_Q}{l^{10/3}_A}\le \tilde{S}_u. \\
\end{array}
\end{equation}

We can obtain the following straightforward Theorem:
\begin{theorem}
    There exists some $\bar{T}$, such that for all $T\le \bar{T}$, there exists a $\bar{B}>0$ such that there exist $K_l,K_u,\bar{S}_l,\bar{S}_u$ such that with initial conditions satisfying~\eqref{eq:conditionsboundthm}, the following bound holds:
    \begin{equation}
    \max\left\{\|A\|_{L_{\infty}},\left\|A^{-1}\right\|_{L_{\infty}},\|A\|_{L_{\infty}},\left\|A^{-1}\right\|_{L_{\infty}}\right\} \le \bar{B}
    \end{equation}
    for time $0\le t\le T$. As such, the map defined by these iterations is a map between two compact spaces within the set of Colombeau Algebras $\mathcal{C}$, and thus a fixed point exists, which by definition is a solution to the PDE. 
\end{theorem}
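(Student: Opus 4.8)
The plan is to separate the statement into its two genuine components: an \emph{a priori confinement estimate}, which keeps the frozen-coefficient iterates $(A^{(k)},Q^{(k)})$ inside the box prescribed by \eqref{eq:solnbounds} whenever the horizon $T$ is short, and a \emph{fixed-point step}, which feeds this confinement into Theorem~\ref{th:shauder}. First I would fix notation for the iteration map $\Phi:(A^{(k)},Q^{(k)})\mapsto(A^{(k+1)},Q^{(k+1)})$, where the image solves the linear system consisting of the (already linear) continuity equation $\partial_t A+\partial_x Q=0$ together with the momentum equation in which the convective ratio $Q^{(k)}/A^{(k)}$, the pressure coefficient $gA^{(k)}/2$, and the friction term $S_f(A^{(k)},Q^{(k)})$ are frozen at the previous iterate. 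With the coefficients frozen, the right-hand side has $\partial_u F$ bounded on compacta, so each step is well posed in $\mathcal{G}(\bar\Omega)$ by \cite{Oberguggenberger1987}. The candidate set for Schauder is
\[
M=\bigl\{(A,Q)\in\mathcal{G}(\bar\Omega)^2 : l_A\le|A|\le u_A,\ l_Q\le|Q|\le u_Q,\ \text{prescribed initial/boundary data}\bigr\},
\]
and the task is to show $\Phi(M)\subseteq M$ and that $\Phi$ is compact, or continuous on a compact refinement of $M$.

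Next I would produce the bound $\bar B$ through an energy (equivalently characteristic) estimate for the linear system at each step. Because the frozen coefficients are controlled uniformly by $K_l,K_u,\tilde S_l,\tilde S_u$ via \eqref{eq:conditionsboundthm}, the friction and slope contributions enter as zeroth-order coefficients multiplying the unknown, and a Gronwall argument yields
\[
\bigl\|(A^{(k+1)},Q^{(k+1)})(\cdot,t)\bigr\|_{L^\infty}\le \bigl\|(A_0,Q_0)\bigr\|_{L^\infty}\,e^{Ct},
\]
with $C=C(K_u,\tilde S_u,g)$ independent of $k$ and, on the net level, of $\epsilon$. Choosing $\bar T$ so small that $e^{C\bar T}$ cannot carry the initial data past $u_A,u_Q$ secures the upper bounds for all $t\le\bar T$. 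The lower bound on $A$, i.e.\ the control of $\|A^{-1}\|_{L^\infty}$, I would read off the continuity equation: since $\partial_t A=-\partial_x Q$, integration in time shows $A$ cannot fall from its initial value $\ge l_A$ to zero in a time shorter than a quantity governed by $\|\partial_x Q\|_{L^\infty}$, which is again bounded by the frozen data; shrinking $\bar T$ keeps $A$ uniformly away from zero, so that $1/A$ is a moderate (indeed genuinely bounded) net, consistent with the requirement $\inf|A_\epsilon|\ge C\epsilon^m$.

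With $\Phi(M)\subseteq M$ established, I would close via Schauder. The delicate point is the compactness that the statement invokes, because hyperbolic solution operators do not gain regularity and $M$ as written is not compact in $L^\infty$. The remedy is to enlarge the defining data of $M$ with uniform bounds on the first (and, if needed, higher) space--time derivatives --- on the Colombeau net these are precisely the moderateness estimates --- so that $M$ becomes precompact in $C^0(\bar\Omega)$ by the Arzel\`a--Ascoli theorem. Differentiating the frozen-coefficient equation and repeating the Gronwall estimate then shows that $\Phi$ propagates these derivative bounds, hence maps the resulting compact convex set continuously into itself; the second alternative of Theorem~\ref{th:shauder} delivers a fixed point, which by construction satisfies the unfrozen Saint--Venant system in $\mathcal{G}(\bar\Omega)$ and is therefore a generalized solution.

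I expect the compactness step, rather than the confinement estimate, to be the principal obstacle. Showing $\Phi(M)\subseteq M$ is routine Gronwall bookkeeping, but the absence of smoothing for the hyperbolic operator forces the derivative estimates to be carried through the iteration \emph{uniformly in} $\epsilon$, and to be placed at a controlled level of the Colombeau moderateness hierarchy rather than allowed to inflate the $\epsilon$-powers at each solve. A secondary subtlety is preserving the lower bound on $A$ across iterations, since any transient loss of positivity would eject the friction term (with its $A^{10/3}$ denominator) and the inverse $1/A$ from the algebra; it is exactly the short-horizon restriction $T\le\bar T$ that forbids this.
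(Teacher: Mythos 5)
Your proposal takes essentially the same route as the paper: a short-time a priori confinement bound obtained by integrating the frozen-coefficient iteration in $t$ and $x$ (your Gronwall estimate), followed by an invocation of Schauder's Fixed Point Theorem~\ref{th:shauder}. In fact, your treatment is considerably more complete than the paper's proof, which consists only of the assertion that ``the bound follows immediately by integration\ldots and general compactness and boundedness arguments''; your observation that the $L^\infty$ box alone cannot be compact for a non-smoothing hyperbolic solution map, and your repair via derivative (moderateness) bounds uniform in $\epsilon$ together with Arzel\`a--Ascoli, supplies precisely the step the paper leaves unjustified.
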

\begin{proof}
    The bound follows immediately by integration with respect to $t$ and $x$ and general compactness and boundedness arguments. The result follows as a direct application of Schauder's Fixed Point Theorem~\ref{th:shauder}.
\end{proof}
\subsection{Dual and Adjoint Considerations}

Most recently, analysis of optimal control with generalized function spaces has been explored in~\cite{konjik2008foundations} and~\cite{frederico2022calculus}. However, these define the Pontryagin principles and the corresponding Euler-Lagrange equations. In order to develop approximate numerical methods, duality will need to be explored. Duality theory for Colombeau algebras is discussed in~\cite{garetto2005topological}. 

More broadly, a Colombeau Algebra is a Banach Algebra, which is a complete normed Banach space (e.g.,~\cite{soleimani2009optimality}). While the topological dual (with the weak$^*$ topology of convergence) is the set of bounded linear functions $\mathcal{L}(\mathcal{C},\mathbb{R})$, we can recall from distribution theory that this includes the space of compactly supported analytic functions $\mathbb{C}^{\infty}_c$. In fact, by~\cite{garetto2005topological}, this inclusion is a continuous embedding. Although this embedding is not dense, thus making approximation guarantees elusive at the present time, we can consider that a numerical implementation can incorporate multipliers from the space $\mathbb{C}^{\infty}_c$. 

As the space of state solutions $\mathcal{C}$ is a Banach space, we can apply the literature on Optimization in Banach spaces~\cite{hinze2009optimization}.  From this work, we can see that standard arguments for the global convergence properties of the Armijo line search extend from optimization over Hilbert to Banach spaces. 

The closest to our work is~\cite{ulbrich2007generalized}, which explored time-dependent problems and presented a trust region SQP method. The work~\cite{pfaff2015optimal} considers nonlinear hyperbolic conservation laws with switched boundary control, a setting that closely resembles ours. However, in practice, nonconservative shallow water equations are used by hydrologists to model river flow, as dissipative forces, including surface, air, and self-friction, are influential for the river flow dynamics. That is, in many practical river flow models trying to capture rapid variations (hydraulic jumps, rapid transients), practitioners often use non-conservative (also referred to as primitive) formulations of the shallow-water (Saint-Venant in 1d) equations (or mixed/combined formulations), because of their advantages in stability, handling of source terms, computational efficiency or matching observed behavior. See~\cite{abgrall2023new,pudjaprasetya2014momentum,wei2013depth} for a discussion of some of the observations of numerical modeling and simulation of different forms of shallow water CFD systems.

\section{Three-Time Scale Control - Summary}

Notably, the problem described above is far from achieving a practical resolution. There are a number of theoretical and computational hurdles necessary to overcome to approach the problem with some level of clarity and precision. 

In this work, we present a general framework for a three-time-scale approach that we believe constitutes a simultaneously necessary and probably sufficient set of operations to satisfactorily perform the control operation of river flow for hydrological engineering.

One of the most salient problems is the very partial existence results, with strict bounds in the Theorem above, presenting fundamental challenges in ensuring the existence of a reliable control to the state map. Because quantitative guarantees of solution existence may not be available in a number of boundary and initial condition regimes, one may have to spend considerable time offline finding the sets of conditions for which control solutions yield PDEs that exist and can be simulated. 

To this end, the unlimited volume of computational power that can be theoretically applied in offline operation presents the necessity for defining HPC-aided procedures to solve very large control problems and fine PDE discretizations, and attempt to find a catalog of solutions that exist and direct the optimization criteria to favorable directions. From this catalog, a set of global solutions will be refined across different regimes of real-time exogenous noise as it occurs in practice. 

At the meso layer, a set of candidate integer programming (IP) solutions is maintained. Approximate Dynamic Programming will be applied to optimize the continuous variables for problems defined by particles associated with each IP decision set. Heuristics will facilitate the execution of exploration in parallel, as well as potentially try a fresh draw from the solution catalog. 

In real-time, Newton algorithms must be applied to be able to perform computation fast enough for real-time realization and modeling. Using techniques in bifurcation theory to handle solution kernel degeneracy, together with recent methods in Real Time Optimization for Mixed Integer Programs, along with Piecewise Newton Methods, one can define the appropriate toolkit to perform real-time control. The real-time formulations will arise from problems defined by the meso layer serving as a tracking term in a convex functional for the fast MPC problem. Moreover, the models used in real-time are Model-Order Reduced from the more precise PDE simulations in the meso layer.

Integration of the algorithms across the three layers presents a challenging and important mathematical and scientific computing exercise. We summarize the approach below, defining the properties of the three layers and their interfacing.

Recall that, operationally, in the motivating application of dam networks discussed in the Introduction, these components correspond respectively to planning, supervisory control, and real-time actuation tasks. Accordingly, the proposed framework is structured into the following three tiers:
\begin{enumerate}
    \item \textbf{Offline}: This constitutes the running of computational algorithms without consideration of any contemporaneous operation, i.e., any cost-sensible and feasible running of computation. This permits the use of computing clusters with significant HPC resources to run over a long time period. Thus, this setting encourages formulations and discretizations that push the level of dimensionality of the problem to otherwise intractable orders of magnitude. This operation can be performed at the beginning to generate a catalog of a large set of feasible, sensible, stable, and well-performing solutions, with the attempt to arrive at, or get close to, global optimality for some formulations. Subsequently, this catalog will serve as a reliable reference that can feed the operation of meso-layered approximate dynamic programming. Offline solutions can be recommended through AI training and can serve as both potential solution paths to explore or a terminal state and/or value function for Model Predictive Control or Approximate Dynamic Programming, respectively. The offline computation can be periodically run to steadily accumulate more refined and better solutions, while taking feedback from the data observed in real-world operation. Table~\ref{tab:offline} describes the central features of this layer.

\begin{table}[H]
    \centering
    \scalebox{0.95}{\begin{tabular}{|l|p{5cm}|p{4cm}|}
    \hline
\textbf{\underline{Property}} & \textbf{\underline{Description}} & \textbf{\underline{Challenge Addressed}}
    \\
    \hline\hline
\textbf{Role} & Find regions of control-to-state existence & Lack of a priori knowledge \\
\hline
\textbf{Output} & Construct Catalog of Solution Candidates & Intractability for low latency solvers
    \\
    \hline\hline
          \textbf{Hardware} & HPC at Maximal Resource Utilizaiton & Nonlinear High dimensional System \\
       \hline
     \textbf{Latency}  & Unlimited & Extended computation time \\ \hline
     \textbf{Dimensionality} & Millions & Complex system that requires high fidelity for accurate and precise modeling \\
     \hline\hline
     \textbf{Numerical Tools} & CFD numerical solutions & \\ \hline
     \textbf{Data Driven Tools} & Reinforcement Learning & Lack of a priori solution existence guarantees\\ \hline
    \end{tabular}}
    \caption{Operational purpose and resource requirements for Offline Computation}
    \label{tab:offline}
\end{table}

    \item \textbf{Meso-Temporal}: The meso layer represents a reliable computational bridge between the offline and the real-time layer. While not pushing the dimensionality to the extreme degree of the Offline layer, it still retains the full model faithfulness as far as incorporating the PDE formulation. It involves the steady evolution of a moderate number of particles representing potential solutions, and applies procedures that can be solved using Nonlinear Programming software. The offline catalog will feed candidates to the set of particles in a deliberate and informed manner, while the best (appropriately defined) particles will be the source of Model Order Reduction to define the Real-Time operation. The meso layer is meant to take advantage of the comprehensive toolkit available for solving nonlinear constrained optimization problems as commonly used for nonlinear model predictive control. These tools are meant to handle problems with amenable regularity - hence the necessity of the offline catalog to identify appropriate regimes, while also incorporate relatively high dimensional nonlinear models - which would be too difficult to solve in real time latency.
    
\begin{table}[H]
    \centering
    \scalebox{0.95}{\begin{tabular}{|l|p{5cm}|p{4cm}|}
    \hline
\textbf{\underline{Property}} & \textbf{\underline{Description}} & \textbf{\underline{Challenge Addressed}}
    \\
    \hline\hline
\textbf{Role} & Perform reliable nonlinear numerical optimization & Complex nonlinear models \\
\hline
\textbf{Output} & Optimal and near optimal solution estimates for operation & Reliable model predictive control with appropriate forward horizon
    \\
    \hline\hline
          \textbf{Hardware} & High end computing resources devoted entirely for this operation & Leveraging robust reliable numerical optimization solvers \\
       \hline
     \textbf{Latency}  & 0.1-2 hours & Nonlinear high dimensional problem with adaptation to changing environment \\ \hline
     \textbf{Dimensionality} & Tens of Thousands & Balance of fidelity and tractability \\
     \hline\hline
     \textbf{Numerical Tools} & Nonlinear Programming solvers & Reasonable model fidelity for optimal solution generation \\ \hline
     \textbf{Data Driven Tools} & Adaptive tuning and particle management & Mixed binary-continuous problem and potentially disparate regimes\\ \hline
    \end{tabular}}
    \caption{Operational purpose and resource requirements for Meso-scale Computation}
    \label{tab:meso}
\end{table}

    \item \textbf{Real-Time}: The real-time layer corresponds to the actual implemented control in the fast operation of the engineering system. It will use state-of-the-art fast Newton-based solvers, extended recently to problems with mixed-integer decisions. Linearized problems that approximate the current state locally are fed from the meso layer, and the computation is performed locally on safe, fast-performing control code. Feedback from the real evolving state and any additional information will be continuously sent to the meso layer, closing the end-to-end operation of the entire algorithmic structure. 

    \begin{table}[H]
    \centering
    \scalebox{0.95}{\begin{tabular}{|l|p{5cm}|p{4cm}|}
    \hline
\textbf{\underline{Property}} & \textbf{\underline{Description}} & \textbf{\underline{Challenge Addressed}}
    \\
    \hline\hline
\textbf{Role} & Choose real-time adaptive control law & Fast decisions for desired latency for power grid resilience (e.g. ancillary services) \\
\hline
\textbf{Output} & Controls to implement & Physical Operation
    \\
    \hline\hline
          \textbf{Hardware} & Embedded controller & Fast desired latency with proximity to decision system \\
       \hline
     \textbf{Latency}  &  Seconds to Minutes & Rapid adaptation to requirements and new solution estimates \\ \hline
     \textbf{Dimensionality} & Around 100 & Tractability \\
     \hline\hline
     \textbf{Numerical Tools} & Real Time Iteration and MIP solvers & Warm starting and optimizing reduced order models \\ \hline
     \textbf{Data Driven Tools} & Solution and sensor information sent asynchronously up to the Meso hardware & Long run tuning of the architecture \\ \hline
    \end{tabular}}
    \caption{Operational purpose and resource requirements for Real-time Computation}
    \label{tab:meso}
\end{table}

\end{enumerate}

Figure~\ref{fig:3tierschema} presents the overall structure and flow of the computational architecture, together with the communication between the layers.
\begin{center}
\begin{figure}
\includegraphics[scale=0.38]{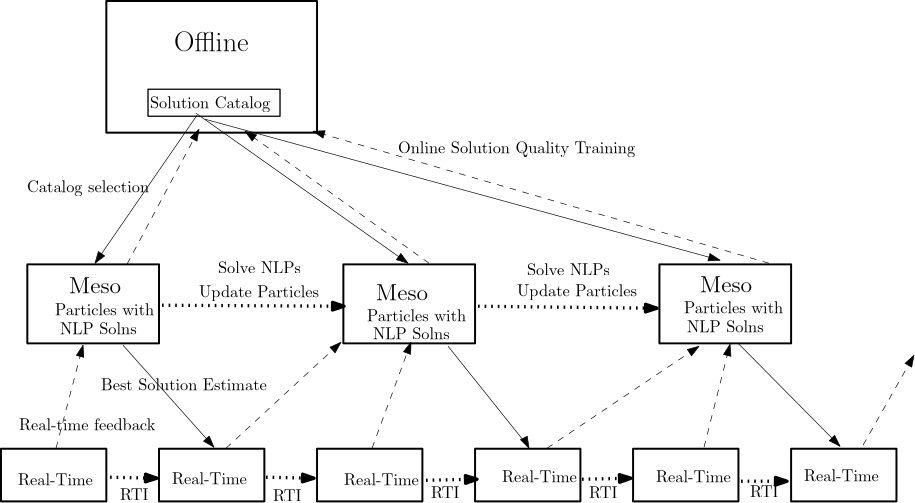}
    \caption{Three time scale computational system architecture for real time control of complex nonlinear dynamics}\label{fig:3tierschema}
\end{figure}
\end{center}
\newpage
\section{Offline Computation of a Catalog of Reference Solutions}

The goal of the offline computation phase is to apply brute force HPC computing power in order to obtain very strong and precise results. In this Section, we describe methods that attempt to solve the Dynamic Programming through a variety of approximating procedures and methods of solution computation. In the context of dam network operation, these problems correspond, for example, to the determination of admissible gate operation policies and flow regulation strategies under hydraulic and operational constraints. We can note immediately that the exact solution of~\eqref{eq:pdeoptdp} by way of defining and solving a Hamilton-Jacobi-Bellman equation is intractable, on account of the infinite dimensionality of the PDE system together with the presence of control constraints. Thus, we present a set of potential methods often used to solve Dynamic Programming approximately that are amenable to discretization and computation.

Before describing the individual components, we summarize the role of each offline method in Table~\ref{tab:offline_methods_summary}. The purpose of Section~5 is not to propose each listed method as a standalone solver for the full stochastic mixed-integer PDE-constrained control problem. Rather, the offline layer combines numerical optimization, reduced switching-structure search, and data-driven learning to construct a catalog of feasible and high-quality reference trajectories. These catalog entries are later used for meso-layer initialization, terminal-value approximation, feasibility guidance, and real-time warm starts.

\begin{table}[H]
\centering
\small
\begin{tabular}{|p{0.25\textwidth}|p{0.30\textwidth}|p{0.14\textwidth}|p{0.18\textwidth}|}
\hline
\textbf{Component} 
& \textbf{Purpose in the offline layer} 
& \textbf{Type} 
& \textbf{Techniques} \\
\hline
Multistage stochastic 
mixed integer optimization 
& Approximate the stochastic dynamic programming problem and generate reference trajectories under uncertainty
& Numerical
& \begin{tabular}{l} 
SAA, \\
(period) ergodic\\
B\&B \\
heurstic search \end{tabular}\\
\hline
Switching-structure reduction
& Replace the full binary trajectory with lower-dimensional switching variables (counts $\underline z$ and (times $\bar z$)
& Numerical
& \begin{tabular}{l} Bilevel \\
and Black-Box\\
Optimization \\
 \end{tabular}\\
\hline
Offline learning
& Learn feasible regions, policy recommendations $\pi_\omega$, and continuous-control bucket classifier $\psi_\omega$ from high-fidelity simulations 
& Data-driven 
& \begin{tabular}{l} 
Reinforcement\\
\& Supervised \\
Learning \\ 
with latent \\ attraction \\ 
domains $\mathcal{D}(v^*)$ 
\end{tabular}\\
\hline
Catalog construction
& Reference 
planning catalog $\mathcal{C}_{\mathrm{off}}$ for for meso-layer warm starts and value approximation
& Hybrid
&  
\begin{tabular}{l}  
Supervised \\
Learning \\
HPC
\end{tabular}\\
\hline
\end{tabular}
\caption{Summary of offline-layer methods and their roles in constructing the reference catalog.}
\label{tab:offline_methods_summary}
\end{table}

\subsection{Formulations}
\subsubsection{Multistage Stochastic Programming}
Multistage Stochastic Programming (MSP) involves finding the solution of an optimization problem over a discrete-time stochastic process for a finite number of steps. One discretizes the stochasticity, typically through scenario generation. For a comprehensive monograph of scenario generation techniques applicable to the primary case study of interest, see, e.g.,~\cite{conejo2010decision}. A scenario is a sequence of samples $\{\xi_{i}(1),\xi_{i}(2),\cdots,\xi_{i}(T)\}$ for the realization of the noise over the $T$ subsequent time steps. Typically, the scenario is an expanding tree, i.e., for $\xi_{i,1}$ one recursively branches to construct subsequent realizations $\{\xi_{i+1}(1),\xi_{i+1}(2),\cdots,\xi_{i+1}(T)\}=\{\xi_{i}(1),\xi_{i+1}(2),\cdots,\xi_{i+1}(T)\}$. Thus, the infamous \emph{curse of dimensionality} applies relative to the time horizon $T$. However, at this offline layer of solution generation, it is expected that the computational load of an HPC server will be pushed to capacity as far as the longest time horizon and most precise stochastic discretization possible. 

Generically, we can construct an empirical sample average approximation to the risk functional $\mathcal{R}$; however, for simplicity of notation, assume that $\mathcal{R}=\mathbb{E}$, permitting the use of a simple sum expression. 

The MSP problem with discretized time and stochasticity now reads as,
\begin{equation}\label{eq:msp}
\begin{array}{rl}
\min\limits_{u,v,z} 
& \frac{1}{NT}\sum\limits_{i=1}^N\sum_{t=0}^T 
\left[-\Pi(u^{\xi_i}(t),d(\xi_i(t)))+C(v^{\xi_i}(t),z^{\xi_i}(t))\right] \\[0.5em]
\text{s.t. } 
& \tilde{e}(u^{\xi_i}(x,t),u^{\xi_i}(x,t+1))
= \tilde{f}(v^{\xi_i}(t),z^{\xi_i}(t)),\; t\in[T],\;\forall i \\ 
& v^{\xi_i}(t)\in\tilde{\mathcal{V}},\; t\in[T],\;\forall i \\
& S(u^{\xi_i}(t))\ge 0,\; t\in[T],\;\forall i \\
& u^{\xi_i}(0) = u_0,\;\forall i \\
& \left\{
\begin{array}{l}
u^{\xi_i}(t)=u^{\xi_j}(t),\;
v^{\xi_i}(t)=v^{\xi_j}(t),\;
z^{\xi_i}(t)=z^{\xi_j}(t),\;\forall t,\, i,j \\
\forall t'<t \text{ s.t. } \xi_i(t')=\xi_j(t')
\end{array}
\right.
\end{array}
\end{equation}
observe that the decisions and state are parametrized by both time and stochastic realization. This models the fact that a decision is made at time $t$. The last constraint, defined as the non-anticipativity constraint, requires non-clarvoyance, i.e., a decision (as well as state) at time $t$ cannot depend on noise realizations after $t$. 

The state constraint is transformed into a constraint that holds for all samples. Of course, this can guarantee only some probability of the almost sure constraint holding. In the meso layer we instead use a barrier function to enforce the state constraint, whereas here, we can consider that the stochastic discretization can be fine enough such that, with a small robustness factor distancing $S(\cdot)$ from the exact break down value of desired/safe operation, we can more or less presume the almost sure state constraint will be satisfied. 

Observe, however, that for the problem to be well-defined, an initial state $u_0$ must be set. Of course, in the general hydropower cascade operation, there is no offline initial state. Thus, one can simply take statistics of the history of the states through observations, and devote resources of computation towards solving~\eqref{eq:msp} as according to the distribution of various states being set as $u_0$.

\subsubsection{Ergodic Optimization}

In the case of ergodic optimization, one considers a stationary distribution that optimizes some long-run average over the history of the states. See, e.g.,~\cite{jenkinson2019ergodic} for a generic exposition. Notably, the exogenous noise in the applications of interest, including the standing example in this paper, can be considered to be multi-periodic. That is, precipitation and electricity demand tend to exhibit cyclical behavior, with periods spanning twenty-four hours, one week, and one year. Approaches towards explicitly considering this process structure are considered in~\cite{kungurtsev2025quasi}. Dynamic Programming, and a computational approach using Stochastic Dual Dynamic Programming, for linear dynamics is considered in~\cite{shapiro2020periodical}. 

In this case, problem~\eqref{eq:pdeoptdp} can be augmented with an additional set of periodicity constraints regarding the probability law of the solution:
\[
\mathcal{P}\left(u^{\xi}(x,t)\right)=\mathcal{P}\left(u^{\xi}(x,t+p)\right),\,\forall t
\]
for a single period $p$. With the multi-periodic, e.g., $\{p_1,p_2,p_3\}$, formalization, one can decompose the solution:
\[
u^{\xi}(x,t) = u^{\xi,1}(x,t)+u^{\xi,2}(x,t)+u^{\xi,3}(x,t)
\]
with
\[
\begin{array}{l}
\mathcal{P}\left(u^{\xi,1}(x,t)\right)=\mathcal{P}\left(u^{\xi,1}(x,t+p_1)\right),\,\mathcal{P}\left(u^{\xi,2}(x,t)\right)=\mathcal{P}\left(u^{\xi,2}(x,t+p_2)\right),\\ \qquad\qquad \mathcal{P}\left(u^{\xi,3}(x,t)\right)=\mathcal{P}\left(u^{\xi,3}(x,t+p_3)\right),\,\forall t.
\end{array}
\]
This formalism permits stochastic discretization techniques incorporating spectral and wavelet bases. 

\subsubsection{Hierarchical Discrete-Continuous Formulation}

In this section, we propose an alternative formulation of the decision variables that leverages the opportunity for HPC-enabled exhaustive computation for finding a global solution, or one that is approximately global. 

In the procedures above, the continuous and integer decision variables are treated to exhibit optionality at every time instant. That is, an operation of a dam could, if it were to be optimal to a criterion, generate frequent alternation between an open and a closed state. Of course, the operation of opening or closing a dam is an engineering task that incurs a cost. In other works on integer-valued time-parametrized control, e.g.,~\cite{leyffer2022sequential}, total variation regularization is used to minimize the switching frequency. For the problems of interest in this paper, we consider that we can evaluate a real economic cost to the operation, and thus the regularization is a real economic quantity, rather than a computational tool. 

We can go one step further, though, as far as considering an inductive bias towards less frequent changes in the binary and integer variables $z(t)$. To this end, we replace this decision variable with an equivalent depiction:
\begin{equation}\label{eq:depictint}
\begin{array}{l}
\underline{z}\in \mathbb{N}^{n_b}
= \left\{\underline{z}_i := \|z_i(t)\|_{TV}
:= \left\vert 
\left\{\underline{t}:\,
\lim\limits_{t\to \underline{t}^-} z(t)\neq 
\lim\limits_{t\to \underline{t}^+} z(t)
\right\}
\right\vert \right\} \\[0.5em]
\bar{z}(\underline{z}) \in \mathbb{R}^{n_b\times \|\underline{z}\|_1}
:= \left\{\bar{z}_{i,j}=\underline{t}:\,
\begin{array}{l}
t>\bar{z}_{i,j-1},\;
\lim\limits_{t\to \underline{t}^-} z(t)\neq 
\lim\limits_{t\to \underline{t}^+} z(t), \\
\bar{z}_{i,0}=0,\, j\le \underline{z}_i.
\end{array}
\right\}
\end{array}
\end{equation}
That is, decompose the problem into 1) the \emph{number of switches} $\underline{z}$ and 2) \emph{how much time to let elapse before a switch} $\bar{z}$. Observe that the dimensionality of the latter quantity depends on the value of the former.

This presents a natural hierarchical optimization procedure, whose formulation we simplify for brevity,
\begin{equation}\label{eq:offtwolayer}
\begin{array}{lr}
\min\limits_{\underline{z}} & \hat{\mathcal{J}}\left(u(\underline{z}),v(\underline{z}),\bar{z}(\underline{z})\right),\\
\text{s.t. } & v(\underline{z})\in\Argmin\limits_{v}\mathcal{J}(u,v,\bar{z}(\underline{z})),
\end{array}
\end{equation}
i.e., find the optimal number of switches for all dams across the total operation horizon, such that the time between switches is chosen in conjunction with the continuous control variables to maximize the total (expected or negative risk) profit. Each evaluation of the integer problem triggers a continuous variate optimization problem. This facilitates the use of off-the-shelf solvers, as constrained continuous optimization tools can be employed as simulation code from a function evaluation. The integer programs, which have a nonlinear black box criterion in the form of this evaluation, can then be solved using the usual branch-and-bound, heuristic, or Bayesian approaches.

\subsection{Numerical Mixed Integer Optimization Methods}

The offline layer has unrestricted access to the full PDE solver, enabling the use of computationally intensive mixed-integer approaches. In this environment, we can run Branch-and-Bound (B\&B), greedy and heuristic search strategies, and Black-Box or Derivative-Free Optimization (BBO/DFO) methods. These approaches explore the discrete decision space associated with dam operating modes and switching configurations. Because offline computation is unconstrained by real-time requirements, the objective is
not merely to find a single optimal solution but to assemble a diverse, high-quality catalog of reference trajectories that span relevant hydrological and operational regimes.

\subsubsection{B\&B}

Classical B\&B dates back to Land and Doig~\cite{Land1960};
see also Mitten~\cite{mitten1970branch} for a general formulation and properties.
B\&B is used in the offline layer as a structured mixed-integer search mechanism,
not as a new standalone algorithmic contribution. Its role is to explore moderate-dimensional
subproblems in which some binary switching variables are fixed or relaxed, and to generate
high-quality incumbent switching patterns for the reference catalog. At a node of the search
tree, a relaxed PDE-constrained subproblem may be solved with $z\in[0,1]$ or with a subset
of binary variables fixed. Nodes whose relaxation lower bound cannot improve the incumbent
are discarded, while promising nodes are further branched.

Because the number of switching configurations grows exponentially with the number of
binary variables and time stages, exhaustive B\&B is not intended for the full stochastic
PDE-constrained control problem. This restricted use is consistent with the MIPDECO
literature: direct discretization of PDE constraints can produce MILPs whose size makes
standard branch-and-bound or branch-and-cut solvers ineffective without additional
structure, preprocessing, or constraint management~\cite{Gnegel2021}. Related work also
emphasizes the use of state elimination and problem-structure reduction for mixed-integer
PDE-constrained optimal control~\cite{Thuenen2022}, as well as lower bounds and convex
relaxations for mixed-integer PDE-constrained problems as ingredients for global optimization
and branch-and-bound-type procedures~\cite{Leyffer2025}. In the proposed architecture,
B\&B is therefore used offline on restricted switching structures, shortened horizons, or reduced discrete subspaces. Even when global optimality is not certified, the procedure can
provide structured candidate trajectories that enrich the offline catalog and seed the
meso-scale particle layer.

\subsubsection{Other Offline Mixed-Integer Strategies}

{\bf Greedy and heuristic search methods} provide simple yet effective strategies for exploring
mixed-integer decision spaces. These approaches iteratively improve a candidate solution
through local modifications \cite{JohnsonPapadimitriouYannakakis1988} or rule-based
adjustments, including hill climbing \cite{RussellNorvig2010}, tabu search
\cite{Glover1986,Glover1989,Glover1990}, and a broad family of metaheuristics
\cite{Talbi2009}. Unlike globally oriented methods such as Branch-and-Bound, greedy and
heuristic schemes prioritize speed and coverage over optimality. In the offline setting,
they complement B\&B by rapidly generating diverse feasible switching configurations through
local search, greedy selection of switching variables, and trajectory-level perturbations
guided by PDE-based cost evaluations. Their primary role is to populate the reference catalog
with structurally varied high-quality solutions, rather than to provide global guarantees,
thereby seeding the meso-scale evolutionary layer with a broad range of integer patterns.

{\bf Black Box Optimization methods} form a broad family of algorithms designed for settings in which a function evaluation constitutes an expensive simulation. Included are Derivative-Free Optimization (DFO) methods for when
gradients are unavailable, unreliable, or prohibitively expensive. Recall that in the case study of interest, a priori reliable numerical adjoint calculation cannot be assured. Thus DFO methods can be used to poll and ultimately assess the existence and regularity of potential reference solutions. Classical DFO
approaches include pattern search \cite{Torczon1997}, mesh-adaptive direct search (MADS)
\cite{AudetDennis2006}, line-search variants \cite{SDBOX,VRBBO}, trust-region models
\cite{Conn2009,MATRS}, and surrogate-assisted schemes \cite{Regis2007}. Black-box and population-based methods such as genetic algorithms \cite{Holland1992},
mixed-variable variants of CMA-ES \cite{Hansen,MADFO}, and particle-swarm optimization
\cite{Kennedy2011} can be useful for exploratory search over nonsmooth or simulator-defined
objective functions when the search variables are reduced or parameterized.

In the present offline PDE-constrained setting, DFO/BBO methods are included as
auxiliary tools for reduced or parameterized search spaces, and also as possible initial
diagnostic tools when the simulator is available only as a black box. They are not intended
to solve the full space--time discretized PDE-constrained mixed-integer control problem
directly. A direct application of generic DFO/BBO to the full control trajectory $(v,z)$
would be computationally inappropriate, even in the offline layer, because the number of
required high-fidelity PDE evaluations would scale poorly with the dimension of the control
trajectory.

Instead, their main role is limited to low-dimensional supervisory or switching-structure
variables arising in the offline catalog-construction phase. For example, after the binary
control has been parameterized by the number of switches $\underline z$ and the switching
times $\bar z$, DFO/BBO methods may be used to explore switching counts, switching-time
patterns, policy parameters, or catalog-selection variables. In this case, each black-box
evaluation may call a high-fidelity PDE simulation or a structured continuous PDE-constrained
optimization subproblem, but the variables optimized by DFO/BBO remain low-dimensional.

A second restricted use case occurs at the beginning of offline exploration, when no
reliable adjoint, differentiability information, relaxation, or exploitable PDE structure is
yet available. In this case, DFO/BBO may be used as an initial black-box diagnostic step to
sample a limited number of candidate controls or parameterized policies, identify feasible
operating regimes, detect solver-failure regions, reveal promising switching structures, and
estimate useful scalings or bounds for later optimization. Once such information has been
obtained, the search should be transferred to reduced, parameterized, or structure-exploiting
PDE-constrained optimization methods rather than continued as an unrestricted black-box
search over the full trajectory.

This restricted use is particularly relevant when adjoints are unavailable, unreliable, or
not meaningful because of nonsmooth switching behavior, wetting/drying transitions, solver
failures, or changes in the numerical PDE solution regime. Thus, DFO/BBO contributes to
catalog diversification, feasibility-region exploration, and early black-box diagnosis, but it
is not used to provide global optimality certificates or to replace structure-exploiting
PDE-constrained optimization methods.

\subsubsection{Comparison of Offline Mixed-Integer Methods for PDE-Constrained Optimization}

The three offline strategies play different roles in constructing the reference catalog.
B\&B is the most systematic approach and is appropriate when the number of active binary
variables is moderate and useful relaxations are available. In such cases, it can provide
high-quality incumbent solutions and, in favorable settings, useful bound information.
Its limitation is the exponential growth of the search tree, especially when binary decisions
are repeated over many time stages or many dams.

Greedy and heuristic search methods have the opposite profile. They are inexpensive and
useful for rapidly producing many feasible switching configurations, but they provide no
optimality certificate and may become trapped in local patterns. Their primary value in the
offline layer is coverage: they generate diverse switching structures and candidate trajectories
that can later be refined, filtered, or used to initialize meso-scale computations.

DFO/BBO methods have two restricted roles. First, they may be used at the beginning
of offline exploration as black-box diagnostic tools when no reliable adjoint,
differentiability information, relaxation, or exploitable PDE structure is yet available.
In this role, they sample a limited number of candidate controls or parameterized policies
to identify feasible operating regimes, solver-failure regions, promising switching
structures, and useful scalings or bounds for later optimization. Second, after the search
space has been reduced or parameterized, they may be used for low-dimensional
switching-time optimization, exploration over the number of switches, policy parameters,
reduced supervisory variables, and catalog-selection variables. They are not intended as
direct solvers over the full space--time discretization of the PDE control problem. In
these restricted roles, DFO/BBO methods can contribute robustness to nonsmooth simulator
behavior and help enrich the catalog in regimes where derivative-based information is
unavailable or unreliable.

Thus, the offline methods are complementary not because they solve the same problem in
different ways, but because they populate the catalog through different mechanisms:
B\&B supplies structured mixed-integer search, greedy and heuristic methods supply rapid
coverage, and DFO/BBO supplies early black-box diagnosis together with reduced-dimensional
exploration of switching structures and simulator-defined regimes.

\subsection{Learning-based Approaches}

This subsection presents the learning mechanisms at the offline layer of the three-tier control architecture. At this level, the complete high-fidelity CFD solver is available and used directly to explore the feasible control space and learn probabilistic and policy structures for the PDE-constrained stochastic optimization problem. No model-order reduction or surrogate approximation is performed here; those tasks belong to the meso and real-time layers discussed later.

\paragraph{Role within the Three-Tier Control Principle.}
The offline layer serves as the computational foundation for the entire learning--control hierarchy. It (i) collects data from full CFD evaluations to characterize feasible and infeasible control configurations; (ii) trains classifiers that predict the existence of a stable numerical PDE solution; (iii) applies reinforcement learning to identify optimal discrete--continuous control combinations using CFD-based feedback; and (iv) estimates domains of attraction for continuous variables, enabling later discretization and policy transfer to the meso scale. In addition, the learned policy $\pi_{\phi^*}$ also serves as a recommender of promising offline catalog entries, guiding the meso layer toward high-value candidate trajectories that the learner has found to perform well under upstream stochastic conditions.

\paragraph{Offline Learning Objectives.}
Given the stochastic PDE system
\[
e(u^{\xi}) = f(v^{\xi},z^{\xi}), \qquad \xi \in \Xi,
\]
where $v$ denotes continuous and $z$ discrete control components, the offline learning process
pursues three principal objectives, described below.

\subsubsection{Predicting Feasibility via Supervised Learning}
The first objective is to construct a classifier
\[
\psi_{\omega}:\ (v,z,\xi)\mapsto [0,1]
\]
that approximates the probability of a stable PDE solution for each control--uncertainty triplet. For every CFD evaluation, a binary label $\chi_i\in\{0,1\}$ records solver convergence or failure. The resulting model $\psi_{\omega}$ provides a feasibility map that restricts later search and prevents infeasible simulations.
\subsubsection{Optimizing Discrete--Continuous Controls via Reinforcement Learning}

By using PDE simulations as the model in a reinforcement learning (RL) loop, we can aim to perform RL with the actions set as the decision variables and the computed objective as the reward. This requires parametrizing the decisions and rewards with deep neural networks, in the case of Deep Reinforcement Learning (DRL).

The second objective formulates the stochastic control problem as an RL environment in which each CFD evaluation corresponds to an episode.
For each feasible configuration $(v,z)$, the observed reward equals the negative risk-adjusted objective
\[
J(v,z)
=\mathbb{E}_{\xi}\!\left[
\mathcal{R}_{\xi}\!\big(-\Pi(u^{\xi}(v,z),d(\xi)) + C(v,z)\big)
\right].
\]
Two RL formulations are supported at the offline layer:
\begin{itemize}
    \item [(a)] \textbf{Bucketed-continuous formulation:} the continuous controls $v$ are discretized into coarse buckets, inducing a finite action set and converting the RL problem to a discrete-action MDP.
    \item [(b) ] \textbf{Discrete-action formulation:} the RL agent outputs only the discrete controls $z$, while the continuous component $v$ is obtained by solving the continuous PDE controlsubproblem with $z$ fixed. This matches the logic of the real-time layer.
\end{itemize}
The policy $\pi_{\phi}$ maps current estimates of the system or uncertainty to control actions $(v,z)$ (or just $z$ in the discrete-action formulation). RL updates improve $\pi_{\phi}$ over repeated high-fidelity evaluations, effectively learning decision rules for the mixed-integer PDE-constrained control problem.
\subsubsection{Estimating Domains of Attraction for Continuous Controls}
For each local minimizer $v^{*}$ obtained by PDE optimization, nearby sampled controls are clustered according to which minimizer they converge to. This yields characteristic regions $\mathcal{D}(v^{*})\subset\mathcal{V}$ representing domains of attraction.

These regions play two crucial roles: (1) they define coarse discrete lattices for continuous-control discretization at the meso scale,
and (2) they provide an optimal bucketing scheme for the RL formulation in which continuous variables are discretized. Each domain of attraction corresponds to a stable, behaviorally coherent bucket in the action space, leading to more effective RL training and more robust policy generalization. Both conceptually and operationally, they can be considered as latent variables that assist the overall solution catalog identification pipeline.

\paragraph{Offline Data Generation.}
A comprehensive dataset is assembled by sampling control configurations and stochastic realizations
\[
\mathcal{D}_{\mathrm{off}}
=\{(v_i,z_i,\xi_i,u_i,J_i,\chi_i)\}_{i=1}^{N_{\mathrm{off}}},
\]
where $\chi_i$ marks CFD feasibility, and $J_i$ is the computed objective. The dataset supports both supervised learning for $\psi_{\omega}$ and policy-gradient or Q-learning updates for $\pi_{\phi}$. Since full CFD runs are used, the offline layer provides a high-fidelity basis for all subsequent surrogate construction.

\paragraph{Offline Reinforcement Procedure.}
Algorithm~\ref{alg:offlineRL} summarizes the offline RL workflow consistent with the paper’s algorithmic structure. In \textbf{(S0$_{\ref{alg:offlineRL}}$)}, the policy $\pi_{\phi_0}$, the feasibility model $\psi_{\omega_0}$, and the dataset $\mathcal{D}_{\mathrm{off}}$ are initialized. Each episode begins with sampling $(v^{(k)},z^{(k)})$ and $\xi^{(k)}$, followed by running a high-fidelity CFD simulation. If the simulation converges, a risk-adjusted objective is computed, and the policy is updated; if it diverges, the feasibility model is updated instead. After $K_{\mathrm{off}}$ episodes, both models are retrained on the accumulated dataset to obtain the final offline policy $\pi_{\phi^{*}}$ and feasibility map $\psi_{\omega^{*}}$.

\begin{algorithm}[H]
\caption{Offline RL with High-Fidelity CFD Evaluations}
\label{alg:offlineRL}
\begin{algorithmic}
\Require CFD solver for $e(u)=f(v,z)$; initial policy parameters $\phi_0$; feasibility model $\psi_{\omega_0}$; learning rates $\eta_{\phi},\eta_{\omega}>0$; number of episodes $K_{\mathrm{off}}$
\Ensure Trained policy $\pi_{\phi^{*}}$ and feasibility predictor $\psi_{\omega^{*}}$
\Statex \textbf{(S0$_{\ref{alg:offlineRL}}$)} Initialize $\pi_{\phi_0}$ and $\psi_{\omega_0}$; set dataset $\mathcal{D}_{\mathrm{off}}=\emptyset$.
\For{$k=0,1,\ldots,K_{\mathrm{off}}-1$}
\Statex \spc \spc \textbf{(S1$_{\ref{alg:offlineRL}}$)} Sample candidate controls $(v^{(k)},z^{(k)})\sim\pi_{\phi_k}$ and stochastic 
\Statex \spc \spc realization $\xi^{(k)}\sim\tilde{\rho}(\xi)$.
\Statex \spc \spc \textbf{(S2$_{\ref{alg:offlineRL}}$)} Run high-fidelity CFD simulation $e(u^{\xi^{(k)}})=f(v^{(k)},z^{(k)})$.
\Statex \spc \spc \textbf{(S3$_{\ref{alg:offlineRL}}$)} Determine feasibility $\chi^{(k)}\in\{0,1\}$.
\If{$\chi^{(k)}=1$}
\Statex \spc \spc \spc \textbf{(S4$_{\ref{alg:offlineRL}}$)} Compute risk-adjusted objective $J^{(k)}$.
\Statex \spc \spc \spc \textbf{(S5$_{\ref{alg:offlineRL}}$)} Update policy: $\phi_{k+1}=\phi_k-\eta_{\phi}\,\nabla_{\phi}J^{(k)}$.
\Statex \spc \spc \spc \textbf{(S6$_{\ref{alg:offlineRL}}$)} Append $(v^{(k)},z^{(k)},\xi^{(k)},J^{(k)},\chi^{(k)})$ to $\mathcal{D}_{\mathrm{off}}$.
\Else
\Statex \spc \spc \spc \textbf{(S7$_{\ref{alg:offlineRL}}$)} Update feasibility predictor.
\EndIf
\EndFor
\Statex \textbf{(S8$_{\ref{alg:offlineRL}}$)} Train final models $\pi_{\phi^{*}}$ and $\psi_{\omega^{*}}$ on $\mathcal{D}_{\mathrm{off}}$.
\end{algorithmic}
\end{algorithm}

\paragraph{Offline Outputs and Interface.}
The offline stage provides: (i) the feasibility map $\psi_{\omega^{*}}$ defining regions of numerical solvability; (ii) a control policy $\pi_{\phi^{*}}$ together with a curated catalog of CFD-validated control solutions that the learner recommends for meso-layer initialization and replacement; and (iii) statistical information on attraction domains in $\mathcal{V}$, which define optimal bucketing schemes for continuous-action discretization in RL and the meso-scale surrogate model. These components form the input knowledge base for surrogate management at the meso layer and feedback control at the real-time layer.


\subsection{Catalog Construction}
The overall purpose of the offline phase of the computational control operation is to establish a catalog of candidate solutions. By leveraging the resources, in time and computation, that could be applied to solve the problems formulated in this Section, the limits of scale, accuracy, and precision could be reached, as far as available methods and hardware. 

Note that the \emph{initial} state and decision variable regime are inputs to all of the problems above, and the algorithms are meant to define the optimal sequence of decisions from that state in some forward horizon. Of course, in real-time operation, there is no initial state; the state continuously changes. And so the practical engineering initiation of Hydropower control must be agnostic with respect to this initial state. 

Thus, the appropriate course of action is to construct the catalog of solutions across a variety of initial states $u(x,t_0)$ and control orientations $\{z(t_0),v(x,t_0)\}$. Their choice can be defined based on data observed in the actual operation of the hydropower cascade. Through sensors measuring water height and velocity in river banks, while defining a database of historical control actions, a large dataset of initial conditions can be defined. 

Then, which initial conditions to use can be chosen through:
\begin{itemize}
    \item Sampling, with importance/Thompson sampling used to target rare events of critical importance to Water Engineering authorities
    \item Lattice grid construction across the discretized dimensionality of solutions, carefully tuning to the fact that this may differ across solution forms. 
\end{itemize}

The database of offline solutions can then serve as a useful tool for investigating particular scenarios of interest during the course of hydrocascade operation. Meanwhile, development can proceed as far as formulating, analyzing, and implementing the methods described below in the meso and real-time layers, while developing tools to integrate the offline solution catalog to be fed to the meso layer. 

\textbf{\emph{The critical importance of the component of using ML to learn the domain of the feasible control-to-state map must be emphasized. Given the lack of a priori knowledge and the lack of the availability of this information for the Shallow Water Equations, finding the appropriate set of control valued in some connected subdomain on which to perform optimization is critical. The use of ML permits navigating around the lack of a priori knowledge in this case, presenting a novel synergy of learning with operations research. }}

Given the nonconvexities in the optimization problem as well as the lack of a priori global regularity guarantees, the catalog of offline solutions will serve as valuable warm starts to the faster, ``meso'' level operation. By biasing warm starts to solutions that are in the interior of a region identified to exhibit a reliable control-to-state map, this provides a landscape over which optimization can take place without encountering unforeseen inoperability.

\subsection{Open Challenges and Future Research}

A number of novel methods have been formulated in this Section - specifically the branching-switching formulation and domain-of-attraction learning. As these are entirely new in the literature, to the best of the authors' understanding, research work formalizing them, studying their theoretical learning, convergence and robustness properties, and validating their performance numerically should be performed before including them into a complete water engineering operational pipeline. Other methods, including learning feasible and regular control-to-state regions of the control domain and the use of mixed-integer optimization tools, are applications of existing methods towards a significantly higher dimensional and nonlinear problem relative to conventional baselines. This stress-testing should be investigated numerically and, quite possibly, require additional algorithmic advances in the techniques recommended. 

\section{Meso Sequential Numerical Optimization}

Nonlinear Model Predictive Control (MPC) (e.g.~\cite{allgower2012nonlinear}) is a lookahead approach towards computationally solving Dynamic Programming by repeatedly solving an optimal control problem over a succeeding $T$ time steps. Sequentially, after the problem is solved, the decision corresponding to the first time step is implemented, then the state is evolved and measured, and the optimal control problem at the next time step with the horizon at the same length, thus extending one more time period, is set up and solved. With stochastic MPC, one performs the same operation; however, by solving a multistage stochastic program at each iteration~\cite{mesbah2016stochastic}. We will consider this as occurring in real meso-time, which is slower than the true real-time operation, but develops a steady stream of control sequences that feeds data into the real-time layer.

The incorporation of Terminal Costs or Terminal Constraints is critical for guaranteeing that an NMPC scheme exhibits stability and suboptimality guarantees with respect to the desired Dynamic Programming solution. To this end, the use of Approximate Dynamic Programming (ADP)~\cite{powell2007approximate} can provide a data-driven approach to define a terminal cost that well approximates the true value function at an ideal optimal policy. With ADP, one maintains an empirical model for the Terminal Cost Value function. In general, it can be difficult to develop a reliable scheme for computing such a function; however, the volume of information available across both the catalog of solutions as well as the real-time operation provides hope for being able to have sufficient data of enough scope to be able to properly inform this function.


\subsection{ADP-MPC Formulation}

Finally, the ADP program becomes:
\begin{equation}\label{eq:pdeoptadp}
    \begin{array}{rl}
\min\limits_{u,b,v,z} & \sum_{t=t_0}^{t_{N_T}} \mathcal{R}_{\xi(t)}\left(-\Pi(u^{\xi}(t),d(\xi(t)))+C(v^{\xi}(t),z^{\xi}(t))\right) \\ & \qquad +\mathcal{R}_{\xi(t_{1:N_{T}})}V\left(u^{\xi}(t_{N_T}), v^{\xi}(t_{N_T}),z^{\xi}(t_{N_T}),d(\xi,t_{N_T}),\cdots d(\xi,t_{N_T-\bar{\tau}_d})\right)\\
& \qquad - \beta_B \mathbb{E}\left[\log S(u^{\xi})\right]
\\
\text{s.t. } & e(u^{\xi}(x,t_{\tau})) = 0,\,x\in\Omega \\ 
&  b^{\xi}_{(l)}(x,t_{\tau})=B(v^{\xi}_{(j(l))},z^{\xi}_{(k(l))},t_{\tau}) ,\, x\in\partial \Omega_{(l)},\,\xi\,a.s.\\ 
& \mathcal{B}\left(u^{\xi}_{(r)}(x,t_{\tau}),u^{\xi}_{(r-1)}(x,t_{\tau}),b^{\xi}_{(l)}(x,t_{\tau})\right)=0,\,x\in \partial \Omega_{(l)},\,\xi\,a.s.  \\
& v^{\xi}\in\mathcal{V},\,\xi\,a.s.,
    \end{array}
\end{equation}
where $\mathcal{B}$ defines a generic (partial Dirichlet/Neumann/Robin) boundary condition operator.

The lookahead Approximate Dynamic Programming~\cite{powell2007approximate}, which can also be described as closed-loop Model Predictive Control (MPC) operation, proceeds by the real succession of the current time $\hat{t}$. At each time instant, the problem~\eqref{eq:pdeoptadp} is formed with $t_0=\hat{t}$, and computationally solved. The control corresponding to the initial time step $(\hat{v},\hat{z})$ is performed, and the system is allowed to evolve to the next time step, at which point this entire procedure is repeated.

\paragraph{Decision Variables}  Recall that generically, the controls include both discrete (open-close) and continuous (dam width/aperture) decisions. In the Meso time layer of computation, however, we consider the application of algorithms for (faster) continuous variate decisions. In particular the focus is to develop procedures that can use off the shelf Nonlinear Programming solvers. To this end, the discrete decisions are eliminated by the following heuristics:
\begin{enumerate}
    \item We maintain a set of solutions, each of which corresponds to specific values for the discrete variables. Specifically we consider a set of particles $\mathcal{P}$ which at time $\hat{t}$ defines a set of binary decisions as fixed from $t_0=\hat{t}$ to $t_{N_T}-1$,
\[
\left\{ z^{*,p}_{t}\right\},\,p\in\mathcal{P},t\in\{t_0,\cdots,t_{N_T}-1\}.
\]
    \item The final, receding horizon adjusted, time instance will have the decision variables defined as probability measures, that is, 
    \[
    z^p_{t_{N_T}}\in \mu(\{0,1\}),
    \]
    yielding a continuous optimization problem for all particles $p$.
    \item The total horizon length will be limited to facilitate maintaining a tractably small collection of optimization problems to solve at each time instant. That is, $N_T$ is not particularly large, less than ten. The cost-to-go is meant to define an estimate for the cost value of the system after time $N_T$. This is by design for the ADP approach for defining $V$ using data from both model simulations and real operation.
\end{enumerate}

From an application perspective, these procedures form the computational backbone supporting supervisory and real-time decision-making in hydraulic systems, where rapid and reliable control updates are required.

\subsection{Sequential QCQP (SQCQP) Method}

In this Section, we describe a Trust Region Filter Composite Step Sequential Quadratically Constrained Quadratic Programming method for solving stochastically discretized ADP problems. This is the most comprehensive novel algorithm presented in the manuscript, meant to circumnavigate the challenges in the proposed application. As it presents a contribution to mathematical programming in its own right, relative to other methods in this paper, we will describe the components with significantly greater detail. 

Because of the challenge of defining Lagrange multipliers described above, we incorporate the Filter technique (e.g.,~\cite{fletcher2006brief}) that obviates the need to track the progress in minimizing a Lagrangian penalty function, as well as use QCQP subproblems to define convex subproblems that incorporate second-order information of both the constraints and the objective function, thus harnessing their potential convergence benefits. Finally, the composite step framework, e.g.,~\cite{heinkenschloss2014matrix} obviates the challenge of potentially infeasible linearized constraints. The composite step Trust Region method first computes a direction that reduces the infeasibility measure, and then seeks to decrease the objective while maintaining the new predicted constraint value. 

A filter used to perform globalization involves maintaining the set:
\begin{equation}\label{eq:filter}
\begin{aligned}
\mathcal{F}:=\Big\{&
(u^{*,(1)},b^{*,(1)},v^{*,(1)},z^{*,(1)},f^{*,(1)},c^{*,(1)},\cdots, \\
&u^{*,(F)},b^{*,(F)},v^{*,(F)},z^{*,(F)},f^{*,(F)},c^{*,(F)})
\Big\}
\end{aligned}
\end{equation}
of points that are Pareto optimal with respect to infeasibility and objective value. 

We assume that there is a discretization of the probability space $(\Xi,\mathcal{B},\mathbb{P})$ denoted by a sample set $\xi_n$ with $n\in[N]$. These can constitute, in the standard case, Sample Average Approximation oriented Monte Carlo samples of $\Xi$ for the entire time interval $[0,T]$. We leave time continuous in the formulation, such that discretizing the optimality conditions of the subproblems can proceed with adaptive time-stepping. We will assume that within the trust region radius $\Delta$, the linear and the quadratic approximation to the PDE exhibit local regularity that permits a primal-dual solution for the subproblems. 

We proceed to now define the operations associated with each major iteration of the primary Algorithm, which we later state formally, discuss implementation details, and then derive some theoretical guarantees regarding convergence. 

At the start of each iteration $k$, consider that we have a current estimate $(u^{(k)}(\{\xi_n\}),b^{(k)}(\{\xi_n\}),v^{(k)}(\{\xi_n\}),z^{(k)}(\{\xi_n\}))$ and a trust region radius $\Delta_k$. We trivially consider that the non-anticipativity constraints are satisfied in the definitions of the estimates, and implicitly assume they are enforced throughout the definitions below.

\subsubsection{Normal and Tangential Step Computation}

Let:
\[
\begin{array}{l}
e^{(k)} := e(u^{(k)}(\xi_n)(x,t),t) \\
\mathcal{B}_{(r)}^{(k)} := \mathcal{B}(u^{(k)}_{(r)}(\xi_n)(x,t),u^{(k)}_{(r-1)}(\xi_n)(x,t),b_{(l)}^{(k)}(\xi_n)) \\
B_{(l)}^{(k)} := B(v_{(l)}^{(k)}(\xi_n)(x,t),z_{(l)}^{(k)}(\xi_n)(x,t){\color{magenta}.t}) \\
S^{(k)} := S(u^{(k)}(\xi_n)(x,t),t) 
\end{array}
\]
and their corresponding directional derivatives $e'^{(k)}d$ as applied to $d$, etc. Here we parametrize the control-to-state map $e$ and the state constraint $S$ by $t$ while considering the dams fixed. This can correspond, in the application of dam control, to changes in the precipitation and vessel traffic affecting the river flow $e$ and operational requirements at different times of day for $S$. 

The normal step $(\tilde{n}^{u,(k)}(\{\xi_n\}),\tilde{n}^{b,(k)}(\{\xi_n\}),\tilde{n}^{v,(k)}(\{\xi_n\}),\tilde{n}^{z,(k)}(\{\xi_n\}))$ is defined as the solution to:
\begin{equation}\label{eq:subptrnormal}
\begin{array}{rl}
\min\limits_{u,v,z} 
& \sum\limits_{n=1}^N\int_{0}^{T}
\left(
\int_{\Omega}
\big(e^{(k)}(\xi_n)
+e'^{(k)}(\xi_n)\tilde{n}^{u,(k)}(\xi_n)\big)^2
\,dx
\right)dt
\\
&\quad
+\sum\limits_{n=1}^N\int_{0}^{T}\int_{\partial\Omega}
\Big(
\mathcal{B}^{(k)}_{(r)}(\xi_n)
+\mathcal{B}'^{(k)}_{(r)}(\xi_n)
\\
&\qquad\qquad
\big(\tilde{n}^{u_{(r)},(k)}(\xi_n),
\tilde{n}^{u_{(r-1)},(k)}(\xi_n),
\tilde{n}_{(l)}^{b,(k)}(\xi_n)\big)
\Big)^2
\,dx\,dt
\\
&\quad
+\sum\limits_{n=1}^N\int_{0}^{T}\int_{\partial\Omega}
\Big(
b^{(k)}_{(l)}(\xi_n)
+\tilde{n}^{b,(k)}_{(l)}(\xi_n)
-B^{(k)}_{(l)}(\xi_n)
\\
&\qquad\qquad
-B'^{(k)}_{(l)}(\xi_n)
\big(
\tilde{n}^{v,(k)}_{(j(l))}(\xi_n),
\tilde{n}^{z,(k)}_{(k(l))}(\xi_n)
\big)
\Big)^2
\,dx\,dt
\\[0.5em]
\text{s.t. } 
& v^{(k)}(\xi_n)+\tilde{n}^{v,(k)}\in\mathcal{V},
\ \forall n
\\
& \left\|
\big(
\tilde{n}^{u,(k)}(\{\xi_n\}),
\tilde{n}^{b,(k)}(\{\xi_n\}),
\tilde{n}^{v,(k)}(\{\xi_n\}),
\tilde{n}^{z,(k)}(\{\xi_n\})
\big)
\right\|
\le \Delta_k.
\end{array}
\end{equation}
Now, in contrast to the classical variant, we not only compute the new candidate values for the constraints $e(u^{(k)}+\tilde{n}^{u,(k)})$, etc. (which are conventionally used for actual to predicted constraint infeasibility reduction comparison for adaptive adjustment of the trust region and step acceptance criteria), but the gradients and the Hessian (or practically, Hessian-vector product operators) $e'(u(k)+\tilde{n}^{u,(k)})[v]$ and $\nabla^2 e(u(k)+\tilde{n}^{u,(k)})[v]$.

Let us define
\begin{align*}
\tilde{e}^{(k)}(\xi_n) &:= e^{\xi_n}(u^{(k)}+\tilde{n}^{u,(k)}), \\ 
\tilde{e}^{',(k)}(\xi_n) &:= e^{',\xi_n}(u^{(k)}+\tilde{n}^{u,(k)}), \\
\tilde{e}^{'',(k)}(\xi_n) &:= e^{'',\xi_n}(u^{(k)}+\tilde{n}^{u,(k)})
\end{align*}
and similarly for the other constraints as well as the objective functions:
\[
\tilde{F}^{(k)}(\xi_n)=\{\tilde{\Pi}^{(k)}(\xi_n),\tilde{C}^{(k)}(\xi_n),\tilde{V}^{(k)}(\xi_n),\tilde{G}^{(k)}(\xi_n)\}
\]
and their derivatives, where $\tilde{G}=-\beta_B\log S(\cdot)$ defines the log barrier term, and the first two terms implicitly sum the quantities across $t\in\{t_0,\cdots, t_{N_T}\}$.

As in the offline optimization problem formulations, we take $\mathcal{R}=\mathbb{E}$ on account of the challenges of risk consistency and the notation complications of SAA as applied to risk measures. See~\cite{kouri2016risk} for a description of risk-averse PDE-constrained problems. Observe that with the regularity of~\eqref{eq:subptrnormal}, this problem can be solved by computing the adjoint, and subsequently perform (potential adaptive) discretization methods. For comprehensive description without excess volume, the continuous time formulation is defined for the normalized step, and the discretized time formulation of the tangential direction subproblem, defined below.

Let $\eta$ be a forcing factor.
The tangential direction $\tilde{t}^{(k)}=\{\tilde t^{u,(k)},\tilde t^{b,(k)},\tilde t^{v,(k)},\tilde t^{z,(k)}\}$ is obtained by solving a QCQP with constraints within new values and minimizing quadratic of objective:
\begin{equation}\label{eq:subptrtangential}
    \begin{array}{rl}
\min\limits_{u,v,z} & \frac{1}{N}\sum\limits_{n=1}^N \left[\tilde{F}^{',(k)}(\xi_n)\tilde{t}^{(k)}+\frac{1}{2}\left(\tilde{t}^{(k)}\right)^T\tilde{F}^{'',(k)}(\xi_n) \tilde{t}^{(k)}\right]
\\
\text{s.t. } & -\eta (e^{(k)}-\tilde{e}^{(k)})\le \tilde{e}^{',(k)} \tilde{t}^{u,(k)}+\frac{1}{2}\left(\tilde{t}^{u,(k)}\right)^T \tilde{e}^{'',(k)} \tilde{t}^{u,(k)} \le \eta (e^{(k)}-\tilde{e}^{(k)}),\,\forall \xi_n \\
& -\eta (\mathcal{B}^{(k)}-\tilde{\mathcal{B}}^{(k)})\le \tilde{\mathcal{B}}^{',(k)} \tilde{t}^{(k)}+\frac{1}{2}\left(\tilde{t}^{(k)}\right)^T \tilde{\mathcal{B}}^{'',(k)} \tilde{t}^{u,(k)} \le \eta (\mathcal{B}^{(k)}-\tilde{\mathcal{B}}^{(k)}),\,\forall \xi_n \\
& 
-\eta (B^{(k)}-\tilde{B}^{(k)}-\tilde{n}^{b,(k)})\le \\ & \qquad \tilde{B}^{',(k)} \tilde{t}^{(u,v,z),(k)}+\frac{1}{2}\left(\tilde{t}^{(u,v,z),(k)}\right)^T B^{'',(k)} \tilde{t}^{(u,v,z),(k)}+\tilde{t}^{b,(k)} \\ &\qquad\qquad\qquad\qquad  \le \eta (B^{(k)}-\tilde{B}^{(k)}-\tilde{n}^{b,(k)}) ,\,\forall \xi_n \\
\\ & v^{(k)}(\xi_n)+\tilde{n}^{v,(k)}+\tilde{t}^{v,(k)}\in\mathcal{V},\, \forall n \\
& \left\|\tilde{t}^{(k)}(\{\xi_n\})\right\|\le \Delta_k.
\end{array}
\end{equation}

\subsubsection{Filter and Trust Region Update}

Algorithm~\ref{alg:mesoSQCQP} summarizes the trust-region, filter-based
Sequential QCQP (SQCQP) procedure used at the meso scale to solve the
stochastically discretized ADP--MPC problem.  All mathematical objects
required by the algorithm---constraint residuals, their first- and
second-order derivatives, quadratic feasibility models, and the
quadratic model of the ADP objective---are defined in the preparatory
formulas preceding the algorithm.  

After computing the normal and tangential steps, the new objective and
constraint values are evaluated via
\[
\left\{
\begin{aligned}
&\frac{1}{N}\sum_{n=1}^N
F\!\left((u^{(k)},b^{(k)},v^{(k)},z^{(k)})
+ \tilde{n}^{(k)}+\tilde{t}^{(k)}\right), \\[4pt]
&\left\{
e\!\left((u^{(k)},b^{(k)},v^{(k)},z^{(k)})
+ \tilde{n}^{(k)}+\tilde{t}^{(k)}\right)
\right\},
\end{aligned}
\right.
\]
together with the corresponding boundary and state-constraint residuals.

To make the filter update precise, we first define the infeasibility
measure.  Let 
\[
x := (u,b,v,z)
\]
denote the full vector of state and control variables at a candidate
iterate, and let
\[
e(x),\qquad 
\mathcal B(x),\qquad
B(x)
\]
be, respectively, the PDE residual, the internal coupling/boundary
residual, and the control-induced boundary residual defined in the ADP--MPC
constraint system.  The state-inequality constraint is
\[
S(u) \ge 0,
\]
and we denote its negative part by
\[
S^{-}(u) := \max\{-S(u),\,0\}.
\]
Using these components, the infeasibility measure is defined by
\[
\phi(x) 
:= 
\|e(x)\|
+ \|\mathcal B(x)\|
+ \|B(x)\|
+ \|S^{-}(x)\|,
\]
where $\|\cdot\|$ denotes any norm consistent with the discretization
(e.g., an $\ell^2$ norm of the discrete residual vector or the
corresponding $L^2$ norm of its continuous representation).

A trial step produces a new point
\[
x_{\mathrm{trial}}
:= x^{(k)} + d^{(k)},
\qquad 
d^{(k)} := \tilde n^{(k)} + \tilde t^{(k)},
\]
with objective value
\[
F_{\mathrm{trial}}
:= F(x_{\mathrm{trial}})
\]
and infeasibility 
\[
\phi_{\mathrm{trial}}
:= \phi(x_{\mathrm{trial}}).
\]
The initial filter $\mathcal F_0$ is initialized with the objective and
infeasibility values of the initial iterate $x^{(0)}$. The filter at iteration $k$ is a set of ordered pairs
\[
\mathcal F_k 
= \{(F^{j},\phi^{j})\}_{j=1}^{m_k},
\]
each representing an objective--infeasibility pair from previously
accepted iterates.  The filter acceptance criterion used in 
(S4$_{\ref{alg:mesoSQCQP}}$) is:
\[
(F_{\mathrm{trial}},\phi_{\mathrm{trial}})
\ \text{is acceptable if for all } (F^{j},\phi^{j})\in \mathcal F_k:
\ \ 
F_{\mathrm{trial}} < F^{j}
\ \ \text{or} \ \
\phi_{\mathrm{trial}} < \phi^{j}.
\]
That is, the trial point must not be Pareto dominated by any entry in
the current filter.

If the trial point is acceptable, the filter is updated by
\[
\mathcal F_{k+1}
:= \bigl(\mathcal F_k \cup \{(F_{\mathrm{trial}},\phi_{\mathrm{trial}})\}\bigr)
\setminus 
\bigl\{(F^{j},\phi^{j})\in\mathcal F_k :
F_{\mathrm{trial}} \le F^{j}\ \text{and}\ 
\phi_{\mathrm{trial}} \le \phi^{j}
\bigr\}.
\]
Thus, any dominated points are removed, and the new point is added.  
If both the feasibility-only step and the composite step satisfy the
filter acceptance test, the algorithm may proceed in two parallel
branches or select one according to a heuristic.

To make the algorithm fully explicit, we recall the standard definitions
that govern the trust-region updates.  The \emph{predicted reduction}
associated with the combined step 
$d^{(k)} := \tilde n^{(k)} + \tilde t^{(k)}$ is the quadratic model
\[
\mathrm{pred}^{(k)}
:= -\Big(
   F'^{(k)} d^{(k)} 
   + \tfrac12\, d^{(k)\top} F''^{(k)} d^{(k)}
  \Big),
\]
while the \emph{actual reduction} is
\[
\mathrm{act}^{(k)}
:= F(x^{(k)}) - F\!\left(x^{(k)} + d^{(k)}\right).
\]
The trust-region ratio
\[
\rho^{(k)}
:= \frac{\mathrm{act}^{(k)}}{\mathrm{pred}^{(k)}}
\]
controls the radius update in
(S5$_{\ref{alg:mesoSQCQP}}$) as
\[
\Delta_{k+1} = 
\begin{cases}
\min\{\gamma_{\mathrm{inc}}\Delta_k,\;\Delta_{\max}\},
& \rho^{(k)} \ge \rho_{\mathrm{good}},\\[6pt]
\max\{\gamma_{\mathrm{dec}}\Delta_k,\;\Delta_{\min}\},
& \rho^{(k)} < \rho_{\mathrm{bad}},\\[6pt]
\Delta_k,
& \text{otherwise}.
\end{cases}
\]
where $0 < \rho_{\mathrm{bad}} < \rho_{\mathrm{good}} < 1$,  
\(0 < \gamma_{\mathrm{dec}} < 1 < \gamma_{\mathrm{inc}}\), and $\Delta_{\max}> \Delta_{\min}\in(0,1)$ are tuning parameters.

The iteration begins with the computation of the \emph{normal step} in
(S1$_{\ref{alg:mesoSQCQP}}$), which reduces constraint infeasibility by
solving a feasibility-oriented QCQP within the trust region.  Once the
feasibility-improving direction is obtained, the \emph{tangential
step} is computed in (S2$_{\ref{alg:mesoSQCQP}}$) by minimizing the
quadratic model of the objective while remaining in the approximate
feasible manifold generated by the normal step.  Here, the normal and tangential steps are defined by
\eqref{eq:subptrnormal} and \eqref{eq:subptrtangential}, respectively.
 The combined step is
tested against the filter in (S4$_{\ref{alg:mesoSQCQP}}$), and the
trust-region radius is updated in (S5$_{\ref{alg:mesoSQCQP}}$) based on
the ratio of actual to predicted reduction.  If insufficient progress
is detected and the radius becomes too small, a feasibility restoration
phase is triggered in (S6$_{\ref{alg:mesoSQCQP}}$), where only normal
steps are taken to recover feasibility.  The resulting framework
combines feasibility enforcement, second-order curvature information,
and global convergence through the filter, yielding a robust SQCQP
solver for the stochastic ADP--MPC problem.

\begin{algorithm}[H]
\caption{Sequential QCQP Filter Trust-Region Method for Meso-Scale ADP-MPC}
\label{alg:mesoSQCQP}
\begin{algorithmic}
\Require Initial iterate $x^{(0)}$, samples $\{\xi_n\}_{n=1}^N$, 
initial radius $\Delta_0$, filter $\mathcal{F}_0$, 
forcing factor $\eta$, trust-region update factors 
$(\gamma_{\mathrm{inc}},\gamma_{\mathrm{dec}})$, 
minimum and maximum radii $\Delta_{\min}$ and $\Delta_{\max}$, and user-prescribed tolerance $\varepsilon$ for constraint infeasibility.
\Ensure Sequence $\{x^{(k)}\}$ approaching a stationary point.

\Statex \textbf{(S0$_{\ref{alg:mesoSQCQP}}$)}  
Initialize $k=0$. 
Evaluate objective and constraint infeasibility at $x^{(0)}$ and insert into $\mathcal{F}_0$.

\While{infeasibility $>\varepsilon$}

    \Statex \spc \textbf{(S1$_{\ref{alg:mesoSQCQP}}$)}  
    Compute the \emph{normal step} $\tilde n^{(k)}$ using the feasibility QCQP.
    
    \Statex  \spc\textbf{(S2$_{\ref{alg:mesoSQCQP}}$)}  
    Compute the \emph{tangential step} $\tilde t^{(k)}$ using the quadratic objective  
    \State  \spc \spc \spc  model and feasibility windows.
    
    \Statex  \spc\textbf{(S3$_{\ref{alg:mesoSQCQP}}$)}  
    Form the trial point  
    $x^{(k)}_{\mathrm{trial}} = x^{(k)} + \tilde n^{(k)} + \tilde t^{(k)}$,  
    evaluate 
    \State \spc \spc  objective and infeasibility, and compute the actual/predicted \State \spc \spc reduction ratio $\rho^{(k)}$.

    \Statex  \spc\textbf{(S4$_{\ref{alg:mesoSQCQP}}$)}  
    Apply the \emph{filter acceptance test}.  
    If acceptable, set $x^{(k+1)} = x^{(k)}_{\mathrm{trial}}$ 
    \State \spc \spc and update $\mathcal{F}_{k+1}$.  
    Otherwise reject the step and keep $x^{(k+1)} = x^{(k)}$.

    \Statex  \spc\textbf{(S5$_{\ref{alg:mesoSQCQP}}$)}  
    Update the trust-region radius $\Delta_{k+1}$ using $\rho^{(k)}$ and 
    the standard \State \spc \spc expansion/shrinkage rules.

    \Statex  \spc \textbf{(S6$_{\ref{alg:mesoSQCQP}}$)}  
    If the step is rejected and $\Delta_{k+1}\le\Delta_{\min}$ for a prescribed
$\Delta_{\min}>0$, 
    \State \spc \spc
    invoke a  \emph{feasibility restoration} step (pure normal-step QCQP).

    \Statex  \spc\textbf{(S7$_{\ref{alg:mesoSQCQP}}$)}  
    Increase $k \leftarrow k+1$.

\EndWhile

\Statex \textbf{(S8$_{\ref{alg:mesoSQCQP}}$)}  
Return final iterate $x^{(k)}$.

\end{algorithmic}
\end{algorithm}

\paragraph{Feasibility Restoration Phase}

A standard feature of Filter methods is to incorporate a secondary backup feasibility restoration phase. This procedure is activated upon a sequence of steps yielding an unsatisfactory decline in the value of infeasibility, while the trust region is decreased to a small size, precluding significant escape from the infeasible region. In this case, a sequence of purely normal steps~\eqref{eq:subptrnormal} is computed, and feasibility is uniquely targeted for the trust region and filter update.

\paragraph{Computing Approximate Subproblem Solutions}

The subproblems are formed in function space with respect to $t$ and $x$. Given their structure, the Robinson Constraint Qualification holds for the two subproblems, and the necessary optimality conditions can be solved to obtain a solution. Due to convexity, there is one unique solution. Since the subproblem is a QCQP, the optimality conditions define a linear PDE. 

However, given the existence of state inequality constraints, even with linear problems, multipliers, which become Borel measures in the case of state constraints~\cite{hinze2009optimization}, again lose regularity. This presents an open research problem that needs to be addressed. Otherwise, one can pursue discretizing the subproblems directly, but this faces the usual issues of mesh-dependence and the physicality of solutions~\cite{schwedes2017mesh}. The standard approach to state constraints with PDE control includes a Moreau-Yosida regularization, that is, an $L^2$ penalty in the objective for the constraints~\cite{hintermuller2010pde}. This, however, adds another important parameter, the corresponding regularization multiplicative factor, that needs to be chosen. This complicates any potential convergence analysis, given multiple co-interacting approximations and discretizations, including the barrier term for the original state constraints and any potential spatial (finite difference/volume/element) discretization of the PDEs, as well as the chosen stochastic discretization in formulating the SAA problem.

\paragraph{Potential Theoretical Guarantees}

The method presented can functionally navigate around the problems associated with the challenge of the absence of well-defined Lagrange multipliers. A filter is a purely primal approach to enforce global convergence, that is, convergence to a stationary point for the original problem from any starting point. 

The use of QCQP permits the use of second-order information for accelerating convergence. This is known to be particularly useful in the case of PDE-constrained optimization, given the structure in the matrices defining the Newton iterations. 

However, a number of technical details need to be handled before convergence can be established with mathematical certainty. At the MPC-ADP level, the estimate of the Terminal cost should be an accurate-enough estimate of a value function in the Dynamic Programming formulation. Stability and Suboptimality would have to be established, possibly by applying turnpike methods~\cite{gugat2023turnpike}. For the optimization algorithm convergence guarantees for QCQP would have to be established, for which little is known in the literature (a recent and useful work is~\cite{gonzalez2025quadratic}). Finally, the barrier parameter considered as both theoretical weak convergence as well as, potentially, a concomitant scheme of adjusting the parameter, should be considered to properly understand the guarantees associated with the proposed algorithm.

\subsection{Open Challenges and Future Research}
This Section introduced a novel numerical optimization algorithm for performing Nonlinear Model Predictive Control within an Approximate Dynamic Programming framework for the meso-scale operation. The method is new in the literature, meant to circumnavigate the challenges of a control problem without a numerically suitable dual space together with uncertainty. As such a number of important details must be developed before it can be implemented and tested at scale. These include theoretical guarantees - including the stability and suboptimality guarantees needed for trustworthy model predictive control operation~\cite{allgower2012nonlinear} and convergence theory for the algorithm itself as far as generating iterates whose asymptotic limits solve the optimization problem. The numerical implementation, as far as the appropriate discretization used to even exhibit approximation properties for functions which can only be guaranteed to lie in a Colombeau Algebra, become the next significant hurdle. Altogether this would constitute a serious research program towards reliable nonlinear optimization tools for solving this class of problems, insofar as a meso-scale intermediary between high-fidelity HPC-guided simulations and real-time operation.

\section{Offline--Meso Integration}


This section develops the meso-scale evolution mechanism that refines the discrete--continuous control trajectories between the offline and meso-scale tiers. 
As emphasized earlier, the meso layer does not perform any mixed-integer or continuous optimization beyond the ADP problem that is solved for each particle. Integer programming (IP/MIP) is reserved for the offline layer, where long computation times are acceptable, and for the real-time layer via reduced-order, 
linearized models. The meso layer instead operates through a structured evolutionary procedure that 
acts on a population of discrete trajectories.

In practical water management systems, groundwater models are often coupled with surface hydrodynamics through exchange fluxes at riverbeds, reservoirs, or recharge zones. These coupled surface–subsurface systems require mass-conservative interface conditions and may be solved using either partitioned or monolithic strategies. In turn, this must take into account uncertainty in the exogenous evolution in quantities inflows, boundary conditions, and system response, and their impact on the selection and robustness of control actions.

\paragraph{Offline Catalog as Terminal Value Surrogate.}
The offline catalog $\mathcal{C}_{\mathrm{off}}$ consists of high-quality, 
PDE-validated control trajectories collected through Branch-and-Bound, greedy 
heuristics, DFO/BBO methods, and offline reinforcement learning.  
Each trajectory includes the full continuous-discrete control sequence, the 
associated PDE state evolution, and its realized cumulative cost.  
Thus $\mathcal{C}_{\mathrm{off}}$ provides a data-driven approximation of the
downstream cost-to-go that appears in the ADP terminal term of
\eqref{eq:pdeoptadp},
\[
V\!\left(u^{\xi}(t_{N_T}), v^{\xi}(t_{N_T}), z^{\xi}(t_{N_T}),
         d(\xi,t_{N_T}), \ldots\right).
\]

Let 
\[
x := \big(u^{\xi}(t_{N_T}), v^{\xi}(t_{N_T}), z^{\xi}(t_{N_T}),
         d(\xi,t_{N_T}),\ldots\big)
\]
denote the terminal augmented state.  
For each catalog entry $c\in\mathcal{C}_{\mathrm{off}}$, let 
$x^{(c)}_{\mathrm{terminal}}$ be its terminal state and 
$J^{(c)}_{\mathrm{tail}}$ its realized continuation cost.  
A nonparametric ADP terminal-value approximation takes the form
\[
V_{\mathrm{ADP}}(x)
\approx 
\min_{c\in\mathcal{C}_{\mathrm{off}}}
\Big(J^{(c)}_{\mathrm{tail}}
     +\omega(\|x-x^{(c)}_{\mathrm{terminal}}\|)\Big),
\]
where $\omega(\cdot)$ is a stabilization weight.  
Alternatively, a parametric surrogate
\[
V_{\theta}(x)=\mathrm{NN}_{\theta}(x)
\]
is trained on the terminal pairs 
$\big(x^{(c)}_{\mathrm{terminal}}, J^{(c)}_{\mathrm{tail}}\big)$.  
Thus, the offline catalog directly informs the ADP terminal cost, providing a 
computationally efficient proxy for the value function and stabilizing the 
rolling-horizon MPC behavior at the meso layer.

\paragraph{Particle Representation.}
Each particle corresponds to a mixed control trajectory for the next $T$ time 
steps.  The binary variables are fixed for the first $T-1$ steps and relaxed in the 
final step:
\[
z^{(p)}(t)\in\{0,1\},\quad t=\hat t_c,\ldots,\hat t_c+T-2,
\qquad 
z^{(p)}(\hat t_c+T-1)\in[0,1].
\]
This preserves discrete feasibility while allowing a low-dimensional continuous 
relaxation for the ADP solver.  
No additional optimization of $v$ or $z$ is performed at the meso scale.

\paragraph{Evolution Mechanism.}
Each particle $p$ is independently evaluated by solving its ADP subproblem, 
yielding a value estimate $J^{(p)}$.  
The meso evolution then proceeds as:
\begin{enumerate}
    \item[(i)] \textbf{ADP evaluation:} compute $J^{(p)}$ for each particle.
    \item[(ii)] \textbf{Selection:} rank particles by $J^{(p)}$ and identify underperformers.
    \item[(iii)] \textbf{Replacement:} inject high-quality trajectories from $\mathcal{C}_{\mathrm{off}}$.
    \item[(iv)] \textbf{Mutation:} flip 1--2 binary entries of $z^{(p)}$ 
              following the refined integer uniform sequence~\cite{MATRS}.
    \item[(v)] \textbf{Retention:} keep the best particles unchanged.
\end{enumerate}
This purely evolutionary mechanism maintains population diversity without invoking 
expensive mixed-integer or continuous optimization.

\paragraph{Meso-Layer Master Algorithm.}
Algorithm~\ref{alg:mesoparticlesrefined} summarizes the evolutionary procedure used to
propagate and refine the population of discrete--continuous control trajectories at the
meso scale.  
The algorithm operates on a fixed prediction window of length $T$ and treats each particle
as an independent candidate policy that is evaluated solely through its ADP value
$J^{(p)}$.  
Because the meso layer performs no discrete optimization of its own, the ADP solver
functions as the only source of objective information, and all population updates are driven
by the relative ordering of the ADP values.

Step (S0$_{\ref{alg:mesoparticlesrefined}}$) initializes the particle set either from the
offline catalog $\mathcal{C}_{\mathrm{off}}$ or by sampling from the learned policy
$\pi_{\phi^*}$, thereby ensuring that the initial population spans both historically optimal
patterns and recently learned, policy-guided structures.  
This initialization phase is critical: it seeds the particle ensemble with high-quality
discrete configurations that reflect the structure of the true dynamic programming solution.

In each iteration, the algorithm performs three core operations.  
First, step (S1$_{\ref{alg:mesoparticlesrefined}}$) performs a full ADP evaluation for every
particle. This means solving the reduced ADP optimization problem with the particle's
fixed discrete schedule $\{z^{(p)}(t)\}$, obtaining a continuous control refinement
$\{v^{(p)}(t)\}$ and a numerical value $J^{(p)}$.  
Because all particles can be evaluated in parallel, this stage constitutes the main
computational workload but scales efficiently on modern computing architectures.

Second, step (S2$_{\ref{alg:mesoparticlesrefined}}$) ranks particles according to their ADP
values. This creates a strictly ordered performance hierarchy that drives every subsequent
evolutionary action.  
Underperformers are flagged for replacement or mutation, while the top-ranked particles form
a stable “elite set” that anchors the distribution of the population.

Third, steps (S3$_{\ref{alg:mesoparticlesrefined}}$)-(S5$_{\ref{alg:mesoparticlesrefined}}$)
apply the evolutionary operators:
replacement injects high-quality discrete schedules from the offline catalog,
mutation introduces local variation by flipping one or two components of $z^{(p)}$, and
retention preserves the best-performing particles.  
This yields a balance between exploitation (preserving and refining strong discrete
structures) and exploration (introducing diversity).  
Mutations follow the refined integer uniform sequence~\cite{MATRS}, ensuring that variation
is spatially well-distributed across the binary vector rather than randomly clustered.

Step (S6$_{\ref{alg:mesoparticlesrefined}}$) performs periodic CFD validation of the best
particles.  
If the surrogate $\hat{\mathcal{S}}_{\theta}$ drifts significantly from the high-fidelity
PDE solver, the surrogate is retrained using the discrepant CFD snapshots.  
This maintains consistency between the meso layer’s ADP evaluations and the true PDE
dynamics without imposing CFD costs at every iteration.

Finally, after $K_{\mathrm{meso}}$ iterations, step (S7$_{\ref{alg:mesoparticlesrefined}}$)
returns the refined population and the single particle with the best value estimate
$J^{(p)}$.  
This best-performing discrete-continuous control pair
$(\hat v(\hat t_c),\hat z(\hat t_c))$ is passed directly to the real-time layer, where it is
linearized and used as the initial guess for rapid model-predictive optimization.

Overall, the algorithm functions as a dynamic bridge between the offline catalog and the
real-time MPC solver: it propagates discrete structural information forward in time,
injects offline knowledge when necessary, maintains population diversity through mutation,
and ensures surrogate fidelity through selective PDE validation.  
These properties make the meso layer a robust intermediate stage capable of tracking,
adapting, and refining discrete hydropower operating modes in fluctuating stochastic
regimes.

\begin{algorithm}[H]
	\caption{Meso-Scale Particle Evolution}
	\label{alg:mesoparticlesrefined}
	\begin{algorithmic}
		\Require Particle set $\mathcal{P}_{\hat t_c}$; ADP solver; 
		         offline catalog $\mathcal{C}_{\mathrm{off}}$; 
		         mutation rate $\lambda$; $K_{\mathrm{meso}}$
		\Ensure Updated population $\mathcal{P}_{\hat t_c+K_{\mathrm{meso}}}$ 
		        and the best control pair
		
		\Statex \textbf{(S0$_{\ref{alg:mesoparticlesrefined}}$)} 
		         Initialize $\mathcal{P}_{\hat t_c}$ from $\mathcal{C}_{\mathrm{off}}$ 
		         or policy $\pi_{\phi^*}$.
		
		\For{$k=0,1,\ldots,K_{\mathrm{meso}}-1$}
		
		    \For{each particle $p\in\mathcal{P}_{\hat t_c+k}$}
		        \Statex \spc\spc \textbf{(S1$_{\ref{alg:mesoparticlesrefined}}$)} 
		        Solve particle's ADP problem to obtain $J^{(p)}$.
		    \EndFor
		
		    \Statex \spc \textbf{(S2$_{\ref{alg:mesoparticlesrefined}}$)} 
		    Rank particles; identify underperformers.
		
		    \Statex \spc \textbf{(S3$_{\ref{alg:mesoparticlesrefined}}$)} 
		    Replace worst-performing particles with samples from $\mathcal{C}_{\mathrm{off}}$.
		
		    \Statex \spc \textbf{(S4$_{\ref{alg:mesoparticlesrefined}}$)} 
		    Mutate selected particles:
            \Statex ~ \spc \spc \spc \spc  flip 1--2 entries of $z^{(p)}$ 
                    following the uniform integer sequence.
		
		    \Statex \spc \textbf{(S5$_{\ref{alg:mesoparticlesrefined}}$)} 
		    Retain top-performing particles unchanged.
		
		    \Statex \spc \textbf{(S6$_{\ref{alg:mesoparticlesrefined}}$)} 
		    Periodically validate top particles with CFD and update surrogate.
		
		\EndFor
		
		\Statex \textbf{(S7$_{\ref{alg:mesoparticlesrefined}}$)} 
		Return updated population and best-performing particle.
	\end{algorithmic}
\end{algorithm}

\paragraph{Discussion on Mutation and Surrogate Feedback.}
No gradients or adjoint sensitivities are used at the meso-scale particle evolution master algorithm scale; the only signal guiding 
evolution is the ADP value estimate $J^{(p)}$.  
Binary mutation uses the refined integer sequence~\cite{MATRS} to ensure 
non-clustered, well-distributed flips.  
Solving the ADP problem is performed by applying off-the-shelf Nonlinear Programming solvers in parallel for all particles $p$. The meso layer then passes the best control pair $(\hat v(\hat t_c),\hat z(\hat t_c))$ to the real-time layer for linearization and execution. Meanwhile the information gained from the ADP solution computation is added in a database for learning  at both the Meso layer (for the terminal cost $V$) and the offline layer. 

\subsection{Open Challenges and Future Research}

There are a number of details to ensure the operation described in this Section is succesful. Notably, the optimal, or possibly satisficing set of optimal solution must change at a sufficiently slow latency, relative to the computationally required load. Conversely, the number of particles must evolve, through mutation to new solutions and selection of the leading solution to feed to the lower layer, with sufficiently frequent latency for the solution pathfollowing to proceed adequately.

\section{Fast Newton Solutions of Reduced Order Models at Real Time}

Ultimately, the practical goal of the problem(s) discussed in this paper is to perform operations in real-time, that is, sequential decisions that make changes within a short time interval. The shorter the time interval, the more quickly the system can adapt to changes in the environment. At the same time, short time intervals prevent the computation of complex and sophisticated procedures. In a real-time control context, coupled complex PDE systems must typically be approximated using surrogate models or reduced-order interface representations in order to maintain computational efficiency within operational time constraints.

In practical terms, for dam networks, this corresponds to constructing and selecting from a set of admissible control policies that encode feasible gate operations under varying hydrological conditions. These models are integrated with the meso layer by Model Order Reduction (MOR). MOR, as described in the well-known three-volume text~\cite{benner2017model}, is a comprehensive toolkit for obtaining simpler, e.g., linear models that approximate a more complicated nonlinear model with some degree of instrumental accuracy. The specific details of performing MOR for meso-to-real-time optimization would constitute a comprehensive research program, and in this paper, some general suggestions and options are presented, without claiming authority on their optimality.

\subsection{Real Time Control}

The implementation of MPC in real-time, that is, with short durations between measuring changes in the environment and performing the next control action, depends on fast computational procedures that take advantage of the fact that drastic changes in the environment typically only occur over long time intervals. A principal component is the preferential application of Newton methods. Notably, Newton iterations are quadratically convergent to a solution from starting points sufficiently close, and so if a method traces a trajectory of solutions while maintaining proximity within the radius of fast convergence, real-time operation can be truly fast~\cite{deuflhard2011newton}.

Within engineering systems applying real-time MPC, there are two standard paradigms: Real Time Iteration (RTI)~\cite{diehl2005real} and advanced step NMPC~\cite{huang2009advanced}. With RTI, a sequence of Newton steps is used directly to perform path following along the sequence of Nonlinear Programming problems defining the control for the receding horizon. The perturbation, beyond the increment in time, thus dropping one stage and adding another, includes state-model mismatch, i.e., the measured state replaces the initial state. With advanced-step MPC, 1) after a control is implemented, a new optimization problem is solved, using the previous model; 2) once the problem is solved, the real state is measured, and a corrector Newton step updates the next control. 

To account for active set changes during the procedure, predictor-corrector quadratic programs (QPs) are used to traverse these while maintaining Newton fast convergence. See, e.g.,~\cite{kungurtsev2014sequential,kungurtsev2017predictor}. Meanwhile, Filtering and Moving Horizon Estimation are optimization techniques that enable the simultaneous estimation, from real-time observations and measurements, of parameters and states~\cite{rawlings2006particle}.  

\subsection{Recent Advances in Mixed Integer Real Time Control}

Real-time MPC has classically been limited to continuous optimization. Integer and other combinatorial decision variables are fundamentally distinct from the smooth trajectories - states and parameters changing gradually over time - amenable to Newton methods. Instead, a change in the solution involves a discrete jump, for which, generically, fast computational methods become difficult to develop with demonstrable speed and robustness~\cite{guddat1990parametric}.   

Recently, the study of mixed integer optimization within the context of NMPC has begun to be addressed. Advances in the power of computational hardware have improved integer programming solvers to the point that problems of a dimension that is interesting and relevant to many industrial applications become within range for real-time numerical optimization-based control. For an early work in this space, see~\cite{kirches2011fast}. Extensions of RTI for mixed integer problems are explicitly developed in~\cite{de2020real,quirynen2021sequential}. 

Notably, fast computation becomes reasonable when a significant subset of the discrete decision variables is fixed, leaving a small number remaining for optimization (which can be found through a slower-time branch and bound procedure, in the references above, or through the meso-to-real-time operative integration, here). If $\mathcal{D}\subset [n_z]$ are the variables worth considering, and $\bar{e}$ is the linearized model, and $F_k$ is the total objective (i.e., including the future time over the fixed horizon), two available approaches are available.

One can solve a Mixed Integer Linear Program, wherein the discrete variable appears explicitly in the state model, which is otherwise linearized with respect to the continuous variables:
\[
\begin{array}{l}
\min\limits_{d_u,d_v,z_{\mathcal{D}}}\, \nabla_{u} F_k^T d_u
+\nabla_{v} F_k^T d_v \\
\text{s.t.   } u_k(t+1)+d_u(t+1) = \bar e_k(z)+\nabla_u\bar{f}_k (z,t)^T d_u+\nabla_u\bar{f}_k (z,t)^T d_v\\
\qquad\qquad\qquad -\Delta_k \le d_v \le \Delta_k, \,\,z_{\mathcal{D}}\in \mathbf{P}(\mathcal{D})
\end{array}
\]
where $\mathbf{P}(\mathcal{D})$ is the power set (set of all subsets) of $\mathcal{D}$.

In~\cite{quirynen2021sequential}, a mixed integer quadratic optimization problem is presented, with a similar two-time-scale approach with sequential linearization of the underlying linear model. The paper presents an approach and software for solving this set of problems, that is, of the form:
\[
\begin{array}{l}
\min\limits_{d_u,d_v,z_{\mathcal{D}}}\, \nabla_{u} F_k^T d_u
+\nabla_{v} F_k^T d_v +\frac{1}{2} \begin{pmatrix} d_u \\ d_v \end{pmatrix}^T H_k \begin{pmatrix} d_u \\ d_v \end{pmatrix}  \\
\text{s.t.   } u_k(t+1)+d_u(t+1) = \bar e_k(z)+\nabla_u\bar{f}_k (z,t)^T d_u++\nabla_u\bar{f}_k (z,t)^T d_v,
\end{array}
\]
where $H_k$ is an approximation to the Hessian of the Lagrangian with respect to the continuous variables. An efficient numerical procedure that can solve these in real time on embedded (that is, restricted computational capacity) hardware.

For a recent warm start procedure for considering stochasticity within the context of NMPC with MILPs, see~\cite{markhorst2024two}. 

\subsection{Piecewise Linear Approximation of the Shallow Water Equations} \label{sec:pwLinearApproxSWE}

A prominent approach to obtaining tractable approximations of the nonlinear Shallow Water equations relies on piecewise linear representations of the governing dynamics arising from semi-implicit space-time discretizations. In conservative form, the two-dimensional Shallow Water system reads
\begin{equation}
\begin{aligned}
\partial_t\eta + \nabla \cdot (\eta\mathbf{u}) &= 0, \\
\partial_t (\eta\mathbf{u}) + \nabla \cdot (\eta\mathbf{u} \otimes \mathbf{u}) + g\eta\nabla H &= \mathbf{S}(\eta,\mathbf{u}),
\end{aligned}
\label{eq:SWE_continuous}
\end{equation}
where $\eta$ denotes the water depth, $\mathbf{u}$ the depth-averaged velocity, $H = h + z_b$ the free-surface elevation relative to the bathymetry $z_b$, $g$ the gravitational acceleration, and $\mathbf{S}$ collects source terms such as bottom friction or Coriolis effects.

Within finite-volume or finite-difference frameworks, semi-implicit discretizations treat the gravity-driven pressure term implicitly while retaining an explicit or linearized treatment of the advective fluxes. Denoting by $\eta_i^{n}$ and $\mathbf{u}_i^{n}$ the cell-averaged unknowns at time level $t^n$, a typical semi-implicit update can be written schematically as
\begin{equation}
\begin{aligned}
\frac{\eta_i^{n+1} -\eta_i^{n}}{\Delta t}
+ \sum_{j \in \mathcal{N}(i)} F_{ij}^{n} &= 0, \\
\frac{(\eta\mathbf{u})_i^{n+1} - (\eta\mathbf{u})_i^{n}}{\Delta t}
+ \sum_{j \in \mathcal{N}(i)} \widehat{F}_{ij}^{n}
+ g\eta_i^{n+1} \nabla H_i^{n+1} &= \mathbf{S}_i^{n+1},
\end{aligned}
\label{eq:SWE_semiimplicit}
\end{equation}
where $F_{ij}^{n}$ and $\widehat{F}_{ij}^{n}$ denote numerical fluxes across cell interfaces and $\mathcal{N}(i)$ is the set of neighboring cells.

The key observation underlying the piecewise linear formulation is that nonlinearities associated with wetting-and-drying transitions, positivity constraints on $h$, and friction laws can be expressed through max-min operators or complementarity conditions. For instance, enforcing non-negativity (or other, more robust bound) of the water depth leads to relations of the form
\begin{equation}
H_i^{n+1} = \max\!\left\{0, \eta_i^{n+1}+z_b\right\}.
\label{eq:positivity}
\end{equation}
As a result, the fully discrete problem at each time step can be cast as a piecewise linear system
\begin{equation}
A(\boldsymbol{\sigma}) \, \mathbf{x} = \mathbf{b}(\boldsymbol{\sigma}),
\label{eq:PL_system}
\end{equation}
where $\mathbf{x}$ collects the discrete water levels and velocities, $\boldsymbol{\sigma}$ encodes the active set associated with wet and dry cells, and the matrices $A(\boldsymbol{\sigma})$ and vectors $\mathbf{b}(\boldsymbol{\sigma})$ depend linearly on this regime selection. 

The combination of equations across $\delta t$-iterative time steps from some initial $t=0$ to some final time $T=H\delta t$, together with any piece-wise kinks in the state, provide discretization knots. The system will be interpreted as exhibiting a linear interpolation between these knots, forming an optimization problem with many mixed integer linear constraints. 

For such systems, specialized iterative algorithms based on nested Newton or active-set strategies admit finite termination and strong robustness properties \cite{BrugnanoCasulli2008,BrugnanoCasulli2009}. Crucially, these formulations preserve essential structural features of the continuous equations, including exact mass conservation at the discrete level, unconditional stability with respect to the time step size, and a consistent and reliable treatment of wetting-and-drying interfaces, while substantially reducing the computational burden relative to fully nonlinear implicit schemes.

High-resolution extensions of these ideas to multidimensional free-surface hydrodynamics further demonstrate that accurate advection-diffusion balances can be maintained without sacrificing stability or monotonicity \cite{CASULLIZANOLLI2007}. In particular, the resulting schemes can be interpreted as regime-dependent linearizations embedded within a globally conservative finite-volume framework. More generally, nested Newton-type strategies for solving the associated nonsmooth algebraic problems provide a systematic mechanism for organizing such piecewise linear models within large-scale simulations \cite{CasulliZanolli2010}.

From the perspective of optimization under PDE constraints, the induced piecewise linear or piecewise affine structure of the discrete dynamics is particularly appealing, as it naturally aligns with mixed-integer, complementarity-based, and hybrid optimization formulations while retaining a close connection to the underlying physical processes. At the same time, the inherent nonsmoothness and switching behavior introduced by these approximations highlight fundamental analytical and computational challenges—such as limited regularity of control-to-state mappings, sensitivity to active-set changes, and the scalability of large-scale solvers—that motivate the broader discussion developed in this work.

\subsection{Open Challenges and Future Research}
Even with the Model Order Reduction discussed below, the dimensionality of the optimization problems defined presents significant challenges to existing mixed-integer solvers. Fortunately, since the problem functions and solutions are not expected to significantly change with the latency under consideration, this is an exercise in the numerical algebra of warm and hot starting. This presents an impetus to develop parametric mixed-integer optimization techniques and software~\cite{bank2021parametric}.



\section{Meso Real-Time Integration}

The Meso-Real-Time Integration layer provides the computational bridge between the 
nonlinear surrogate-augmented meso optimization and the low-latency Real-Time Iteration (RTI) controller. Its primary objective is to convert the nonlinear surrogate dynamics and the refined meso-level particle information into a model-order-reduced (MOR) and locally linearized representation. This representation is then passed to the RTI solver, which computes the actual real-time control action. The Meso-RT integration, therefore, focuses on \emph{model preparation}, not direct real-time optimization. Overall, the proposed framework can be interpreted, in the context of dam networks, as a unified approach to bridging long-term planning, supervisory updates, and real-time hydraulic control.

Throughout this section, $T$ denotes the finite receding prediction horizon
used by the meso and real-time layers, while $T_{\mathrm{final}}$ denotes the
(full) time horizon of the underlying time-dependent PDE-constrained
optimization problem used for conceptual analysis.

\paragraph{Role within the Three-Tier Architecture.}
From the meso layer, we receive the validated surrogate $\hat{\mathcal{S}}_{\theta}$, the refined particle ensemble $\mathcal{P}_{\hat{t}_c}$, and the best-performing local nominal trajectory. The Meso-RT interface uses this information to construct:
\begin{itemize}
    \item a reduced-order model (ROM) via classical or data-driven MOR,
    \item a local linearization of the ROM around the nominal meso trajectory,
    \item an initialization and warm-start guess for RTI.
\end{itemize}
The real-time controller then solves a reduced-dimension quadratic or linear MPC problem 
in a single RTI step. Thus, information flows \emph{from} meso-level adaptation 
\emph{into} real-time execution. The real-time operation will, at the same time, provide streaming data on the physical environment that is used for both meso and offline computational updates.

\paragraph{Operational Data Assimilation and Local Linearization.}
At each real-time instant $\hat{t}_c$, the following procedure updates the model used by RTI:
\begin{enumerate}
    \item Assimilate the measured hydrodynamic state $u^{\hat{\xi}}(\hat{t}_c)$ and update 
          the forecast distribution $\tilde{\rho}(\xi)$.
    \item Identify the best meso particle $(v^{*},z^{*})$ and its corresponding surrogate 
          trajectory.
    \item Construct a reduced-order model 
          \[
             \hat{u}_{\mathrm{ROM}}(t) = A_{\mathrm{ROM}}\,\hat{u}_{\mathrm{ROM}}(t)
             + B_{\mathrm{ROM}}\,v(t) + F_{\mathrm{ROM}} + \text{(small residual)},
          \]
          using MOR techniques (POD, operator inference, autoencoder-based ROM, etc.). Here, the terms $A_{\mathrm{ROM}}$ and $B_{\mathrm{ROM}}$ denote the reduced-order state and control matrices obtained from the MOR procedure, $F_{\mathrm{ROM}}$ denotes the affine forcing/bias term of the reduced model, and $\hat{u}_{\mathrm{ROM}}(t)$ denotes the reduced-order approximation of the hydrodynamic state at time~$t$.
    \item Linearize the ROM around the nominal meso particle:
          \[
             \delta \hat{u}_{t+1}
             = A_t\,\delta \hat{u}_t
             + B_t\,\delta v_t
             + f_t,
          \]
          producing the matrices needed for RTI. Here, the quantity $\delta\hat{u}_t$ denotes the deviation of the reduced state from the nominal meso trajectory, $\delta v_t$ is the deviation of the control input from its nominal value, $A_t$ and $B_t$ are the Jacobians of the ROM with respect to state and control evaluated at the nominal trajectory, and $f_t$ is the affine residual arising from the first-order linearization.
    \item Produce warm-start state and control guesses for RTI (not the final real-time control) using the meso particle and the state deviation 
          from the ROM.
\end{enumerate}

\paragraph{Meso-Real-Time Integration Algorithm.} The integration workflow is summarized below. Note that the algorithm prepares local models and initial guesses for RTI, but does not compute the control action itself; the RTI module handles that step.

\begin{algorithm}[H]
\caption{Meso-Real-Time Integration for Hierarchical Control}
\label{alg:mesort}
\begin{algorithmic}
    \Require Surrogate operator $\hat{\mathcal{S}}_{\theta}$; refined particles $\mathcal{P}_{\hat{t}_c}$;
             policy $\pi_{\phi^{*}}$; tolerance $\epsilon$
    \Ensure Linearized reduced-order model and warm-start for RTI
    \Statex \textbf{(S0$_{\ref{alg:mesort}}$)} Initialize time $t=\hat{t}_c$; load surrogate and meso particles.
    \While{$t < T$}
        \Statex \spc\spc \textbf{(S1$_{\ref{alg:mesort}}$)} Acquire measured state $u^{\hat{\xi}}(t)$ and update uncertainty.
        \Statex \spc\spc \textbf{(S2$_{\ref{alg:mesort}}$)} Select best-performing meso trajectory $(v^*,z^*)$.
        \Statex \spc\spc \textbf{(S3$_{\ref{alg:mesort}}$)} Construct MOR model and obtain ROM matrices.
        \Statex \spc\spc \textbf{(S4$_{\ref{alg:mesort}}$)} Linearize ROM around $(v^*,z^*)$, producing $(A_t,B_t,f_t)$.
        \Statex \spc\spc \textbf{(S5$_{\ref{alg:mesort}}$)} Generate a warm-start $(v_{\mathrm{init}},z_{\mathrm{init}})$ for RTI.
        \Statex \spc\spc \textbf{(S6$_{\ref{alg:mesort}}$)} Compare ROM prediction with measurement: 
        \Statex \spc\spc \spc\spc \spc If discrepancy $>\epsilon$, register data for surrogate retraining.
        \Statex \spc\spc \textbf{(S7$_{\ref{alg:mesort}}$)} Advance $t \leftarrow t+\Delta t$.
    \EndWhile
    \Statex \textbf{(S8$_{\ref{alg:mesort}}$)} Output linearized ROM and warm-start trajectory for RTI.
\end{algorithmic}
\end{algorithm}

The output of Algorithm~\ref{alg:mesort} provides the RTI layer with reduced matrices $(A_t,B_t,f_t)$, a warm-start control trajectory, and updated uncertainty-aware cost estimates. The RTI solver then performs a single SQP step to compute the executable real-time control. Thus, the Meso-RT integration is responsible for model preparation and adaptation, while RTI is responsible for fast optimal control computation.

Observe that the best-performing meso trajectory is expected to be drawn from memory frequently with real-time operation. This permits the consideration of asynchronous methods at the meso layer, i.e., wherein multiple parallel particle evolutions are themselves competing for the best solution or even asynchronous evolution strategies.

\paragraph{Connection to Parareal Real-Time PDE-Constrained Optimization.}
The Meso--Real-Time integration strategy is closely related, at a structural level,
to parareal time-domain decomposition methods for time-dependent
PDE-constrained optimization, as developed by Ulbrich within the real-time
optimization framework~\cite{inbookUlbrich2007}. In that setting, parareal methods are embedded into a generalized SQP algorithm to enable parallel-in-time solution of large-scale transient PDE optimization problems, while retaining the preparation--feedback separation that is central to real-time iteration (RTI) schemes.

In Ulbrich’s formulation, the time horizon $[0,T_{\mathrm{final}}]$ (in contrast to the receding horizon $T$
used by the meso--real-time layers), with
$T_{\mathrm{final}}>0$, is decomposed into subintervals $[T_n,T_{n+1}]$,
where $\{T_n\}_{n=0}^N$ denotes a partition of $[0,T_{\mathrm{final}}]$ satisfying
$T_0=0$, $T_N=T_{\mathrm{final}}$, and $\Delta T := T_{n+1}-T_n$.
The state equation is written in propagator form
\[
  u(T_{n+1}) = g(T_n,u(T_n)),
\]
where $u(t)$ denotes the PDE state, understood as an element of an appropriate
Banach or Hilbert space induced by the spatial discretization of the PDE, and
$g$ denotes the exact time-$\Delta T$ solution operator of the PDE.
A coarse approximation $g^{\Delta}$ of $g$ is assumed available, where
$g^{\Delta}$ denotes a computationally inexpensive coarse-grid approximation
of $g$, typically induced by a lower-order or more dissipative time
discretization.

The parareal update for the state at iteration $k\in\mathbb{N}$ then reads
\begin{equation}
  u^{k+1}(T_{n+1})
  =
  g^{\Delta}\!\left(u^{k+1}(T_n)\right)
  +
  g\!\left(u^{k}(T_n)\right)
  -
  g^{\Delta}\!\left(u^{k}(T_n)\right),
  \label{eq:parareal_ulbrich}
\end{equation}
which can be interpreted as a preconditioned fixed-point iteration for the
multiple-shooting formulation of the time-dependent PDE.
Ulbrich applies this correction simultaneously to the state and adjoint equations
inside a generalized SQP method, allowing inexact and user-provided solvers while
maintaining global convergence.

The Meso--Real-Time interface proposed here follows the same conceptual
separation of responsibilities as the parareal RTI framework, but replaces
adjoint-based correction by value-based and model-based coupling.
Specifically, the meso layer fulfills a role analogous to the coarse global
propagator in parareal--SQP: it provides a globally informed but low-latency
description of future system behavior through the refined particle ensemble
$\mathcal{P}_{\hat t_c}$, surrogate dynamics $\hat{\mathcal{S}}_{\theta}$, and
associated ADP value estimates.

At each control time $\hat t_c$, the Meso--RT layer performs a preparation step in
the precise sense of real-time iteration theory~\cite{Bock2007}.
Here, $t$ denotes the discrete real-time control index associated with the RTI
sampling instants.
The Meso--RT layer constructs a reduced-order model and a local linearization
around the nominal meso trajectory $(v^{*},z^{*})$, yielding
\[
  \delta \hat{u}_{t+1}
  =
  A_t\,\delta \hat{u}_t
  +
  B_t\,\delta v_t
  +
  f_t,
\]
which corresponds to the linearized dynamics required to assemble the quadratic
program solved in the RTI feedback step. The term $f_t$ collects affine residuals arising from the first-order linearization of the reduced-order model around the nominal trajectory. This mirrors the role of the fine propagator in parareal methods, which enforces local consistency with the high-fidelity PDE on short time intervals.

A key difference lies in the coupling mechanism. Whereas parareal--SQP enforces continuity across time subdomains through iterative state and adjoint corrections, the present architecture enforces temporal consistency implicitly through rolling-horizon execution, warm-started RTI, and periodic surrogate validation.
No adjoint sensitivities or second-order information are propagated from the
real-time layer back to the meso layer, reflecting the presence of mixed-integer
decisions, non-smooth dynamics, and real-time latency constraints.

From this perspective, the Meso--Real-Time Integration layer can be interpreted as
a value- and model-based analogue of parareal real-time PDE optimization:
global temporal structure is propagated forward via meso-level particle evolution
and ADP terminal value surrogates, while local PDE fidelity is recovered through
RTI-based linearized MPC at execution time.
This preserves the preparation--feedback philosophy and robustness properties of
classical RTI schemes, while extending parareal ideas to hybrid, stochastic, and
real-time constrained PDE control settings.

\paragraph{Parareal--SQP and 3-Time-Scale ADP.} While Ulbrich’s parareal--SQP framework provides a powerful approach for
time-dependent PDE-constrained optimization, it can not applied directly in the
present setting for several structural reasons.
Parareal--SQP relies on adjoint-based sensitivities, multiple-shooting continuity
corrections, and iterative convergence across the full time horizon, all of which
are incompatible with mixed-integer decisions, non-smooth switching dynamics,
and strict real-time latency constraints. In contrast, the proposed architecture deliberately separates the global structural adaptation from local real-time execution: the meso layer propagates discrete and value information forward in time without adjoints, while the real-time layer executes a single linearized SQP step using reduced-order models. This design preserves the preparation--feedback philosophy of real-time iteration schemes while avoiding the algorithmic overhead and regularity assumptions required by parareal--SQP. At the same time, the use of coarse and fine grid methods, while they need to be adapted to heterogeneous latency, and the MGOpt framework provide a useful set of tools to, for instance, connect the finest linear discretization at the real-time layer through to the coarsest nonlinear discretization at the meso layer, for maximal information efficiency.

\subsection{Open Challenges and Future Research}

The Model Order Reduction from the meso to the real time layer must be quite substantial. Meso layer solutions involve highly nonlinear solutions and must incorporate a long time horizon. At the real-time layer, the problems must be of sufficiently low dimension to be tractable, while at the same time exhibiting reasonable approximation to the real flow. The propagation of approximation and uncertainty accuracy in the real-time from the meso and from the high fidelity offline layer should be investigated theoretically for deriving guarantees of some bounds, while extensive numerical tests will need to be performed validating this accuracy and precision. 


\section{Discussion and Conclusion}

The thought exercise of considering what research and development are required to potentially address the mixed-integer optimization of hydropower cascades with river flow modeled by the shallow water equations, as well as their potential concrete implementation in water engineering, that is, with computational real-time considerations as well as uncertainty, presents a challenging problem in broad applied and computational mathematics. Any realistic implementation requires extensions along the state of the art across many relevant domains - along PDE theory, the calculus of variations, dynamic programming, approximate dynamic programming, model predictive control, uncertainty quantification, reduced order modeling, and scientific computing. Even more significantly, their \emph{integration}, that is, traversing and systematizing across domains and levels of abstraction in a manner that is theoretically sound and computationally tractable, presents uncharted interdisciplinary investigation. As such, this highlights the broad importance, in applied and computational mathematics in general, of the necessity of a systematized understanding of multiple components of the toolkit and their synergy, to continue to leverage the field of applied and computational mathematics for solving real-world problems and continue technological innovation into the future. This problem has, in its nonlinearity and complexity, features exhibited by many contemporary systems of interest, and this paper hopes to be an illustration for and impetus to greater elucidation of methodological interdisciplinary integration towards modeling and control.

\paragraph*{Acknowledgements}
V.K. and M.K. acknowledges funding support from the National Centre for Energy II (TN02000025). KK also acknowledges financial support from the Czech National Science Foundation under Project 24-11664S and M.K. acknowledges the financial support of the Austrian Science Foundation under \url{https://doi.org/10.55776/PAT2747625}. The authors would like to thank Sven Leyffer, Paul Manns, Wladimir Neves and Thomas Surowiec for valuable insight and discussions on the problem.

\end{sloppypar}

\bibliographystyle{plain}
\bibliography{refs}

\begin{thebibliography}{100}

\bibitem{abgrall2023new}
Remi Abgrall and Yongle Liu.
\newblock A new approach for designing well-balanced schemes for the shallow
  water.
\newblock {\em Appl. Math. Comput}, 5:370--402, 2023.

\bibitem{AllaBerardiSaluzzi2025}
A.~Alla, M.~Berardi, and L.~Saluzzi.
\newblock {State Dependent Riccati for dynamic boundary control to optimize
  irrigation in Richards’ equation framework}.
\newblock {\em Mathematics and Computers in Simulation}, 232:261--275, 2025.

\bibitem{allgower2012nonlinear}
Frank Allg{\"o}wer and Alex Zheng.
\newblock {\em Nonlinear model predictive control}, volume~26.
\newblock Birkh{\"a}user, 2012.

\bibitem{alt1983}
Hans~Wilhelm Alt and Stefan Luckhaus.
\newblock {Quasilinear Elliptic-Parabolic Differential Equations}.
\newblock {\em Mathematische Zeitschrift}, 183(3):311--341, 1983.

\bibitem{AudetDennis2006}
Charles Audet and J.~E. Dennis.
\newblock Mesh adaptive direct search algorithms for constrained optimization.
\newblock {\em SIAM Journal on Optimization}, 17(1):188–217, January 2006.

\bibitem{audusse2004}
Emmanuel Audusse, Fran\c{c}ois Bouchut, Marie-Odile Bristeau, Rupert Klein, and
  Beno\^{\i}t Perthame.
\newblock {A Fast and Stable Well-Balanced Scheme with Hydrostatic
  Reconstruction for Shallow Water Flows}.
\newblock {\em SIAM Journal on Scientific Computing}, 25(6):2050--2065, 2004.

\bibitem{bandeira2014convergence}
Afonso~S Bandeira, Katya Scheinberg, and Luis~Nunes Vicente.
\newblock Convergence of trust-region methods based on probabilistic models.
\newblock {\em SIAM Journal on Optimization}, 24(3):1238--1264, 2014.

\bibitem{bank2021parametric}
Bernd Bank and Reinhard Mandel.
\newblock {\em Parametric integer optimization}, volume~39.
\newblock Walter de Gruyter GmbH \& Co KG, 2021.

\bibitem{bear1972}
Jacob Bear.
\newblock {\em {Dynamics of Fluids in Porous Media}}.
\newblock Elsevier, New York, 1972.

\bibitem{Bear_Verruijt}
Jacob Bear and Arnold Verruijt.
\newblock {\em {Modeling Groundwater Flow and Pollution}}.
\newblock D. Reidel Publishing Company, 1987.

\bibitem{behrens2014real}
Malte Behrens, Hans~Georg Bock, Sebastian Engell, Phawitphorn Khobkhun, and
  Andreas Potschka.
\newblock Real-time pde constrained optimal control of a periodic
  multicomponent separation process.
\newblock {\em Trends in PDE Constrained Optimization}, pages 521--537, 2014.

\bibitem{benner2017model}
Peter Benner, Mario Ohlberger, Albert Cohen, and Karen Willcox.
\newblock {\em Model reduction and approximation: theory and algorithms}.
\newblock SIAM, 2017.

\bibitem{bergstrom2015interpretation}
Sten Bergstr{\"o}m and G{\"o}ran Lindstr{\"o}m.
\newblock Interpretation of runoff processes in hydrological
  modelling—experience from the hbv approach.
\newblock {\em Hydrological Processes}, 29(16):3535--3545, 2015.

\bibitem{biegler2007real}
Lorenz~T Biegler, Omar Ghattas, Matthias Heinkenschloss, David Keyes, and Bart
  van Bloemen~Waanders.
\newblock {\em {Real-time PDE-constrained Optimization}}.
\newblock SIAM, 2007.

\bibitem{Bock2007}
Hans~Georg Bock, Moritz Diehl, Ekaterina Kostina, and Johannes~P. Schl\"{o}der.
\newblock {\em 1. Constrained Optimal Feedback Control of Systems Governed by
  Large Differential Algebraic Equations}, page 3–24.
\newblock Society for Industrial and Applied Mathematics, January 2007.

\bibitem{Borzi2009}
A.~Borzì and G.~von Winckel.
\newblock Multigrid methods and sparse-grid collocation techniques for
  parabolic optimal control problems with random coefficients.
\newblock {\em SIAM Journal on Scientific Computing}, 31(3):2172--2192, January
  2009.

\bibitem{BrugnanoCasulli2008}
Luigi Brugnano and Vincenzo Casulli.
\newblock {Iterative Solution of Piecewise Linear Systems}.
\newblock {\em SIAM J. Scientific Computing}, 30:463--472, 01 2008.

\bibitem{BrugnanoCasulli2009}
Luigi Brugnano and Vincenzo Casulli.
\newblock {Iterative Solution of Piecewise Linear Systems and Applications to
  Flows in Porous Media}.
\newblock {\em SIAM Journal on Scientific Computing}, 31(3):1858--1873, 2009.

\bibitem{Busic2018}
Ana Bušić and Sean Meyn.
\newblock Ordinary differential equation methods for markov decision processes
  and application to kullback--leibler control cost.
\newblock {\em SIAM Journal on Control and Optimization}, 56(1):343--366,
  January 2018.

\bibitem{Cao2022}
Yanzhao Cao, Somak Das, and Hans‐Werner van Wyk.
\newblock Adaptive stochastic gradient descent for optimal control of parabolic
  equations with random parameters.
\newblock {\em Numerical Methods for Partial Differential Equations},
  38(6):2104--2122, February 2022.

\bibitem{carrera2005}
Jesus Carrera, Alvaro Alcolea, Agust{\'i} Medina, Julian Hidalgo, and Lucas
  Slooten.
\newblock {Inverse Problem in Hydrogeology}.
\newblock {\em Water Resources Research}, 41:W09424, 2005.

\bibitem{castro2019shallow}
Oscar Castro-Orgaz, Willi~H Hager, et~al.
\newblock {\em Shallow water hydraulics}.
\newblock Springer, 2019.

\bibitem{CASULLIZANOLLI2007}
Vincenzo Casulli and Paola Zanolli.
\newblock High resolution methods for multidimensional advection–diffusion
  problems in free-surface hydrodynamics.
\newblock {\em Ocean Modelling}, 10(1):137--151, 2005.
\newblock The Second International Workshop on Unstructured Mesh Numerical
  Modelling of Coastal, Shelf and Ocean Flows.

\bibitem{CasulliZanolli2010}
Vincenzo Casulli and Paola Zanolli.
\newblock {A Nested Newton-Type Algorithm for Finite Volume Methods Solving
  Richards' Equation in Mixed Form}.
\newblock {\em SIAM Journal on Scientific Computing}, 32(4):2255--2273, 2010.

\bibitem{celia1990}
Michael~A. Celia, Edward~T. Bouloutas, and Rebecca~L. Zarba.
\newblock {A General Mass-Conservative Numerical Solution for the Unsaturated
  Flow Equation}.
\newblock {\em Water Resources Research}, 26(7):1483--1496, 1990.

\bibitem{cheng1993tidal}
Ralph~T Cheng, Vincenzo Casulli, and Jeffrey~W Gartner.
\newblock Tidal, residual, intertidal mudflat (trim) model and its applications
  to san francisco bay, california.
\newblock {\em Estuarine, Coastal and Shelf Science}, 36(3):235--280, 1993.

\bibitem{colombeau2011elementary}
Jean~Fran{\c{c}}ois Colombeau.
\newblock {\em Elementary introduction to new generalized functions}, volume
  113.
\newblock Elsevier, 2011.

\bibitem{conejo2010decision}
Antonio~J Conejo, Miguel Carri{\'o}n, Juan~M Morales, et~al.
\newblock {\em Decision making under uncertainty in electricity markets},
  volume~1.
\newblock Springer, 2010.

\bibitem{Conn2009}
Andrew~R. Conn, Katya Scheinberg, and Luis~N. Vicente.
\newblock {\em Introduction to Derivative-Free Optimization}.
\newblock Society for Industrial and Applied Mathematics, January 2009.

\bibitem{de2020real}
Massimo De~Mauri, Wim Van~Roy, Joris Gillis, Jan Swevers, and Goele Pipeleers.
\newblock Real time iterations for mixed-integer model predictive control.
\newblock In {\em 2020 European Control Conference (ECC)}, pages 699--705.
  IEEE, 2020.

\bibitem{delaunay2017}
Marie Delaunay and Florin~A. Radu.
\newblock {On the Stability and Convergence of Linearization Schemes for
  Richards Equation}.
\newblock {\em SIAM Journal on Numerical Analysis}, 55(2):540--565, 2017.

\bibitem{deuflhard2011newton}
Peter Deuflhard.
\newblock {\em Newton methods for nonlinear problems: affine invariance and
  adaptive algorithms}, volume~35.
\newblock Springer Science \& Business Media, 2011.

\bibitem{castro2019}
Mar{\'\i}a J.~Castro D{\'\i}az, Alexander Kurganov, and Tom{\'a}s~Morales
  de~Luna.
\newblock {Path-Conservative Central-Upwind Schemes for Nonconservative
  Hyperbolic Systems}.
\newblock {\em ESAIM: Mathematical Modelling and Numerical Analysis},
  53(3):959--985, 2019.

\bibitem{diehl2005real}
Moritz Diehl, Hans~Georg Bock, and Johannes~P Schl{\"o}der.
\newblock A real-time iteration scheme for nonlinear optimization in optimal
  feedback control.
\newblock {\em SIAM Journal on control and optimization}, 43(5):1714--1736,
  2005.

\bibitem{Esmaili2023}
Sakine Esmaili, M.~R. Eslahchi, and Delfim F.~M. Torres.
\newblock Optimal control for a nonlinear stochastic pde model of cancer
  growth.
\newblock {\em Optimization}, 73(9):2745--2789, July 2023.

\bibitem{fei2023binary}
Xinyu Fei, Lucas~T Brady, Jeffrey Larson, Sven Leyffer, and Siqian Shen.
\newblock Binary control pulse optimization for quantum systems.
\newblock {\em Quantum}, 7:892, 2023.

\bibitem{fletcher2006brief}
Roger Fletcher, Sven Leyffer, Ph~L Toint, et~al.
\newblock A brief history of filter methods.
\newblock {\em Preprint ANL/MCS-P1372-0906, Argonne National Laboratory,
  Mathematics and Computer Science Division}, 36, 2006.

\bibitem{frederico2022calculus}
Gastao~SF Frederico, Paolo Giordano, Alexandr~A Bryzgalov, and Matheus~J Lazo.
\newblock Calculus of variations and optimal control for generalized functions.
\newblock {\em Nonlinear Analysis}, 216:112718, 2022.

\bibitem{garetto2005topological}
Claudia Garetto.
\newblock Topological structures in colombeau algebras: investigation of the
  duals of {$\mathcal{G}_c(\Omega)$}, {$\mathcal{G}(\Omega)$} and
  {$\mathcal{G}_\mathcal{S}(\R^n)$}.
\newblock {\em Monatshefte f{\"u}r Mathematik}, 146(3):203--226, 2005.

\bibitem{garmatter2022improved}
Dominik Garmatter, Margherita Porcelli, Francesco Rinaldi, and Martin Stoll.
\newblock {Improved penalty algorithm for mixed integer PDE constrained
  optimization problems}.
\newblock {\em Computers \& Mathematics with Applications}, 116:2--14, 2022.

\bibitem{GarreisSuro2021}
Sebastian Garreis, Thomas~M. Surowiec, and Michael Ulbrich.
\newblock {An Interior-Point Approach for Solving Risk-Averse PDE-Constrained
  Optimization Problems with Coherent Risk Measures}.
\newblock {\em SIAM Journal on Optimization}, 31(1):1--29, 2021.

\bibitem{geiersbach2021optimality}
Caroline Geiersbach and Winnifried Wollner.
\newblock Optimality conditions for convex stochastic optimization problems in
  banach spaces with almost sure state constraints.
\newblock {\em SIAM Journal on Optimization}, 31(4):2455--2480, 2021.

\bibitem{Glover1986}
Fred Glover.
\newblock Future paths for integer programming and links to artificial
  intelligence.
\newblock {\em Computers \& Operations Research}, 13(5):533--549, 1986.

\bibitem{Glover1989}
Fred Glover.
\newblock Tabu search—part i.
\newblock {\em ORSA Journal on Computing}, 1(3):190--206, 1989.

\bibitem{Glover1990}
Fred Glover.
\newblock Tabu search—part ii.
\newblock {\em ORSA Journal on Computing}, 2(1):4--32, 1990.

\bibitem{Gnegel2021}
Fabian Gnegel, Armin F{\"u}genschuh, Michael Hagel, Sven Leyffer, and Marcus
  Stiemer.
\newblock A solution framework for linear pde-constrained mixed-integer
  problems.
\newblock {\em Mathematical Programming}, 188(2):695--728, March 2021.

\bibitem{gonzalez2025quadratic}
Santiago Gonzalez~Zerbo, Alejandra Maestripieri, and Francisco
  Mart{\'\i}nez~Per{\'\i}a.
\newblock Quadratic programming with one quadratic constraint in hilbert
  spaces.
\newblock {\em Annals of Functional Analysis}, 16(2):31, 2025.

\bibitem{guddat1990parametric}
J{\"u}rgen Guddat, F~Guerra Vazquez, and Hubertus~Th Jongen.
\newblock {\em Parametric optimization: singularities, pathfollowing and
  jumps}.
\newblock Springer, 1990.

\bibitem{gugat2023turnpike}
Martin Gugat and Michael Herty.
\newblock Turnpike properties of optimal boundary control problems with random
  linear hyperbolic systems.
\newblock {\em ESAIM: Control, Optimisation and Calculus of Variations}, 29:55,
  2023.

\bibitem{Guth2024}
Philipp~A. Guth, Vesa Kaarnioja, Frances~Y. Kuo, Claudia Schillings, and Ian~H.
  Sloan.
\newblock {Parabolic PDE-constrained optimal control under uncertainty with
  entropic risk measure using quasi-Monte Carlo integration}.
\newblock {\em Numerische Mathematik}, 156(2):565--608, March 2024.

\bibitem{hahn2021decomposition}
Mirko Hahn, Christian Kirches, Paul Manns, Sebastian Sager, and Clemens Zeile.
\newblock {Decomposition and approximation for PDE-constrained mixed-integer
  optimal control}.
\newblock In {\em Non-Smooth and Complementarity-Based Distributed Parameter
  Systems: Simulation and Hierarchical Optimization}, pages 283--305. Springer,
  2021.

\bibitem{hahn2017mixed}
Mirko Hahn, Sven Leyffer, and Victor~M Zavala.
\newblock {Mixed-integer PDE-constrained optimal control of gas networks}.
\newblock {\em Mathematics and Computer Science}, 113, 2017.

\bibitem{hamann2015coordinated}
Andrew Hamann.
\newblock Coordinated predictive control of a hydropower cascade.
\newblock {\em Pittsburgh, Pennsylvania: Carnegie Mellon University}, 2015.

\bibitem{Hansen}
Nikolaus Hansen.
\newblock {\em The CMA Evolution Strategy: A Comparing Review}, pages 75--102.
\newblock Springer Berlin Heidelberg.

\bibitem{heinkenschloss2014matrix}
Matthias Heinkenschloss and Denis Ridzal.
\newblock A matrix-free trust-region sqp method for equality constrained
  optimization.
\newblock {\em SIAM Journal on Optimization}, 24(3):1507--1541, 2014.

\bibitem{helmig1997}
Rainer Helmig.
\newblock {\em {Multiphase Flow and Transport Processes in the Subsurface: A
  Contribution to the Modeling of Hydrosystems}}.
\newblock Springer, Berlin, 1997.

\bibitem{hintermuller2010pde}
Michael Hinterm{\"u}ller and Karl Kunisch.
\newblock Pde-constrained optimization subject to pointwise constraints on the
  control, the state, and its derivative.
\newblock {\em SIAM Journal on Optimization}, 20(3):1133--1156, 2010.

\bibitem{hinze2009optimization}
Michael Hinze, Rene Pinnau, Michael Ulbrich, Stefan Ulbrich, and Michael
  Ulbrich.
\newblock Optimization methods in banach spaces.
\newblock {\em Optimization with PDE Constraints}, pages 97--156, 2009.

\bibitem{Holland1992}
John~H. Holland.
\newblock {\em Adaptation in Natural and Artificial Systems: An Introductory
  Analysis with Applications to Biology, Control, and Artificial Intelligence}.
\newblock The MIT Press, April 1992.

\bibitem{huang2009advanced}
Rui Huang, Victor~M Zavala, and Lorenz~T Biegler.
\newblock Advanced step nonlinear model predictive control for air separation
  units.
\newblock {\em Journal of Process Control}, 19(4):678--685, 2009.

\bibitem{jenkinson2019ergodic}
Oliver Jenkinson.
\newblock Ergodic optimization in dynamical systems.
\newblock {\em Ergodic Theory and Dynamical Systems}, 39(10):2593--2618, 2019.

\bibitem{JohnsonPapadimitriouYannakakis1988}
David~S. Johnson, Christos~H. Papadimitriou, and Mihalis Yannakakis.
\newblock How easy is local search?
\newblock {\em Journal of Computer and System Sciences}, 37(1):79--100, 1988.

\bibitem{Kennedy2011}
James Kennedy.
\newblock {\em Particle Swarm Optimization}, page 760–766.
\newblock Springer US, 2011.

\bibitem{VRBBO}
Morteza Kimiaei and Arnold Neumaier.
\newblock Efficient unconstrained black box optimization.
\newblock {\em Mathematical Programming Computation}, 14(2):365–414, February
  2022.

\bibitem{MADFO}
Morteza Kimiaei and Arnold Neumaier.
\newblock Effective matrix adaptation strategy for noisy derivative-free
  optimization.
\newblock {\em Mathematical Programming Computation}, 16(3):459–501, July
  2024.

\bibitem{MATRS}
Morteza Kimiaei and Arnold Neumaier.
\newblock {MATRS}: heuristic methods for noisy derivative-free
  bound-constrained mixed-integer optimization.
\newblock {\em Mathematical Programming Computation}, 17(3):505–546, May
  2025.

\bibitem{kirches2011fast}
Christian Kirches.
\newblock {\em Fast numerical methods for mixed-integer nonlinear
  model-predictive control}.
\newblock Springer, 2011.

\bibitem{konjik2008foundations}
Sanja Konjik, Michael Kunzinger, and Michael Oberguggenberger.
\newblock Foundations of the calculus of variations in generalized function
  algebras.
\newblock {\em Acta Applicandae Mathematicae}, 103(2):169--199, 2008.

\bibitem{kouri2013trust}
Drew~P Kouri, Matthias Heinkenschloss, Denis Ridzal, and Bart~G van
  Bloemen~Waanders.
\newblock A trust-region algorithm with adaptive stochastic collocation for pde
  optimization under uncertainty.
\newblock {\em SIAM Journal on Scientific Computing}, 35(4):A1847--A1879, 2013.

\bibitem{kouri2014inexact}
Drew~P Kouri, Matthias Heinkenschloss, Denis Ridzal, and Bart~G van
  Bloemen~Waanders.
\newblock {Inexact objective function evaluations in a trust-region algorithm
  for PDE-constrained optimization under uncertainty}.
\newblock {\em SIAM Journal on Scientific Computing}, 36(6):A3011--A3029, 2014.

\bibitem{kouri2023relaxation}
Drew~P. Kouri, Mathias Staudigl, and Thomas~M. Surowiec.
\newblock {A relaxation-based probabilistic approach for PDE-constrained
  optimization under uncertainty with pointwise state constraints}.
\newblock {\em Computational Optimization and Applications}, 85(2):441--478,
  February 2023.

\bibitem{kouri2016risk}
Drew~P Kouri and Thomas~M Surowiec.
\newblock {Risk-averse PDE-constrained optimization using the conditional
  value-at-risk}.
\newblock {\em SIAM Journal on Optimization}, 26(1):365--396, 2016.

\bibitem{kungurtsev2025quasi}
Vyacheslav Kungurtsev.
\newblock Quasi-ergodic control of multi-periodic autoregressive processes:
  Formulation and examples.
\newblock {\em arXiv preprint arXiv:2509.24729}, 2025.

\bibitem{kungurtsev2014sequential}
Vyacheslav Kungurtsev and Moritz Diehl.
\newblock Sequential quadratic programming methods for parametric nonlinear
  optimization.
\newblock {\em Computational Optimization and Applications}, 59(3):475--509,
  2014.

\bibitem{kungurtsev2017predictor}
Vyacheslav Kungurtsev and Johannes Jaschke.
\newblock A predictor-corrector path-following algorithm for dual-degenerate
  parametric optimization problems.
\newblock {\em SIAM Journal on Optimization}, 27(1):538--564, 2017.

\bibitem{kurganov2007}
Alexander Kurganov and Guergana Petrova.
\newblock {A Second-Order Well-Balanced Positivity-Preserving Central-Upwind
  Scheme for the Saint--Venant System}.
\newblock {\em Communications in Mathematical Sciences}, 5(1):133--160, 2007.

\bibitem{Land1960}
Ailsa~H. Land and Alison~G. Doig.
\newblock An automatic method of solving discrete programming problems.
\newblock {\em Econometrica}, 28(3):497--520, 1960.

\bibitem{LeVeque_2002}
Randall~J. LeVeque.
\newblock {\em {Finite Volume Methods for Hyperbolic Problems}}.
\newblock Cambridge Texts in Applied Mathematics. Cambridge University Press,
  2002.

\bibitem{leyffer2022sequential}
Sven Leyffer and Paul Manns.
\newblock Sequential linear integer programming for integer optimal control
  with total variation regularization.
\newblock {\em ESAIM: Control, Optimisation and Calculus of Variations}, 28:66,
  2022.

\bibitem{Leyffer2025}
Sven Leyffer and Paul Manns.
\newblock Mccormick envelopes in mixed-integer pde-constrained optimization.
\newblock {\em Mathematical Programming}, January 2025.

\bibitem{liu2011existence}
Wei Liu.
\newblock Existence and uniqueness of solutions to nonlinear evolution
  equations with locally monotone operators.
\newblock {\em Nonlinear Analysis: Theory, Methods \& Applications},
  74(18):7543--7561, 2011.

\bibitem{SDBOX}
Stefano Lucidi and Marco Sciandrone.
\newblock A derivative-free algorithm for bound constrained optimization.
\newblock {\em Computational Optimization and Applications}, 21(2):119–142,
  February 2002.

\bibitem{Berardi_Difonzo_Guglielmi_2023}
Berardi M., Difonzo~F. V., and Guglielmi R.
\newblock A preliminary model for optimal control of moisture content in
  unsaturated soils.
\newblock {\em Computational Geosciences}, 27:1133--1144, 2023.

\bibitem{markhorst2024two}
Berend Markhorst, Markus Leitner, Joost Berkhout, Alessandro Zocca, and Rob
  van~der Mei.
\newblock A two-step warm start method used for solving large-scale stochastic
  mixed-integer problems.
\newblock {\em arXiv preprint arXiv:2412.10098}, 2024.

\bibitem{martin2021complexity}
Matthieu Martin, Sebastian Krumscheid, and Fabio Nobile.
\newblock Complexity analysis of stochastic gradient methods for
  pde-constrained optimal control problems with uncertain parameters.
\newblock {\em ESAIM: Mathematical Modelling and Numerical Analysis},
  55(4):1599--1633, 2021.

\bibitem{mesbah2016stochastic}
Ali Mesbah.
\newblock Stochastic model predictive control: An overview and perspectives for
  future research.
\newblock {\em IEEE Control Systems Magazine}, 36(6):30--44, 2016.

\bibitem{Milz2023}
Johannes Milz.
\newblock Consistency of monte carlo estimators for risk-neutral
  {PDE}-constrained optimization.
\newblock {\em Applied Mathematics Optimization}, 87(3), April 2023.

\bibitem{mitten1970branch}
LG~Mitten.
\newblock Branch-and-bound methods: General formulation and properties.
\newblock {\em Operations research}, 18(1):24--34, 1970.

\bibitem{Oberguggenberger1987}
Michael Oberguggenberger.
\newblock Generalized solutions to semilinear hyperbolic systems.
\newblock {\em Monatshefte f{\"u}r Mathematik}, 103(2):133--144, Jun 1987.

\bibitem{paniconi2015physically}
Claudio Paniconi and Mario Putti.
\newblock Physically based modeling in catchment hydrology at 50: Survey and
  outlook.
\newblock {\em Water Resources Research}, 51(9):7090--7129, 2015.

\bibitem{pares2006}
Carlos Par{\'e}s.
\newblock {Numerical Methods for Nonconservative Hyperbolic Systems: A
  Theoretical Framework}.
\newblock {\em SIAM Journal on Numerical Analysis}, 44(1):300--321, 2006.

\bibitem{peng1992stochastic}
Shige Peng.
\newblock Stochastic hamilton--jacobi--bellman equations.
\newblock {\em SIAM Journal on Control and Optimization}, 30(2):284--304, 1992.

\bibitem{pfaff2015optimal}
Sebastian Pfaff and Stefan Ulbrich.
\newblock Optimal boundary control of nonlinear hyperbolic conservation laws
  with switched boundary data.
\newblock {\em SIAM Journal on Control and Optimization}, 53(3):1250--1277,
  2015.

\bibitem{PfaffUlbrichTLP}
Sebastian Pfaff and Stefan Ulbrich.
\newblock Optimal control of nonlinear hyperbolic conservation laws by on/off
  switching with an application to traffic flow modeling, 2015.

\bibitem{Sulb2016}
Sebastian Pfaff and Stefan Ulbrich.
\newblock Optimal control of nonlinear hyperbolic conservation laws by
  on/off-switching.
\newblock Technical report, Department of Mathematics, TU Darmstadt, 2016.

\bibitem{pichler2022risk}
Alois Pichler, Rui~Peng Liu, and Alexander Shapiro.
\newblock Risk-averse stochastic programming: Time consistency and optimal
  stopping.
\newblock {\em Operations Research}, 70(4):2439--2455, 2022.

\bibitem{powell2007approximate}
Warren~B Powell.
\newblock {\em Approximate Dynamic Programming: Solving the curses of
  dimensionality}, volume 703.
\newblock John Wiley \& Sons, 2007.

\bibitem{Poznyak2023}
Alexander Poznyak, Sebastian Noriega-Marquez, Alejandra Hernandez-Sanchez,
  Mariana Ballesteros-Escamilla, and Isaac Chairez.
\newblock Min–max dynamic programming control for systems with uncertain
  mathematical models via differential neural network bellman’s function
  approximation.
\newblock {\em Mathematics}, 11(5):1211, March 2023.

\bibitem{pudjaprasetya2014momentum}
Sri~Redjeki Pudjaprasetya and Ikha Magdalena.
\newblock Momentum conservative schemes for shallow water flows.
\newblock {\em East Asian Journal on Applied Mathematics}, 4(2):152--165, 2014.

\bibitem{quirynen2021sequential}
Rien Quirynen and Stefano Di~Cairano.
\newblock Sequential quadratic programming algorithm for real-time
  mixed-integer nonlinear mpc.
\newblock In {\em 2021 60th IEEE Conference on Decision and Control (CDC)},
  pages 993--999. IEEE, 2021.

\bibitem{rawlings2006particle}
James~B Rawlings and Bhavik~R Bakshi.
\newblock Particle filtering and moving horizon estimation.
\newblock {\em Computers \& chemical engineering}, 30(10-12):1529--1541, 2006.

\bibitem{Regis2007}
Rommel~G. Regis and Christine~A. Shoemaker.
\newblock A stochastic radial basis function method for the global optimization
  of expensive functions.
\newblock {\em INFORMS Journal on Computing}, 19(4):497–509, November 2007.

\bibitem{rosseel2012optimal}
Eveline Rosseel and Garth~N Wells.
\newblock Optimal control with stochastic pde constraints and uncertain
  controls.
\newblock {\em Computer Methods in Applied Mechanics and Engineering},
  213:152--167, 2012.

\bibitem{RussellNorvig2010}
Stuart Russell and Peter Norvig.
\newblock {\em Artificial Intelligence: A Modern Approach}.
\newblock Prentice Hall, 3rd edition, 2010.

\bibitem{sager2005}
Sebastian Sager.
\newblock {\em Numerical methods for mixed-integer optimal control problems}.
\newblock Der Andere Verlag T{\"o}nning, 2005.

\bibitem{santosuosso2025distributed}
Luca Santosuosso and Sonja Wogrin.
\newblock Distributed stochastic model predictive control with temporal
  aggregation for the joint dispatch of cascaded hydropower and renewables.
\newblock {\em arXiv preprint arXiv:2510.11998}, 2025.

\bibitem{schiela2009barrier}
Anton Schiela.
\newblock Barrier methods for optimal control problems with state constraints.
\newblock {\em SIAM Journal on Optimization}, 20(2):1002--1031, 2009.

\bibitem{schwedes2017mesh}
Tobias Schwedes, David~A Ham, Simon~W Funke, and Matthew~D Piggott.
\newblock {Mesh dependence in PDE-constrained optimisation}.
\newblock In {\em Mesh Dependence in PDE-Constrained Optimisation: An
  Application in Tidal Turbine Array Layouts}, pages 53--78. Springer, 2017.

\bibitem{shapiro2020periodical}
Alexander Shapiro and Lingquan Ding.
\newblock Periodical multistage stochastic programs.
\newblock {\em SIAM Journal on Optimization}, 30(3):2083--2102, 2020.

\bibitem{shapiro2016time}
Alexander Shapiro and Alois Pichler.
\newblock Time and dynamic consistency of risk averse stochastic programs.
\newblock {\em Optimization Online}, 2016.

\bibitem{soleimani2009optimality}
M~Soleimani-Damaneh.
\newblock Optimality and invexity in optimization problems in banach algebras
  (spaces).
\newblock {\em Nonlinear Analysis: Theory, Methods \& Applications},
  71(11):5522--5530, 2009.

\bibitem{talaei2015adaptive}
B~Talaei, S~Jagannathan, and J~Singler.
\newblock Adaptive dynamic programming boundary control of uncertain coupled
  semi-linear parabolic pde.
\newblock In {\em 2015 IEEE International Symposium on Intelligent Control
  (ISIC)}, pages 918--923. IEEE, 2015.

\bibitem{talaei2016boundary}
Behzad Talaei, Sarangapani Jagannathan, and J~Singler.
\newblock Boundary control of two dimensional burgers pde using approximate
  dynamic programming.
\newblock In {\em 2016 American Control Conference (ACC)}, pages 5243--5248.
  IEEE, 2016.

\bibitem{talaei2017boundary}
Behzad Talaei, Sarangapani Jagannathan, and John Singler.
\newblock Boundary control of linear uncertain 1-d parabolic pde using
  approximate dynamic programming.
\newblock {\em IEEE Transactions on Neural Networks and Learning Systems},
  29(4):1213--1225, 2018.

\bibitem{Talbi2009}
El-Ghazali Talbi.
\newblock {\em Metaheuristics: From Design to Implementation}.
\newblock Wiley, 2009.

\bibitem{Thuenen2022}
Anna Th{\"u}nen, Sven Leyffer, and Sebastian Sager.
\newblock State elimination for mixed-integer optimal control of partial
  differential equations by semigroup theory.
\newblock {\em Optimal Control Applications and Methods}, 43(4):867--883, 2022.

\bibitem{Torczon1997}
Virginia Torczon.
\newblock On the convergence of pattern search algorithms.
\newblock {\em SIAM Journal on Optimization}, 7(1):1–25, February 1997.

\bibitem{toro2009}
Eleuterio~F. Toro.
\newblock {\em {Riemann Solvers and Numerical Methods for Fluid Dynamics: A
  Practical Introduction}}.
\newblock Springer-Verlag, 3rd edition, 2009.

\bibitem{ulbrichpdeopthyper}
Stefan Ulbrich.
\newblock A sensitivity and adjoint calculus for discontinuous solutions of
  hyperbolic conservation laws with source terms.
\newblock {\em SIAM journal on control and optimization}, 41(3):740--797, 2002.

\bibitem{inbookUlbrich2007}
Stefan Ulbrich.
\newblock {\em 7. Generalized SQP Methods with “Parareal” Time-Domain
  Decomposition for Time-Dependent PDE-Constrained Optimization}, page
  145–168.
\newblock Society for Industrial and Applied Mathematics, January 2007.

\bibitem{ulbrich2007generalized}
Stefan Ulbrich.
\newblock {Generalized SQP methods with “parareal” time-domain
  decomposition for time-dependent PDE-constrained optimization}.
\newblock In {\em Real-time PDE-constrained optimization}, pages 145--168.
  SIAM, 2007.

\bibitem{wang2021fast}
Sifan Wang, Mohamed~Aziz Bhouri, and Paris Perdikaris.
\newblock {Fast PDE-constrained optimization via self-supervised operator
  learning}.
\newblock {\em arXiv preprint arXiv:2110.13297}, 2021.

\bibitem{wei2013depth}
Zhangping Wei and Yafei Jia.
\newblock A depth-integrated non-hydrostatic finite element model for wave
  propagation.
\newblock {\em International Journal for Numerical Methods in Fluids},
  73(11):976--1000, 2013.

\bibitem{willmann2010}
Maximilian Willmann, Jesus Carrera, and Xavier S{\'a}nchez-Vila.
\newblock {Model Reduction for Groundwater Flow and Transport}.
\newblock {\em Advances in Water Resources}, 33(11):135--144, 2010.

\bibitem{zeidler1986nonlinear}
Eberhard Zeidler.
\newblock {\em Nonlinear Functional Analysis and It's Applications: Fixed-point
  theorems}.
\newblock Springer-Verlag, 1986.

\bibitem{zeile2021combinatorial}
Clemens~Ulrich Zeile.
\newblock {\em Combinatorial integral decompositions for mixed-integer optimal
  control}.
\newblock PhD thesis, 2021.

\bibitem{zheng2013evaluation}
Ying Zheng, Xudong Fu, and Jiahua Wei.
\newblock Evaluation of power generation efficiency of cascade hydropower
  plants: a case study.
\newblock {\em Energies}, 6(2):1165--1177, 2013.

\end{thebibliography}

\end{document}